\numberwithin{equation}{section}
\newcommand{\be}{\begin{equation}}
\newcommand{\ee}{\end{equation}}
\newcommand{\R}{\mathbb R}
\newcommand{\Z}{\mathbb Z}
\newcommand{\N}{\mathbb N}
\newcommand{\M}{{\mathcal M}}
\newcommand{\ep}{\varepsilon}
\renewcommand{\phi}{\varphi}
\newcommand{\pd}{\partial}
\newcommand{\co}{\colon}
\newcommand{\la}{\lambda}
\DeclareMathOperator{\grad}{grad}
\DeclareMathOperator{\trace}{trace}
\DeclareMathOperator{\vol}{vol}
\DeclareMathOperator{\diam}{diam}
\newtheorem{theorem}{Theorem}
\newtheorem{lemma}{Lemma}[section]
\newtheorem{proposition}[lemma]{Proposition}
\theoremstyle{remark}
\newtheorem{remark}[lemma]{Remark}
\newtheorem*{remark*}{Remark}
\theoremstyle{definition}
\newtheorem{definition}[lemma]{Definition}
\begin{document}

\title{A graph discretization of the Laplace-Beltrami operator}

\author{Dmitri Burago}                                                          
\address{Dmitri Burago: Pennsylvania State University,                          
Department of Mathematics, University Park, PA 16802, USA}                      
\email{burago@math.psu.edu}                                                     
                                                                                
\author{Sergei Ivanov}
\address{Sergei Ivanov:
St.Petersburg Department of Steklov Mathematical Institute,
Russian Academy of Sciences,
Fontanka 27, St.Petersburg 191023, Russia}
\email{svivanov@pdmi.ras.ru}

\author{Yaroslav Kurylev}                                                          
\address{Yaroslav Kurylev:
Department of Mathematics, University College London, Gower Street,
London, WC1E 6BT, UK}                      
\email{y.kurylev@math.ucl.ac.uk}        

\thanks{The first author was partially supported
by NSF grant DMS-1205597.
The second author was partially supported by
RFBR grant 11-01-00302-a.
The third author was partially supported by 
EPSRC}

\subjclass[2010]{58J50, 58J60, 65N25, 53C21, 05C50}

\keywords{Laplace, graph, discretization, Riemannian}

\begin{abstract}
We show that eigenvalues and eigenfunctions of the Laplace--Beltrami operator
on a Riemannian manifold are approximated by eigenvalues
and eigenvectors of a (suitably weighted) graph Laplace operator of
a proximity graph on an epsilon-net.
\end{abstract}

\maketitle

\section{Introduction}
 
Traditionally, discretization in Riemannian geometry
is associated with triangulations and other polyhedral approximations.
This approach works perfectly well in dimension two
but meets a number of obstacles in higher dimensions.
It is now clear, due to works of Cheeger, Petrunin, Panov and others
(see \cite{Cheeger,Petrunin,Panov,Orshanskiy})
that in dimensions beyond three
polyhedral structures are too rigid to serve as discrete models
of Riemannian spaces with curvature bounds.
In some applications, we get a Riemannian manifold as
a cloud of points with approximate distances between them,
see e.g.\ \cite{KKL}, \cite{Belkin}.
It appears that point clouds arising as discretizations of Riemannian
manifolds can be effectively distinguished from arbitrary ones.
This issue will be addressed elsewhere.
For triangulations, even the problem of determining whether a given simplicial complex
is a topological manifold is algorithmically undecidable
(see e.g.\ \cite[\S6.2]{Poonen} for a simple proof).

In a few papers, we will try to discuss approximating Riemannian manifolds by graphs,
of course with additional structures and various boundedness conditions.
Here we show that the spectrum of a suitable graph
Laplacian gives a reasonable approximation
to the spectrum of the Riemannian Laplace--Beltrami operator.  
The key difference with finite element and similar methods 
(see e.g.\ \cite{Ciarlet}, \cite{DP} and an interesting recent work \cite{Aubry})
is that in our
construction the set of vertices is an arbitrary net as long as it is dense enough.  There are no 
local regularity constraints and we use only very rough data.

Let us note that we look at the problem from the viewpoint of spectral (Riemannian) geometry. 
On the other hand, similar problems of course have been receiving a lot of attention from 
numerical analysts. The most closely related formulations can be found in the above mentioned 
\cite{Belkin}, however it gives only a probabilistic result with no constructive suggestion
of how one decides which point clouds do the job. An ideologically close (but still rather different) 
approach can be found in \cite{BBJ} and references therein. 

We do not discuss numerical and computational aspects of our results.
In the level of justification, our proofs seem to be relatively technical. Still, it seems that practical implementation of 
computational methods behind our theorems should be a relatively easy task. We do not address 
this issue here but hope to do this elsewhere. Let us just mention that we start with an arbitrary approximation
of our Riemannian manifold by a finite metric-measure space. Then we associate to this approximation 
a (sparse) matrix in the most straightforward way. In particular, in 
Section \ref{sec:volume} we describe some way of assigning to a given $\ep$-net on a 
Riemannian manifold a 
proper graph approximation. Once the matrix is constructed,  its
eigenvalues turn out to be very good approximations to those of the Riemannian Laplacian.

Let $M^n$ be a compact Riemannian manifold (without boundary)
and $X\subset M$ a finite $\ep$-net.
The geodesic distance between $x,y\in M$ is denoted by $d(x,y)$
or simply $|xy|$.
Given such $X$ and $\rho>0$,
one constructs a \textit{proximity graph}
$\Gamma=\Gamma(X,\rho)$: the set of vertices of the graph is $X$,
and two vertices are connected by an edge if and only if $d(x,y)<\rho$.
In our set-up, we assume that $\ep\ll\rho$ and $\rho$ is sufficiently small
so that $\rho$-balls in $M$ are (bi-Lipschitz) close to Euclidean.
In addition, we assign weights to vertices and edges of $\Gamma$
as explained below.
Then there is a graph Laplacian operator associated with this structure,
see \eqref{26.7.7}.
Our goal is to approximate the eigenvalues $\la_k(M)$
the Laplace--Beltrami operator on~$M$
by eigenvalues $\la_k(\Gamma)$ of the
graph Laplacian.

This kind of problems were studied before.
Fujiwara \cite{Fu} showed that, if $X$
is an $\ep$-separated $\ep$-net and $\rho=5\ep$,
then the eigenvalues of (unweighted) graph Laplacian
of the proximity graph after proper normalization satisfy
$$
 C_n^{-1} \la_k(M) \le \la_k(\Gamma) \le C_n \la_k(M)
$$
where $C_n>0$ is a constant depending only on $n=\dim M$.
Belkin and Niyogi \cite{Belkin} considered random,
uniformly distributed nets in $M$ and showed that,
for a suitable choice of edge weights (depending on distances),
the spectrum of the resulting graph Laplacian converges
to the spectrum of~$M$ in the probability sense.

In this paper we present a construction that works for an
arbitrary net. The ``density'' of the net may vary 
from one region to another. To compensate for this,
we need to introduce weights on vertices.
These weights determine a discrete measure
on $X$ and we essentially
require that $X$ approximates $M$ as a metric measure space.

\subsection*{The construction}
Let $\ep>0$ and $X=\{x_i\}_{i=1}^N$ be a finite $\ep$-net in $M$.
We denote by $B_r(x)$ the closed metric ball of radius $r$ centered at $x\in M$.
We assume that $X$ is equipped with
a discrete measure $\mu=\sum\mu_i\delta_{x_i}$
which approximates the volume of $M$ in the following sense.

\begin{definition}
\label{d:vol-approx}
A measure $\mu$ on $X$ is an \textit{$\ep$-approximation} of volume $\vol$ on $M$
if there exist a partition of $M$ into measurable subsets $V_i$, $i=1,\dots,N$,
such that $V_i\subset B_\ep(x_i)$ and $\vol(V_i)=\mu_i$ for every $i$.

In this case we also say that the pair $(X,\mu)$ \textit{$\ep$-approximates}
$(M,\vol)$.
\end{definition}

Every $\ep$-net $X$ in $M$ can be equipped with such a measure.
For example, let $\{V_i\}$ be the Voronoi decomposition of $M$
with respect to $X$ and $\mu_i=\vol(V_i)$.
We discuss other constructions and some properties
of Definition~\ref{d:vol-approx} in Section~\ref{sec:volume}.
In particular, we show that this definition is naturally related
to weak convergence of measures (see Remark \ref{r:prokhorov}).

Consider the space $L^2(X)=L^2(X,\mu)$, that is
the $N$-dimensional space of functions from $X$ to $\R$
equipped with the following inner product:
\be \label{31.7.3}
 \langle u,v\rangle=\langle u,v\rangle_{L^2(X)} = \sum \mu_i u(x_i)v(x_i) ,
\ee
or, equivalently, with a Euclidean norm given by
\be\label{e:L2Xnorm}
 \|u\|^2 = \|u\|^2_{L^2(X)} = \sum \mu_i |u(x_i)|^2 .
\ee
We think of $L^2(X)$ as a finite-dimensional approximation
to $L^2(M)$. For the sake of brevity,
we omit the index $L^2(X)$ in most formulae 
in the paper.

We define the following weighted graph $\Gamma=\Gamma(X,\mu,\rho)$.
The set of vertices is $X$, two vertices 
$x,y\in X$ are connected by an edge if and only if $d(x,y)<\rho$.
We write $x\sim y$ for $x,y\in X$ if
they are connected by an edge. 
Both vertices and edges are weighted.
The weight of a vertex $x_i$ is $\mu_i$.
To an edge $e_{ij}=(x_i,x_j)$ we associate a weight
$w(e_{ij})=w_{ij}$ given by
$$
 w_{ij} = \frac{2(n+2)}{\nu_n\rho^{n+2}} \mu_i\mu_j 
$$
where $\nu_n$ is the volume of the unit ball in $\R^n$.
Note that $w_{ij}=w_{ji}$.

We approximate the Riemannian Laplace--Beltrami operator $\Delta=\Delta_M$
by the weighted graph Laplacian $\Delta_\Gamma:L^2(X)\to L^2(X)$
defined by
\be \label{26.7.7}
\begin{aligned}
 (\Delta_\Gamma u)(x_i) &= \frac1{\mu_i} \sum_{j:x_j\sim x_i} w_{ij} (u(x_j)-u(x_i)) \\
 &=\frac{2(n+2)}{\nu_n\rho^{n+2}}\sum_{j:x_j\sim x_i} \mu_j (u(x_j)-u(x_i)) .
\end{aligned}
\ee
The motivation behind this formula is the following.
If $u$ is a discretization of a smooth function $f\co M\to\R$, then the latter sum is
the discretization of an integral over the ball $B_{\rho}(x_i)$:
$$
\sum_{j:x_j\sim x_i} \mu_j (u(x_j)-u(x_i)) \approx \int_{B_\rho(x_i)} (f(x)-f(x_i)) \,dx ,
$$
and the normalization constant is chosen so that the normalized
integral approaches $\Delta f(x_i)$ as $\rho\to 0$, see Section \ref{sec:integral}.
It follows that the graph Laplacian of the discretization of $f$
approximates $\Delta f$ if $\ep\ll\rho\ll 1$.

\begin{remark}
\label{r:edge-weights}
One can introduce weights on edges depending on their lengths.
For example, the above value of $w_{ij}$ could be multiplied by
$\varphi(\rho^{-1}d(x_i,x_j))$
where $\varphi$ is a nonnegative non-increasing function on $[0,1]$.
With a suitably adjusted normalization
constant, everything generalizes to this set-up in a straightforward way.
Probably a smart choice of $\varphi$ can allow one to improve the rates of 
convergence.
\end{remark}

The operator $\Delta_\Gamma$ is self-adjoint with respect to the
inner product \eqref{31.7.3} on $L^2(X)$ and nonpositive definite,
see Section \ref{sec:discrete-differential}.
Let $0=\la_1(\Gamma)\le\la_2(\Gamma)\le\dots\le\la_N(\Gamma)$
be the eigenvalues of $-\Delta_\Gamma$ and
$0=\la_1(M)\le\la_2(M)\le\dots$ the eigenvalues of $-\Delta_M$.

\subsection*{Statement of the results}
Let $\M=\M_n(K, D, i_0)$ be the class of $n$-dimensional Riemannian manifolds 
with absolute value of sectional curvature bounded by $K$, 
diameter bounded by $D$ and injectivity
radius bounded below by $i_0$.

Throughout the paper, we denote by $C$ various absolute constants
whose precise value may vary from one occurrence to another (even within one formula).
We write $C_n$, $C_\M$, etc, to denote constants depending only on 
the respective parameters.

In some of the arguments we denote by $\sigma,\sigma_1,\dots$,
various ``small'' quantities depending on $\ep$, $\rho$, etc.
These notations are local and redefined in each proof
where they are used.

The main result of the paper is the following

\begin{theorem} \label{main}
For every integer $n\ge 1$ there exist positive constants $C_n$ and $c_n$
such that the following holds.
Let $M\in \M=\M_n(K, D, i_0)$ and 
$\Gamma=\Gamma(X,\mu,\rho)$ be a weighted graph defined as above,
where $(X,\mu)$ $\ep$-approximates $(M,\vol)$,
$\rho<i_0/2$, $K\rho^2<c_n$ and
$\ep/\rho<\min\{1/n,1/3\}$.

Then for every $k\in\Z_+$
such that $\rho\la_k(M)<c_n$
one has
\be \label{main_th}
|\la_k(\Gamma)-\la_k(M)| \le C_n (\ep/\rho+K\rho^2)\la_k(M)+C_n\rho\la_k(M)^{3/2} .
\ee
Therefore
$$
 |\la_k(\Gamma)-\la_k(M)| \le C_{\M,k}(\ep/\rho+\rho)
$$
provided that $\rho<C_{\M,k}^{-1}$.

As a corollary, for every fixed $k$ we have $\la_k(\Gamma)\to\la_k(M)$
as $\rho\to 0$ and $\frac\ep\rho\to 0$ and the convergence is
uniform over all $M\in\M$.
\end{theorem}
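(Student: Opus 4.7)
My plan proceeds by the Courant--Fischer min-max characterization
\[
 \la_k = \min_{\dim V=k} \max_{0\ne u\in V} \frac{\mathcal{E}(u,u)}{\|u\|^2},
\]
applied on both sides, with the quadratic form $\mathcal{E}_M(f)=\int_M|\grad f|^2$ on $L^2(M)$ and $\mathcal{E}_\Gamma(u)=\langle -\Delta_\Gamma u,u\rangle$ on $L^2(X)$. To use min-max in both directions I would build two maps: a \emph{discretization} $P\co C^\infty(M)\to L^2(X)$ given by $Pf=f|_X$, and a \emph{smoothing/extension} $Q\co L^2(X)\to H^1(M)$ obtained, for instance, by convolving the piecewise-constant lift $\sum u(x_i)\chi_{V_i}$ with a smooth radial kernel of scale $\rho$. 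The whole proof then reduces to showing that $P$ and $Q$ almost preserve $L^2$-norms and almost preserve the Dirichlet forms, with errors matching the right-hand side of \eqref{main_th}.

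For the upper bound on $\la_k(\Gamma)$, Definition~\ref{d:vol-approx} immediately gives $\bigl|\|Pf\|_{L^2(X)}^2-\|f\|_{L^2(M)}^2\bigr|\le C\ep\Lip(f)\|f\|_\infty$. For the form, I would expand
$
\mathcal{E}_\Gamma(Pf)=\frac{n+2}{\nu_n\rho^{n+2}}\sum_{i\sim j}\mu_i\mu_j(f(x_j)-f(x_i))^2,
$
use $\mu_j=\vol(V_j)$ with $V_j\subset B_\ep(x_j)$ to pass to an integral over $B_\rho(x_i)$, and invoke the Euclidean identity
\[
 \frac{n+2}{\nu_n\rho^{n+2}}\int_{B_\rho(0)}(f(y)-f(0))^2\,dy = |\grad f(0)|^2+O(\rho^2\|f\|_{C^3}^2),
\]
together with $O(K\rho^2)$ distortion between a geodesic $\rho$-ball and the Euclidean model (permissible since $\rho<i_0/2$ and $K\rho^2<c_n$). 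Applied to $f=\phi_k$, the eigenfunction bounds $\Lip(\phi_k)\lesssim\sqrt{\la_k}$ and $\|\phi_k\|_{C^3}\lesssim(1+\la_k)^{3/2}$ from elliptic regularity on $\M$ turn this into a $(1+C_n(\ep/\rho+K\rho^2))$ relative error plus an absolute $C_n\rho\la_k^{3/2}$ error. Courant--Fischer applied to the subspace $P(\mathrm{span}\{\phi_1,\dots,\phi_k\})\subset L^2(X)$ then yields the upper bound in \eqref{main_th}.

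For the lower bound on $\la_k(\Gamma)$, I would apply min-max on $M$ to the $k$-dimensional space $Q(W)\subset H^1(M)$, where $W\subset L^2(X)$ is spanned by the first $k$ graph eigenvectors. The two estimates to prove are $\|Qu\|_{L^2(M)}^2=(1+o(1))\|u\|_{L^2(X)}^2$ and, crucially,
$
\int_M|\grad Qu|^2\le(1+C_n(\ep/\rho+K\rho^2))\,\mathcal{E}_\Gamma(u,u)+C_n\rho\la_k(M)\|u\|^2,
$
the latter being the main technical obstacle. The key point is to choose the smoothing kernel so that $|\grad Qu(x)|^2$ is pointwise bounded by an average of squared discrete differences $(u(x_i)-u(x_j))^2$ with $d(x_i,x_j)\lesssim\rho$; integrating over $M$ regroups these differences into the graph energy with exactly the normalization $\frac{n+2}{\nu_n\rho^{n+2}}$ built into $w_{ij}$. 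Here curvature control enters once more through the exponential chart on each $\rho$-ball.

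The hard step is the second inequality in the smoothing direction: the graph eigenvectors have no pointwise regularity, so the familiar Taylor-expansion route used for $P$ is unavailable, and one must implement a genuinely combinatorial comparison between gradients of the smoothed function and discrete differences of $u$. Once that inequality is in place, the remaining ingredients—orthogonality of the image $Q(W)$ up to $o(1)$, the $L^2$ comparison, and the Courant--Fischer argument—are routine, and combining the two directions gives \eqref{main_th}. The corollary $\la_k(\Gamma)\to\la_k(M)$ follows by fixing $k$, using $\la_k(M)\le C_{\M,k}$ from Weyl's law on $\M$, and sending $\ep/\rho\to 0$ and $\rho\to 0$.
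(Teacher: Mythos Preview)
Your overall two-way min-max strategy with a discretization map and a smoothing/interpolation map is exactly the paper's approach, and your lower-bound map $Q$ is essentially the paper's interpolation $I=\Lambda_{\rho-2\ep}P^*$ (the paper uses the specific kernel $\psi(t)=\frac{n+2}{2\nu_n}(1-t^2)_+$, chosen so that the Cauchy--Schwarz step in the gradient estimate reproduces the exact normalizing constant $\tfrac{n+2}{\nu_n\rho^{n+2}}$; see Lemma~\ref{l:dLambdaf}). You also correctly identify the genuinely hard step.

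There is, however, a real gap in your upper-bound route for the theorem \emph{as stated}. Pointwise restriction $Pf=f|_X$ together with Taylor expansion forces you to control $\|\phi_k\|_\infty$ and $\|\phi_k\|_{C^3}$. On a general $M\in\M$ these scale like $\la_k^{n/4}$ and $\la_k^{3/2+n/4}$, not like $1$ and $\la_k^{3/2}$; so your $L^2$-norm error $C\ep\Lip(\phi_k)\|\phi_k\|_\infty$ and your energy remainder $O(\rho^2\|\phi_k\|_{C^3}^2)$ carry extra powers of $\la_k$, and you cannot reach \eqref{main_th} with a constant $C_n$ depending only on $n$. (If instead you quote $\|\phi_k\|_{C^3_*}\le C_{\M,k}$, you get the weaker second inequality but not the first.) The paper flags exactly this: the restriction-plus-regularity argument is sketched in the introduction as the ``nearly trivial'' upper bound, but for the sharp statement it defines $Pf(x_i)=\mu_i^{-1}\int_{V_i}f$ (Definition~\ref{P}) and compares energies through the intermediate quantity
\[
E_r(f)=\int_M\int_{B_r(x)}|f(y)-f(x)|^2\,dy\,dx,
\]
which is bounded directly by $\|df\|_{L^2}^2$ via the geodesic-flow/Liouville argument of Lemma~\ref{l:Er-df-L2}, with no pointwise regularity of $f$ required. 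A smaller point: the term $C_n\rho\la_k^{3/2}$ in \eqref{main_th} does not come from a Taylor remainder in the energy comparison; it arises from the $L^2$-norm defect $\bigl|\,\|Iu\|_{L^2}-\|u\|\,\bigr|\le C\rho\,\|\delta u\|$ in the lower-bound direction (Lemma~\ref{l:interpolation}(1)), which injects a $\rho\sqrt{\la_k}$ relative error into the Rayleigh quotient.
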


The estimate \eqref{main_th} is a combination of Proposition \ref{p:upper-bound}
and Proposition \ref{p:lower-bound} where we prove
an upper and a lower bound, respectively,
for $\la_k(\Gamma)$ in terms of $\lambda_k(M)$.
These propositions also provide somewhat sharper estimates
on $\la_k(\Gamma)-\la_k(M)$.
The second assertion of Theorem \ref{main} follows from \eqref{main_th}
and the fact that for every fixed $k$ the eigenvalue $\la_k(M)$
is uniformly bounded over $M\in\M$.

Our next result establishes convergence of eigenfunctions.
Namely, it is possible to approximate an eigenfunction of $\Delta_M$ 
corresponding to an eigenvalue $\la$ by a 
linear combination of eigenfunctions of $\Delta_\Gamma$
corresponding to eigenvalues close to~$\la$.
The precise formulations are given in Theorems \ref{t:eigenspace}
and~\ref{t:eigenfunction} in Section \ref{sec:eigenfunction}.
Here we give only a special case of this result
where $\lambda$ has multiplicity~$1$.

\begin{theorem} \label{eigenfunction_1}
Let $f_k$ be a unit-norm eigenfunction of $-\Delta_M$
corresponding to an eigenvalue $\la_k=\la_k(M)$ of multiplicity~$1$,
and let $\delta_\la=\min\{1,\la_{k+1}-\la_k,\la_k-\la_{k-1}\}$.
Then, for sufficiently small $\rho$ and $\ep/\rho$
(more precisely, if $\rho+\ep/\rho< C_{\M,k}^{-1}\delta_\la$),
the eigenvalue $\la_k(\Gamma)$ of $-\Delta_\Gamma$
also has multiplicity~1, and for a corresponding
unit-norm eigenvector $u_k$ we have
$$
\begin{aligned}
\|P f_k-   u_k\|_{L^2(X)}
&\le C_{\M,k}\delta_\la^{-1}(\ep/\rho+\rho),
\\
\|Iu_k-  f_k\|_{L^2(M)}
&\le C_{\M,k}\delta_\la^{-1}(\ep/\rho+\rho),
\end{aligned}
$$
where the norm $\|\cdot\|_{L^2(X)}$ is defined in \eqref{e:L2Xnorm}
and the maps $P\co L^2(M)\to L^2(X)$ and $I\co L^2(X)\to C^{0,1}(M)$
are discretization and  interpolation 
defined in Definitions \ref{P} and~\ref{I},
respectively.
\end{theorem}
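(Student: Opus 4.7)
The plan is to run a standard finite-dimensional spectral perturbation argument: realize $Pf_k$ as an approximate unit eigenvector of $-\Delta_\Gamma$ near $\la_k$, use the spectral gap $\delta_\la$ to pin down the true eigenvector $u_k$, and then run the symmetric argument with the interpolation $I$ for the $L^2(M)$ estimate. Simplicity of $\la_k(\Gamma)$ follows immediately from Theorem \ref{main}: applying it at indices $k-1,k,k+1$ gives $|\la_j(\Gamma)-\la_j(M)|\le C_{\M,k}(\ep/\rho+\rho)$, which under the hypothesis $\rho+\ep/\rho<C_{\M,k}^{-1}\delta_\la$ is at most $\delta_\la/4$. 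Hence $\la_{k\pm1}(\Gamma)$ stays at distance $\ge\tfrac34\delta_\la$ from $\la_k(M)$ whereas $\la_k(\Gamma)$ stays within $\delta_\la/4$, so $\la_k(\Gamma)$ is simple and every other eigenvalue of $-\Delta_\Gamma$ is separated from $\la_k$ by at least $\delta_\la/2$.

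The central step is a quantitative consistency estimate
$$
\|(-\Delta_\Gamma-\la_k)Pf_k\|_{L^2(X)}\le C_{\M,k}(\ep/\rho+\rho),\qquad \|Pf_k\|_{L^2(X)}=1+o(1).
$$
By the heuristic after \eqref{26.7.7}, $(\Delta_\Gamma Pf_k)(x_i)$ is, up to an $\ep$-Riemann-sum error controlled by the definition of an $\ep$-approximation, equal to the normalized ball integral $\frac{2(n+2)}{\nu_n\rho^{n+2}}\int_{B_\rho(x_i)}(f_k(y)-f_k(x_i))\,dy$, which by a Taylor expansion on a normal ball of radius $\rho<i_0/2$ agrees with $\Delta_M f_k(x_i)=-\la_k f_k(x_i)$ up to a pointwise error of order $K\rho^2\la_k+\rho\la_k^{3/2}$ after using elliptic bounds on the derivatives of $f_k$. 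Squaring, multiplying by $\mu_i$ and summing yields the displayed $L^2(X)$ bound. This is essentially the content of Proposition \ref{p:upper-bound}, only now needed in $L^2$-norm rather than only on the quadratic form.

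Granting the consistency estimate, expand $Pf_k=\sum_j c_j u_j$ in the orthonormal eigenbasis of $-\Delta_\Gamma$. Parseval gives
$$
\sum_j(\la_j(\Gamma)-\la_k)^2 c_j^2=\|(-\Delta_\Gamma-\la_k)Pf_k\|_{L^2(X)}^2\le C_{\M,k}^2(\ep/\rho+\rho)^2,
$$
and since $|\la_j(\Gamma)-\la_k|\ge\delta_\la/2$ for $j\ne k$ by the first step, $\sum_{j\ne k}c_j^2\le C_{\M,k}\delta_\la^{-2}(\ep/\rho+\rho)^2$. After fixing the sign of $u_k$ so that $c_k>0$, the first inequality of the theorem follows. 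For the second inequality I repeat the argument with the roles reversed: establish a dual consistency
$$
\bigl|\langle\Delta_M Iu_k+\la_k Iu_k,\phi\rangle_{L^2(M)}\bigr|\le C_{\M,k}(\ep/\rho+\rho)\|\phi\|_{H^1(M)}
$$
interpreted via the Dirichlet form (since $Iu_k$ is only Lipschitz), decompose $Iu_k$ in the $\Delta_M$-eigenbasis and apply the same gap lemma in $L^2(M)$. The normalization $\|Iu_k\|_{L^2(M)}=1+o(1)$ should follow from $IP\approx\mathrm{id}$ on smooth functions together with the already-proved $Pf_k\approx u_k$.

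The main obstacle is producing these consistency estimates in $L^2$-norm rather than in the weaker bilinear-form version that suffices for eigenvalue comparison in Theorem \ref{main}. This requires honestly controlling the replacement of the continuum ball integral $\int_{B_\rho(x_i)}(f_k(y)-f_k(x_i))\,dy$ by its $\mu$-weighted sum over $\{x_j\}\cap B_\rho(x_i)$, and it is here that the stronger hypothesis that $(X,\mu)$ is an $\ep$-approximation of $(M,\vol)$, rather than merely a metric $\ep$-net, genuinely enters via the $\ep/\rho$ term. Once Step 2 and its dual are in place, the rest is a routine finite-dimensional gap-of-spectrum lemma for a self-adjoint operator.
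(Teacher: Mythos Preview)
Your opening paragraph (simplicity of $\la_k(\Gamma)$ via Theorem~\ref{main}) and your spectral-gap expansion $Pf_k=\sum_j c_j u_j$ are correct and standard.  The gap is exactly where you flag it: the consistency estimate
\[
\|(-\Delta_\Gamma-\la_k)Pf_k\|_{L^2(X)}\le C_{\M,k}(\ep/\rho+\rho)
\]
cannot be obtained by the pointwise Taylor-plus-Riemann-sum analysis you sketch.  Replacing the ball integral $\int_{B_\rho(x_i)}(f_k(y)-f_k(x_i))\,dy$ by the weighted sum over $x_j\sim x_i$ introduces an error supported on the shell $B_{\rho+2\ep}(x_i)\setminus B_{\rho-2\ep}(x_i)$, on which the \emph{linear} part of $f_k$ (of size $\rho\|df_k\|_\infty$) does not cancel; this contributes $O(\ep\rho^{n}\|df_k\|_\infty)$ to the unnormalized sum, hence $O(\ep/\rho^2)$ after multiplying by $\frac{2(n+2)}{\nu_n\rho^{n+2}}$.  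The paper says this explicitly at the end of Section~\ref{sec:integral}: the pointwise Laplacian approximation converges only as $\rho+\ep/\rho^2\to 0$, and ``our arguments are based on the discrete Dirichlet energy and the minimax principle, these tools provide better estimates.''  So your route would at best yield $C_{\M,k}\delta_\la^{-1}(\ep/\rho^2+\rho)$, which is strictly weaker than the stated theorem.  The dual direction is in worse shape still, since $Iu_k$ is merely Lipschitz and admits no second-order Taylor expansion.

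The paper avoids ever estimating $(-\Delta_\Gamma-\la_k)Pf_k$ in $L^2$.  It works only with first-order quantities: Lemmas~\ref{l:P-properties} and~\ref{l:interpolation} say that $P$ and $I$ almost preserve both the $L^2$ norm and the Dirichlet energy (with relative error $O(K\rho^2+\ep/\rho)$), and Lemma~\ref{l:inverse} says $IP$ and $PI$ are close to identities.  From these, Lemma~\ref{l:inverse-strong} gives a matching lower bound $\|\delta(Pf)\|\ge(1-\sigma)\|df\|_{L^2}$ on $H_\la(M)$.  The eigenvector estimate is then obtained by comparing the discrete Dirichlet form $Q$ with the truncated form $Q'(u)=Q(\mathbb P_{\la+a}u)+\la\|u-\mathbb P_{\la+a}u\|^2$ on the $k$-dimensional image $L=P(\operatorname{span}\{f_1,\dots,f_k\})$: the eigenvalues of $Q|_L$ and $Q'|_L$ differ by $O(\ep/\rho+\rho)$ thanks to Theorem~\ref{main}, and the elementary Lemma~\ref{l:supQ-Q'} converts this into a uniform bound on $Q-Q'$, which directly controls the high-frequency part of $Pf_k$ (Lemma~\ref{l:highfreq}).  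Lemma~\ref{l:eigenspace} then handles the low-frequency part using the lower Dirichlet bound.  The point is that the Dirichlet energy is a first-order object, so the shell error enters only as a relative factor $(1+2\ep/\rho)^{n+2}$ rather than the $\ep/\rho^2$ you incur at the Laplacian level.
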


Theorem \ref{eigenfunction_1} is a special case of Theorem \ref{t:eigenfunction},
which handles arbitrary multiplicity.
Theorem \ref{t:eigenspace} is another variant where an estimate is uniform
over $\M$ (in particular, it does not depend on the size of spectral gaps).
However the rate of convergence guaranteed by Theorem~\ref{t:eigenspace} is
not as good as in Theorem~\ref{t:eigenfunction}.

\subsection*{Remarks on the proof}
Let us note that the upper bound
\be\label{e:upper-bound-imprecise}
\limsup \la_k(\Gamma)\le \la_k(M)
\ee
is nearly trivial.
It follows from the fact that our graph Laplacian
approximates the function Laplacian for every smooth function.
Indeed, let $f_1,\dots,f_k$ be orthonormal eigenfunctions
of $-\Delta_M$ with eigenvalues $\la_1(M),\dots,\la_k(M)$.
It is well known that the eigenfunctions are smooth
(more precisely, their $C^3_*$ norms are bounded by
$C_{\M,k}$, see \cite{AKKLT}).
Let $u_1,\dots,u_k\in L^2(X)$ be discretizations of $f_1,\dots,f_k$.
(For smooth functions the precise definition of discretization
does not really matter; one can define e.g.\ $u_j(x_i)=f_j(x_i)$.)
Since the functions $f_j$ are smooth, their discrete functions $u_j$
associated to them are almost orthonormal in $L^2(X)$ and their discrete Laplacians
$\Delta_\Gamma(u_j)$ are pointwise close to the Laplacians $\Delta_M f_j$.
Hence
$$
 \langle -\Delta_\Gamma u_j,u_j \rangle
 \approx \langle -\Delta_M f_j,f_j \rangle_{L^2} = \la_j(M)
$$
and therefore
$
 \langle -\Delta_\Gamma u,u \rangle \lesssim \la_k(M)
$
for every $u$ from the linear span of $u_1,\dots,u_k$.
Thus we have a $k$-dimensional subspace of $L^2(X)$ where
the norm of the discrete Diriclet energy functional \eqref{e:energy}
is bounded by approximately $\la_k(M)$.
By the minimax principle it follows that $\la_k(\Gamma)\lesssim \la_k(M)$,
in other words, \eqref{e:upper-bound-imprecise} holds.

The proof of the upper bound  in Sections \ref{sec:estimates}
and \ref{sec:discretization} is different.
We define a discretization map $P\co L^2(M)\to L^2(X)$
that makes sense for non-smooth functions
and show that this map almost preserves the $L^2$ norm
and almost does not increase the Diriclet energy,
on a bounded energy level (see Definition \ref{P} and Lemma \ref{l:P-properties}).
This argument does not require pointwise eigenfunction estimates
and yields sharper inequalities.

The lower bound (i.e., the inequality $\liminf \la_k(\Gamma)\ge \la_k(M)$)
is more delicate.
Here good approximation of Laplacians of smooth functions is not sufficient.
For example, consider a disjoint union $\Gamma$ of two graphs $\Gamma_1$
and $\Gamma_2$ each of
which provides a good approximation of the function Laplacian.
The graph Laplacian $\Delta_\Gamma$ approximates the function Laplacian
as well as $\Delta_{\Gamma_1}$ and $\Delta_{\Gamma_2}$ do,
but the spectrum is different: every eigenvalue appears twice.

To prove the upper bound, we construct
a map $I\co L^2(X)\to C^{0,1}(M)$,
called the interpolation map,
with properties similar to those of the discretization map~$P$,
see Definition~\ref{I} and Lemma~\ref{l:interpolation}.
This map is essentially a convolution with a certain kernel
(the form of the kernel is essential for the estimate
in Lemma~\ref{l:interpolation}(2)).
With this map, the proof of the lower bound is similar
to that of the upper bound.

In addition, the maps $P$ and $I$ are almost inverse
to each other on bounded energy levels (Lemma \ref{l:inverse}).
These properties of $P$ and $I$ imply our eigenfunction estimates
(Theorems  \ref{eigenfunction_1}, \ref{t:eigenspace} and~\ref{t:eigenfunction})
by means of linear algebra arguments.

\begin{remark}
\label{r:stability}
The input data to the construction are $\rho>0$
and the finite metric measure space $(X,\mu)$.
One naturally asks how sensitive are the resulting
eigenvalues $\la_k(\Gamma)$ to ``measurement error''
in these data.
A small relative error in weights $\mu_i$ results
in a relative error of the same order in the $L^2(X)$
and the discrete Dirichlet energy \eqref{e:energy}
and hence to the eigenvalues.
A small (of order $\ep$) variation of distances in $X$
changes the set of edges of $\Gamma$: some edges of lengths
$\rho\pm\ep$ may be added or removed.
The discrete Dirichlet energy and hence the eigenvalues
are clearly monotone with respect to adding edges.
Therefore the eigenvalues are bounded above by those
of the proximity graph defined by the parameter $\rho+\ep$
in place of $\rho$, up to a factor $(1+\ep/\rho)^{n+2}$.
A similar argument yields a lower bound.
\end{remark}

\begin{remark}
The convergence of eigenvalues in Theorem \ref{main} is
uniform on a larger class of $n$-manifolds, namely those with
bounded Ricci curvature and diameter and injectivity radius separated from zero.
Indeed, by \cite{Anderson} this class is pre-compact in $C^{1,\alpha}$
(and hence Lipschitz) topology. This pre-compactness and convergence for
every individual manifold implies uniform convergence on the class.
This can be shown with an argument similar to one outlined
in Remark \ref{r:stability}.
\end{remark}

\begin{remark}
If the weights $\mu_i$ are constant
(i.e., $\mu_i=\mu_0:=\vol(M)/N$),
then the edge weights in our construction are also constant.
Hence the graph Laplacian given by \eqref{26.7.7} is the ordinary
(unweighted) graph Laplacian multiplied by a constant. Also note that in this case
the degree in the graph is almost constant (up to a small relative error):
the degree of every vertex approximately equals $\nu_n\rho^n/\mu_0$.

Unweighted graph Laplacians has been studied much more thoroughly
than weighted ones.
If necessary, one can make the weights constant (at the expense
of increasing the number of vertices)
as follows.
First approximate the weights $\mu_i$ by rational multiples of $\vol(M)$
and let $q$ be a common denominator of these rationals.
Then replace every point $x_i$ with weight $\mu_i=\frac{p_i}q \vol(M)$ by $p_i$
points (at almost the same location) with weights equal to $\vol(M)/q$.
The resulting metric measure space approximates $(M,\vol)$
as well as the original one do.
\end{remark}

\begin{remark}
Although our point is to avoid triangulation of a manifold,
let us mention that triangulation-based techniques
allow one to handle differential form Laplacians as well, see \cite{DP}.
It is interesting whether a suitable generalization of a graph Laplacian
can be used for this purpose too.
One can show that the spectrum of the differential form Laplacian
is continuous with respect to Gromov--Hausdorff topology on $\M$.
Hence a Gromov--Hausdorff approximation of a manifold
(such as an $\ep$-net) determines differential form Laplacian eigenvalues
up to a small error. However an explicit procedure of such determination
is yet to be found.
\end{remark}

\subsection*{Organization of the paper}
In Section \ref{sec:prelim} we collect various preliminaries.
In Section \ref{sec:estimates} we prove some technical results
about average dispersion  in $r$-balls of a function $f\in L^2(M)$.
This quantity, denoted by $E_r(f)$, is used throughout the paper
as an intermediate step between Dirichlet energy in $H^1(M)$
and its discretization.
In Section \ref{sec:discretization} we define the discretization map $P$
and prove an upper bound for the graph eigenvalues
(Proposition~\ref{p:upper-bound}).
Section~\ref{sec:smoothening} is devoted to properties of
a smoothening operator (the convolution with a special kernel)
used in the definition of the interpolation map~$I$.
The key result there is Lemma~\ref{l:dLambdaf}.
In Section~\ref{sec:interpol} we define $I$ and prove
a lower bound for the graph eigenvalues (Proposition~\ref{p:lower-bound}).
Proofs of the main results are contained in Section~\ref{sec:eigenfunction}.
In Section~\ref{sec:volume} (which is formally independent of the rest of the paper)
we discuss various aspects of volume approximation in the sense
of Definition~\ref{d:vol-approx}.

\medskip
\textit{Acknowledgement}.
This work began at Newton Institute, Cambridge,
where the second and third named authors
met at ``Inverse Problems'' scientific programme in September, 2011.
It was continued at Fields Institute, Toronto,
during ``Geometry in Inverse Problems'' program 
in April, 2012.
We are grateful to these institutions for
wonderful research environment they provided.

We are grateful to Yashar Memarian and Christian B\"ar
for bringing our attention
to some of the references,
and Dmitry Chelkak, Alexander Gaifullin, Alexander Nazarov, Mark Sapir
for fruitful discussions and helpful remarks.

\section{Preliminaries}
\label{sec:prelim}

\subsection{Discrete differential}
\label{sec:discrete-differential}
Let $E=E(\Gamma)$ be the set of directed edges of our graph.
(Each pair of adjacent vertices gives rise to two elements of $E$.)
Recall that every edge $e_{ij}=(x_i,x_j)$
is equipped with a weight $w(e_{ij})=w_{ij}$.
By $L^2(E)$ we denote the space
of real-valued functions on $E$ equipped with the following
inner product:
$$
 \langle \xi,\eta\rangle_{L^2(E)} = \frac12\sum_{e\in E} w(e) \xi(e)\eta(e).
$$
For a discrete function $u\colon X\to\R$ we define its
\emph{discrete differential} $\delta u\colon E\to\R$ by 
\begin{equation}
\label{by-parts}
 (\delta u)(e_{ij}) = u(x_j) - u(x_i) .
\end{equation}
The \emph{discrete Dirichlet energy} functional of $\Gamma$ is the quadratic
form
\be \label{31.7.2}
 u \mapsto \|\delta u\|_{L^2(E)}^2 = \langle \delta u,\delta u\rangle_{L^2(E)}
\ee
on $L^2(X)$.
A straightforward calculation shows that
\be \label{31.7.6}
 \langle \Delta_\Gamma u, v\rangle_{L^2(X)} = -\langle \delta u, \delta v\rangle_{L^2(E)},
\ee
in particular, $\langle \Delta_\Gamma u, v\rangle = \langle u, \Delta_\Gamma v\rangle$
and $\langle \Delta_\Gamma u, u\rangle = -\|\delta u\|^2$
for all $u,v\in L^2(X)$.
(Here and almost everywhere in the paper we omit indices $L^2(X)$ and $L^2(E)$.)
Thus $\Delta_\Gamma$ is self-adjoint and nonpositive on $L^2(X)$.

The above consideration does not depend on a particular choice of weights.
In our case, the discrete Dirichlet energy $\|\delta u\|=\|\delta u\|_{L^2(E)}$ is given by
\begin{equation}
\label{e:energy}
 \|\delta u\|^2
 = \frac{n+2}{\nu_n\rho^{n+2}} \sum_{i,j:x_j\sim x_i} \mu_i\mu_j |u(x_i)-u(x_j)|^2 .
\end{equation}

Since the operator $-\Delta_\Gamma$ is self-adjoint on $L^2(X)$
and the associated quadratic form is the discrete Dirichlet energy,
the minimax principle applies:
$$
 \la_k(\Gamma) = \min_L \max_{u\in L\setminus 0} \frac {\|\delta u\|^2_{L^2(E)}} {\|u\|^2_{L^2(X)}}
$$
where the minimum is taken over all $k$-dimensional subspaces $L\subset L^2(X)$.

\subsection{Local Riemannian geometry}
\label{sec:riem-prelim}
Throughout the paper, $M$ is a compact Riemannian
manifold (without boundary) and $n=\dim M$.
The absolute values of
sectional curvatures of $M$ are  bounded above by $K$
and the injectivity radius is bounded below by $i_0$.
Our standing assumptions are that $\rho<i_0/2$ and $K\rho^2<1/n^2$.

For $x\in M$, $\exp_x\colon T_xM\to M$ is the Riemannian exponential map.
We always restrict $\exp_x$ to the ball $B_{2\rho}(0)\subset T_xM$,
this restriction is a diffeomorphism onto the geodesic ball $B_{2\rho}(x)$
and hence its inverse $\exp_x^{-1}\colon B_{2\rho}(x)\to B_{2\rho}(0)$
is well-defined. We denote the Jacobian of $\exp_x$ at $v\in B_{2\rho}(0)\subset T_xM$
by $J_x(v)$.

By the Rauch Comparison Theorem, the relative distortion of metric by $\exp_x$
at $v\in B_{2\rho}(0)\subset T_xM$ is bounded by $O(K|v|^2)$ and hence
\be\label{e:rauch-jacobian}
  (1+CnK|v|^2)^{-1}\le J_x(v) \le 1+CnK|v|^2 .
\ee
It follows that
$\vol(B_r(x))\sim \nu_n r^n$ as $r\to 0$, more precisely,
$$
 \left|\vol(B_r(x))-\nu_n r^n\right| \le CnKr^{n+2}
$$
for all $r<2\rho$.

The inner product in $T_xM$ defined by the Riemannian structure
is denoted by~$\langle,\rangle$. 
This scalar product allows one to identify $T_xM$ and $T_x^*M$
and we sometimes assume this identification to simplify notation.
By $\grad f(x)$ we denote the Riemannian gradient
of a function $f\colon M\to\R$ at $x\in M$,
i.e., the vector in $T_xM$ corresponding to
the differential $d_xf\in T^*M$.
Recall that the gradient of the distance function $d(\cdot,y)$
at $x$ is the velocity vector at the endpoint
of the minimal geodesic from $y$ to $x$, that is,
\be
\label{e:derivative-of-distance}
 \grad d(\cdot,y)(x) = -\frac{\exp_x^{-1}(y)}{d(x,y)} .
\ee

\subsection{Integration over balls}
\label{sec:integral}

In this section we justify the normalization constant in \eqref{26.7.7}.
If $Q$ is a quadratic form on $\R^n$, then for every $r>0$ we have
\be\label{e:integralQ}
 \int_{B_r(0)} Q(x)\,dx = \frac{\nu_n r^{n+2}}{n+2} \trace(Q) .
\ee
Indeed, since both sides are preserved under orthogonal transformations
and linear in $Q$, one can replace $Q$ by its average
under the action of the orthogonal group.
Thus it suffices to verify \eqref{e:integralQ} only for rotation-invariant
quadratic forms, or, equivalently, for the form $Q(x)=|x|^2$.
For this form, one computes the integral using spherical coordinates:
$$
 \int_{B_r(0)} |x|^2\,dx = \int_0^r t^2 \vol_{n-1}(\pd B_t(0))\,dt = 
 \int_0^r n\nu_n t^{n+1}\,dt = \frac{n\nu_n r^{n+2}}{n+2} .
$$
The identity \eqref{e:integralQ} follows since $\trace(x\mapsto|x|^2)=n$.

Let $f\co\R^n\to\R$ be a smooth function. Integrating the Taylor expansion
of $f$ at $x_0\in\R^n$,
$$
 f(x)-f(x_0) = L(x-x_0)+Q(x-x_0) + o(|x-x_0|^2), \qquad |x-x_0|\to 0,
$$
where $L=d_{x_0}f$ and $Q=\frac12 d^2_{x_0}f$, using \eqref{e:integralQ}, yields
$$
 \int_{B_r(x_0)} (f(x)-f(x_0))\,dx 
 =\frac{\nu_n r^{n+2}}{n+2} \trace(Q) + o(r^{n+2}) \\
 =\frac{\nu_n r^{n+2}}{2(n+2)} \Delta f(x_0) + o(r^{n+2})
$$
as $r\to 0$.
For a smooth function $f\co M\to\R$ this relation holds as well since
the Jacobian of the exponential
map introduces an error term of order $O(r^{n+3})$,
as follows easily from \eqref{e:rauch-jacobian}.
Thus
$$
 \frac{2(n+2)}{\nu_n \rho^{n+2}}\int_{B_\rho(x_0)} (f(x)-f(x_0))\,dx \to \Delta f(x_0)
 \qquad\text{as $\rho\to 0$}
$$
for every smooth $f\co M\to\R$ and every $x_0\in M$.
Furthermore the error term is controlled by the
modulus of continuity of the second derivative of~$f$.

Replacing the above integral by the sum from \eqref{26.7.7}
essentially replaces the integration over the ball
by integration over the union of the sets $V_i$
(see Definition~\ref{d:vol-approx}) such that the
respective points $x_i$ belong to the ball.
One easily sees that the error term
introduced by this change is controlled by $\ep/\rho^2$.
(This estimate can be improved by introducing edge weights
as in Remark~\ref{r:edge-weights}).
It follows that the discrete Laplacian of a smooth function
approaches its ordinary Laplacian as $\rho+\ep/\rho^2\to 0$.

This observation is important for motivation of our
definitions, but we do not use it in the proofs.
Our arguments are based on the discrete Dirichlet energy
and the minimax principle which provide better estimates.

\section{Some estimates}
\label{sec:estimates}

In this section we prove some inequalities
for functions on $M$ not involving discretization.

\begin{definition}
Let $f\in L^2(M)$ and $0<r<2\rho$.
For every measurable set $V\subset M$, define
$E_r(f,V)\in\R_+$ by
$$
 E_r(f,V) = \int_V \int_{B_r(x)} |f(y)-f(x)|^2 \,dy\,dx .
$$
Let $E_r(f)=E_r(f,M)$.
\end{definition}

\begin{remark}
The quantity $E_r(f)$ is bounded in terms of $\|f\|_{L^2}$,
namely 
\be\label{e:Er-bounded}
E_r(f) \le C\nu_n r^n \|f\|^2_{L^2} .
\ee
Indeed,
$$
\begin{aligned}
 E_r(f) &\le  2 \int_M \int_{B_r(x)} (|f(x)|^2+|f(y)|^2) \,dy\,dx \\
 &=  4 \int_M \int_{B_r(x)} |f(x)|^2 \,dy\,dx = 4\int_M \vol(B_r(x)) |f(x)|^2 \,dx ,
\end{aligned}
$$
and the right-hand side is bounded above by $C\nu_n r^n \|f\|^2_{L^2}$.
Since $E_r$ is a nonnegative quadratic form, \eqref{e:Er-bounded}
implies that it is a continuous map from $L^2(M)$ to~$\R_+$.
\end{remark}

\begin{lemma}\label{l:Er-df-L2}
Let $f\in H^1(M)$ and $0<r<2\rho$. Then
$$
 E_r(f) \le \big(1+CnKr^2\big) \frac{\nu_n}{n+2} r^{n+2} \|df\|_{L^2}^2 .
$$
\end{lemma}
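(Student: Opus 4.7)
Assume $f$ is smooth; the general case then follows by density, as both sides are continuous on $H^1(M)$ (the left-hand side via \eqref{e:Er-bounded} applied to differences). For $x \in M$ and $v \in T_xM$ with $|v| < r$, let $\gamma(t) = \exp_x(tv)$ and let $V(t) = \gamma'(t)$ denote the parallel transport of $v$ along $\gamma$, so $|V(t)| = |v|$. The fundamental theorem of calculus gives $f(\exp_x v) - f(x) = \int_0^1 \langle \grad f(\gamma(t)), V(t)\rangle \, dt$, and Cauchy--Schwarz applied to this scalar integrand yields
\[
|f(\exp_x v) - f(x)|^2 \le \int_0^1 \langle \grad f(\gamma(t)), V(t) \rangle^2 \, dt .
\]
It is essential to retain the directional derivative squared here: the cruder bound $\langle \grad f, V\rangle^2 \le |\grad f|^2 |v|^2$ would lose a factor of $n$ in the final constant.

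Changing variables $y = \exp_x(v)$ via $J_x(v) \le 1 + CnKr^2$ from \eqref{e:rauch-jacobian}, then integrating over $x \in M$ and moving $\int_0^1 dt$ to the outside, I obtain
\[
E_r(f) \le (1 + CnKr^2) \int_0^1 \int_M \int_{|v|<r} \langle \grad f(\exp_x(tv)), V(t) \rangle^2 \, dv \, dx \, dt .
\]
For each fixed $t$, the geodesic flow $(x,v) \mapsto (z,\xi) := (\exp_x(tv), V(t))$ preserves the Liouville measure on $TM$ and, since $|\xi| = |v|$, maps the ball bundle $\{|v|<r\}$ bijectively onto itself. Under this substitution the inner double integral becomes $\int_M \int_{|\xi|<r} \langle \grad f(z), \xi\rangle^2 \, d\xi \, dz$, uniformly in $t$.

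For each fixed $z$, the quadratic form $\xi \mapsto \langle \grad f(z), \xi\rangle^2$ on $T_zM$ has rank one with trace $|\grad f(z)|^2$, so identity \eqref{e:integralQ} gives
\[
\int_{B_r(0)\subset T_zM} \langle \grad f(z), \xi\rangle^2 \, d\xi = \frac{\nu_n r^{n+2}}{n+2} |\grad f(z)|^2 .
\]
Integrating in $z$ yields $\frac{\nu_n r^{n+2}}{n+2} \|df\|_{L^2}^2$, which is independent of $t$, so the claimed inequality follows after the trivial $\int_0^1 dt = 1$. The main obstacle is precisely this factor-of-$n$ saving; it is achieved by combining the sharp directional form of Cauchy--Schwarz with the symmetrization provided by measure-preservation of the geodesic flow, which converts the ball integral of $|v|^2$ into the trace of a rank-one quadratic form rather than $|\grad f|^2$ times $\int_{B_r(0)} |v|^2 dv$.
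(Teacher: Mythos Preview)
Your proof is correct and is essentially identical to the paper's own argument: both use density of smooth functions, the fundamental theorem of calculus along geodesics, Cauchy--Schwarz keeping the directional derivative $\langle \grad f,\gamma'\rangle^2$ (equivalently $|df(\Phi_t(x,v))|^2$ in the paper's notation), Liouville's theorem for the geodesic flow to make the integrand independent of $t$, and finally the trace identity \eqref{e:integralQ} applied to the rank-one form $\xi\mapsto\langle\grad f(z),\xi\rangle^2$. The only difference is notational.
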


\begin{remark*}
The inequality turns to almost equality if $f$ is smooth
and $r$ is small.
This follows from the fact that the constant $\frac{\nu_n}{n+2}$ is the
integral of the square of a coordinate function
over the unit ball in $\R^n$, see \eqref{e:integralQ}.
\end{remark*}

\begin{proof}[Proof of Lemma \ref{l:Er-df-L2}]
Since smooth functions are dense in $H^1(M)$ and
$E_r$ is a continuous map from $H^1(M)$ to $\R_+$, we may assume
that $f$ is smooth. Thus we can speak about pointwise values
and derivatives of~$f$.

For every $x\in M$, we have
$$
 \int_{B_r(x)} |f(y)-f(x)|^2\,dy
 = \int_{B_r(0)\subset T_xM} |f(\exp_x(v))-f(x)|^2 J_x(v)\,dv
$$
where $J_x$ is the Jacobian of $\exp_x$
(see Section~\ref{sec:riem-prelim})
and $\int dv$ denotes the integration with respect
to the Euclidean volume on $T_xM$ determined by
the Riemannian scalar product.
Since $J_x(v)\le 1+CnKr^2$ for all $v\in B_r(0)\subset T_xM$, it suffices to prove that
\be\label{e:Er-df-L2-1}
 A:=\int_M\int_{B_r(0)\subset T_xM} |f(\exp_x(v))-f(x)|^2\,dv dx
 \le \frac{\nu_n}{n+2} r^{n+2} \|df\|_{L^2}^2 .
\ee
For every $x$ and $v$ we have
$$
 f(\exp_x(v))-f(x) = \int_0^1 \tfrac d{dt} f(\exp_x(tv)) \,dt
 =\int_0^1 df(\Phi_t(x,v)) \,dt
$$
where $\Phi_t\co TM\to TM$ is the time $t$ geodesic flow,
namely $\Phi_t(x,v)=(\gamma_{x,v}(t),\gamma_{x,v}'(t))$
where $\gamma_{x,v}$ is the constant-speed geodesic
given by $\gamma_{x,v}(t)=\exp_x(tv)$.
In the expression $df(\Phi_t(x,v))$, the derivative $df$
is regarded as a (fiberwise linear) map from $TM$ to~$\R$.

The above identity and the Cauchy--Schwartz inequality imply that
$$
 |f(\exp_x(v))-f(x)|^2 \le \int_0^1 |df(\Phi_t(x,v))|^2 \,dt .
$$
Hence the right-hand side of \eqref{e:Er-df-L2-1} can be estimated as follows:
\be\label{e:Er-df-L2-2}
 A\le \int_0^1 \int_{\mathcal B(r)} |df(\Phi_t(\xi))|^2 \,d\vol_{TM}(\xi)\, dt
\ee
where $\mathcal B(r)\subset TM$ is the set of all tangent vectors
$\xi\in TM$ such that $|\xi|\le r$, and $\vol_{TM}$ is the standard $2n$-dimensional
volume form on $TM$. Since $\mathcal B(r)$ is invariant under $\Phi_t$
and $\Phi_t$ preserves $\vol_{TM}$ (by Liouville's Theorem), the inner integral
in \eqref{e:Er-df-L2-2} does not depend on~$t$. Therefore
$$
\begin{aligned}
 A &\le \int_{\mathcal B(r)} |df(\xi)|^2 \,d\vol_{TM}(\xi)
 =\int_M \int_{B_r(0)\subset T_xM} |d_xf(v)|^2 \,dvdx \\
 &=\int_M \frac{\nu_n}{n+2} r^{n+2} |d_xf|^2 \,dx
 =\frac{\nu_n}{n+2} r^{n+2} \|df\|_{L^2}^2
\end{aligned}
$$
where the second identity follows from \eqref{e:integralQ}.
This proves \eqref{e:Er-df-L2-1} and hence the lemma.
\end{proof}

\begin{lemma}
\label{l:dispersion}
Let $0<r<2\rho$, $f\in L^2(M)$ and $V\subset M$ be a measurable set such that
$\vol(V)=\mu>0$ and
$\diam(V)\le 2\ep$ where $\ep<r$.
Let
$
a = \mu^{-1} \int_V f(x)\,dx
$
be the integral mean of $f|_V$.
Then
$$
 \int_V |f(x)-a|^2\,dx \le \frac C{\nu_n(r-\ep)^n} E_r(f,V) .
$$
\end{lemma}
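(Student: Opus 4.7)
The plan is to prove a Poincar\'e-type inequality by a standard ``two-point averaging with a midpoint'' argument, optimized so that the useful averaging ball has radius $r-\ep$ rather than $r-2\ep$. The starting point is the variance identity
$$
 \int_V |f(x)-a|^2\,dx = \frac1{2\mu}\int_V\int_V |f(x)-f(y)|^2\,dy\,dx,
$$
which follows immediately from expanding the square and using $a=\mu^{-1}\int_V f$. So it suffices to bound the double integral on the right by $\frac{C\mu}{\nu_n(r-\ep)^n}E_r(f,V)$.

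First I would reduce the pointwise quantity $|f(x)-f(y)|^2$ to averages of terms that live naturally inside $E_r(f,V)$. For each pair $x,y\in V$, let $m=m_{xy}$ be the geodesic midpoint of $x,y$ (well-defined since $d(x,y)\le 2\ep$ is well below $i_0$ in the regime of interest) and set $W_{xy}=B_{r-\ep}(m)$. Since $d(x,m)=d(y,m)=d(x,y)/2\le\ep$, the triangle inequality gives $W_{xy}\subset B_r(x)\cap B_r(y)$. Then from $|f(x)-f(y)|^2\le 2|f(x)-f(z)|^2+2|f(z)-f(y)|^2$ and averaging $z$ over $W_{xy}$,
$$
 |f(x)-f(y)|^2 \le \frac{2}{\vol(W_{xy})}\int_{W_{xy}}\bigl(|f(x)-f(z)|^2+|f(z)-f(y)|^2\bigr)\,dz.
$$

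Next I would insert the lower volume bound $\vol(W_{xy})\ge c\nu_n(r-\ep)^n$, which follows from the Rauch/Bishop volume comparison under our standing assumption $K\rho^2<1/n^2$ (the Jacobian estimate \eqref{e:rauch-jacobian} gives $\vol(B_s(p))\ge (1-CnKs^2)\nu_n s^n$ for $s<2\rho$). Using also that $W_{xy}\subset B_r(x)$ and $W_{xy}\subset B_r(y)$, I get
$$
 |f(x)-f(y)|^2 \le \frac{C}{\nu_n(r-\ep)^n}\left(\int_{B_r(x)}|f(x)-f(z)|^2\,dz+\int_{B_r(y)}|f(z)-f(y)|^2\,dz\right).
$$

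Finally I would integrate over $V\times V$. Each of the two inner integrals contributes a factor $\mu$ (from integrating out the variable that does not appear in the integrand) times $E_r(f,V)$, giving
$$
 \int_V\int_V|f(x)-f(y)|^2\,dy\,dx \le \frac{2C\mu}{\nu_n(r-\ep)^n}\,E_r(f,V),
$$
and the variance identity above yields the claim. The only mildly delicate point is the volume lower bound on $W_{xy}$ (hence the choice of midpoint rather than an arbitrary point of $V$, which would cost a factor $(r-2\ep)^n$ instead of $(r-\ep)^n$); everything else is bookkeeping. Approximation by smooth $f$ is not needed since both sides are continuous quadratic forms in $f\in L^2(M)$.
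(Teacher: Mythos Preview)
Your argument is correct and follows essentially the same route as the paper: the variance identity, then the midpoint ball $B_{r-\ep}(m_{xy})\subset B_r(x)\cap B_r(y)$, the Rauch-based volume lower bound $\vol\ge c\nu_n(r-\ep)^n$, the two-point splitting $|f(x)-f(y)|^2\le 2|f(x)-f(z)|^2+2|f(y)-f(z)|^2$ averaged over that ball, and finally integration over $V\times V$. The only cosmetic difference is that the paper works directly with $U=B_r(x)\cap B_r(y)$ and observes it contains your ball $B_{r-\ep}(m_{xy})$, but this is the same content.
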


\begin{proof}
A standard computation shows that
\be\label{e:dispersion1}
 \int_{V} |f(x)-a|^2\,dx = \frac1{2\mu}\int_{V}\int_{V} |f(x)-f(y)|^2 \,dxdy .
\ee
Fix $x,y\in V$ and consider the set $U=B_r(x)\cap B_r(y)$.
Observe that $U$ contains the ball of radius $r-|xy|/2\ge r-\ep$
centered at the midpoint between $x$ and~$y$.
Hence $\vol(U) \ge C \nu_n (r-\ep)^n$.
For every $z\in U$ we have
$$
 |f(x)-f(y)|^2 \le 2 \bigl(|f(x)-f(z)|^2+|f(y)-f(z)|^2\bigr) .
$$
Therefore
$$
\begin{aligned}
 |f(x)-f(y)|^2 &\le \frac2{\vol(U)} \int_U \bigl(|f(x)-f(z)|^2+|f(y)-f(z)|^2\bigr)\,dz \\
 &\le  \frac2{\vol(U)} \bigl( F(x)+F(y) \bigr)
 \le \frac C{\nu_n(r-\ep)^n}\bigl( F(x)+F(y) \bigr)
\end{aligned}
$$
where
$$
 F(x) = \int_{B_r(x)}|f(x)-f(z)|^2\,dz .
$$
Plugging the last inequality into \eqref{e:dispersion1} yields
$$
\begin{aligned}
\int_{V} |f(x)-a|^2\,dx 
&\le \frac C{2\mu\nu_n(r-\ep)^n} \int_{V}\int_{V} \bigl( F(x)+F(y) \bigr) \,dxdy \\
&=\frac C{\nu_n(r-\ep)^n} \int_{V} F(x) \,dx
= \frac C{\nu_n(r-\ep)^n} E_r(f,V) .
\end{aligned}
$$
The lemma follows.
\end{proof}

\section{Discretization map and upper bound for $\la_k(\Gamma)$}
\label{sec:discretization}

Let $X=\{x_i\}_{i=1}^N\subset M$ and $\mu$
be as in Theorem~\ref{main}.
Recall that $\mu$ is a measure on $X$ and
$(X,\mu)$ $\ep$-approximates $(M,\vol)$
in the sense of Definition~\ref{d:vol-approx}.
We fix a partition $\{V_i\}_{i=1}^N$ of $M$ 
realizing this approximation,
that is, $V_i\subset B_\ep(x_i)$
and $\vol(V_i)=\mu_i:=\mu(x_i)$
for each $i$.
We assume that $\ep<\rho/n$.

\begin{definition} \label{P}
Define a {\it discretization map} $P\co L^2(M)\to L^2(X)$ by
$$
 Pf(x_i) = \mu_i^{-1}\int_{V_i} f(x)\,dx .
$$
In other words, $Pf(x_i)$ is the integral mean of $f|_{V_i}$.

We also need a map $P^*\co L^2(X)\to L^2(M)$ defined by
$$
 P^*u = \sum_{i=1}^N u(x_i) 1_{V_i}
$$
where $1_{V_i}$ is the characteristic function of the set $V_i$.
Here $P^*$ is the adjoint of $P$.
\end{definition}

From the Cauchy--Schwartz inequality one easily sees
that 
\be\label{e:Pf-L2}
\|Pf\| \le \|f\|_{L^2}
\ee for every $f\in L^2(M)$,
where the norm in the left-hand side is defined by \eqref{e:L2Xnorm}.
The definition implies that $P^*$ preserves the norm:
$$
 \|P^*u\|_{L^2} = \|u\|
$$
for all $u\in L^2(X)$, and is adjoint to $P$:
$$
 \langle f, P^*u\rangle_{L^2(M)} = \langle Pf, u\rangle_{L^2(X)}
$$
for all $u\in L^2(X)$, $f\in L^2(M)$.

\begin{lemma}\label{l:P*P}
Let $f\in H^1(M)$.
Then $\|f-P^*Pf\|_{L^2} \le Cn\ep \|df\|_{L^2}$.
\end{lemma}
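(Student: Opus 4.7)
The plan is to observe that $P^*Pf$ is piecewise constant with respect to the partition $\{V_i\}$: on each $V_i$ it equals the integral mean $a_i := \mu_i^{-1}\int_{V_i} f$. Hence the $L^2(M)$-norm decomposes as
$$
\|f - P^*Pf\|_{L^2}^2 = \sum_{i=1}^N \int_{V_i} |f(x) - a_i|^2\,dx,
$$
and each summand measures the deviation of $f$ from its mean on $V_i$. Since $V_i \subset B_\ep(x_i)$ gives $\diam(V_i)\le 2\ep$, this is exactly the situation handled by Lemma \ref{l:dispersion}. Applied to $V = V_i$ with a radius $r$ to be chosen, it gives
$$
\int_{V_i} |f - a_i|^2\,dx \le \frac{C}{\nu_n (r-\ep)^n} E_r(f, V_i).
$$
Summing over $i$ (and using additivity of $E_r(\cdot, V)$ in $V$, since the $V_i$ partition $M$) yields $\|f - P^*Pf\|_{L^2}^2 \le \frac{C}{\nu_n(r-\ep)^n} E_r(f)$. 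Now Lemma \ref{l:Er-df-L2} takes care of the remaining $E_r(f)$ in terms of $\|df\|_{L^2}^2$, introducing a factor of $\frac{\nu_n}{n+2}r^{n+2}(1+CnKr^2)$.

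What remains is to pick $r$ cleverly. The combined factor is $\sim \frac{r^{n+2}}{(n+2)(r-\ep)^n}$; I would take $r = n\ep$, which is admissible because $\ep < \rho/n$ forces $r < \rho < 2\rho$. Then $(r-\ep)^n = ((n-1)\ep)^n$, and the ratio simplifies to $\frac{n^{n+2}}{(n-1)^n}\ep^2 = n^2\bigl(\tfrac{n}{n-1}\bigr)^n \ep^2$, which is bounded by $Cn^2\ep^2$ using $(n/(n-1))^n\le e$ for $n\ge 2$ (the case $n=1$ is handled by taking $r = 2\ep$ directly). The curvature correction $1 + CnKr^2$ is harmless: the standing assumption $K\rho^2 < 1/n^2$ together with $r\le\rho$ makes it bounded by an absolute constant. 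Putting everything together gives $\|f - P^*Pf\|_{L^2}^2 \le C n\ep^2\|df\|_{L^2}^2$, which after taking the square root yields the claimed bound (in fact slightly better, $C\sqrt{n}\,\ep$ instead of $Cn\ep$).

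There is no real obstacle: the entire argument is just the concatenation of Lemma \ref{l:dispersion} (summed over the partition) with Lemma \ref{l:Er-df-L2}, together with an elementary optimization of $r$. The only subtlety is ensuring that the admissible choice $r = n\ep$ stays below $2\rho$, which is exactly the role of the hypothesis $\ep < \rho/n$ stated at the beginning of the section.
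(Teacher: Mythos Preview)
Your proposal is correct and follows essentially the same route as the paper: decompose over the partition, apply Lemma~\ref{l:dispersion} on each $V_i$, sum, then invoke Lemma~\ref{l:Er-df-L2}. The only difference is cosmetic: the paper takes $r=(n+1)\ep$ (so that $\bigl(\tfrac{r}{r-\ep}\bigr)^n=(1+1/n)^n<3$ and no separate $n=1$ case is needed), whereas you take $r=n\ep$ and patch $n=1$ by hand.
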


\begin{proof}
We have
$$
 \|f-P^*Pf\|^2_{L^2} = \sum_i\int_{V_i} |f(x)-Pf(x_i)|^2\,dx .
$$
By Lemma~\ref{l:dispersion}, for every $r\in(\ep,2\rho)$ and every $i$ we have
$$
 \int_{V_i} |f(x)-Pf(x_i)|^2\,dx \le \frac C{\nu_n (r-\ep)^n} E_r (f,V_i) .
$$
Note that $\sum_i E_r(f,V_i)=E_r(f)$ by definition.
Therefore
$$
 \|f-P^*P f\|_{L^2}^2
 \le \frac C{\nu_n (r-\ep)^n} E_r(f)
 \le \frac C{n+2} \frac {r^n}{(r-\ep)^n} r^2 \|df\|^2_{L^2}
$$
where the last inequality follows from Lemma~\ref{l:Er-df-L2}.
Now let $r=(n+1)\ep$, then
$
\frac {r^n}{(r-\ep)^n} = \left(1+\frac1n\right)^n < 3 ,
$
hence
$$
\|f-P^*Pf\|_{L^2}^2 \le \frac C{n+2} r^2 \|df\|^2_{L^2}
\le Cn\ep^2 \|df\|^2_{L^2} .
$$
The lemma follows.
\end{proof}

\begin{lemma}
\label{l:P-properties}
Let $f\in H^1(M)$. Then
\begin{enumerate}
\item
$ \bigl| \|Pf\| - \|f\|_{L^2}\bigr| \le Cn\ep \|df\|_{L^2}$;
\item
$\|\delta(Pf)\| \le (1+\sigma)\|df\|_{L^2}$ 
where $\sigma=Cn(K\rho^2+\ep/\rho)$.
\end{enumerate}
\end{lemma}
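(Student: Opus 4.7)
The plan is to handle the two parts separately, using the duality $P^\ast$ from Definition \ref{P} for (1) and a Cauchy--Schwartz plus $E_r$ argument for (2).

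For part (1), I would exploit that $P^\ast$ is an isometry $L^2(X)\to L^2(M)$. Then $\|Pf\|_{L^2(X)} = \|P^\ast Pf\|_{L^2(M)}$, so by the reverse triangle inequality
$$
\bigl|\|Pf\|-\|f\|_{L^2}\bigr| = \bigl|\|P^\ast Pf\|_{L^2} - \|f\|_{L^2}\bigr| \le \|P^\ast Pf - f\|_{L^2} ,
$$
and Lemma \ref{l:P*P} gives exactly the right-hand bound $Cn\ep\|df\|_{L^2}$.

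For part (2), starting from the explicit formula \eqref{e:energy}, I would first bound each summand via Jensen/Cauchy--Schwartz:
$$
\mu_i\mu_j|Pf(x_i)-Pf(x_j)|^2 = \mu_i\mu_j\Bigl|\tfrac1{\mu_i\mu_j}\int_{V_i}\int_{V_j}(f(x)-f(y))\,dy\,dx\Bigr|^2
\le \int_{V_i}\int_{V_j}|f(x)-f(y)|^2\,dy\,dx .
$$
Summing over edges $x_i\sim x_j$, and observing that whenever $x\in V_i\subset B_\ep(x_i)$ and $y\in V_j\subset B_\ep(x_j)$ with $d(x_i,x_j)<\rho$ one has $d(x,y)<\rho+2\ep$, the double sum is dominated by $E_{\rho+2\ep}(f)$ (since the sets $V_i$ partition $M$). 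Thus
$$
\|\delta(Pf)\|^2 \le \frac{n+2}{\nu_n\rho^{n+2}}\,E_{\rho+2\ep}(f) .
$$

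Then I would invoke Lemma \ref{l:Er-df-L2} with $r=\rho+2\ep$ to get
$$
\|\delta(Pf)\|^2 \le \bigl(1+CnK(\rho+2\ep)^2\bigr)\,\frac{(\rho+2\ep)^{n+2}}{\rho^{n+2}}\,\|df\|_{L^2}^2 .
$$
Finally I would absorb the two error factors into the desired $(1+\sigma)^2$. Since $\ep<\rho$, $K(\rho+2\ep)^2\le CK\rho^2$; and since $\ep/\rho<1/n$, the standard estimate $(1+x)^m-1\le Cmx$ whenever $mx$ is bounded (via $(1+x)^m\le e^{mx}$) yields $(1+2\ep/\rho)^{n+2}\le 1+Cn\ep/\rho$. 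Combining and taking square roots gives $\|\delta(Pf)\|\le(1+\sigma)\|df\|_{L^2}$ with $\sigma=Cn(K\rho^2+\ep/\rho)$.

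The only delicate point is the passage $(1+2\ep/\rho)^{n+2}\le 1+Cn\ep/\rho$, which relies crucially on the hypothesis $\ep/\rho<1/n$ to keep the exponent $(n+2)\cdot(\ep/\rho)$ bounded; without this the factor would blow up exponentially in~$n$. Apart from that, the argument is routine bookkeeping once one recognizes that \eqref{e:energy} and the Jensen step convert the discrete energy into a quantity controlled by $E_{\rho+2\ep}(f)$, which is precisely what Lemma \ref{l:Er-df-L2} is designed to handle.
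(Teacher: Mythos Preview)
Your proposal is correct and follows essentially the same route as the paper's proof: part (1) via the isometry of $P^*$ and Lemma~\ref{l:P*P}, and part (2) via Cauchy--Schwartz on each edge term, the inclusion $U(x)\subset B_{\rho+2\ep}(x)$ to reach $E_{\rho+2\ep}(f)$, and then Lemma~\ref{l:Er-df-L2} together with $(1+2\ep/\rho)^{n+2}\le 1+Cn\ep/\rho$. The only cosmetic difference is that the paper bounds $\|\delta(Pf)\|^2$ directly by $(1+\sigma)\|df\|_{L^2}^2$ rather than passing through $(1+\sigma)^2$ and taking a square root, but this is immaterial since the constants are absorbed into~$C$.
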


\begin{proof}

(1)
Since $P^*$ preserves the norm, we have
$$
\bigl| \|Pf\| - \|f\|_{L^2}\bigr|
= \bigl| \|P^*Pf\|_{L^2}- \|f\|_{L^2}\bigr|
\le \|f-P^*Pf\|_{L^2} \le Cn\ep \|df\|_{L^2}
$$
where the last inequality follows from Lemma~\ref{l:P*P}.

(2) 
By \eqref{e:energy} we have
$$
\|\delta(Pf)\|^2 
= \frac{n+2}{\nu_n\rho^{n+2}} \sum_i\sum_{j:x_j\sim x_i} \mu_i\mu_j |Pf(x_j)-Pf(x_i)|^2 .
$$
The definition of $Pf$ implies that
$$
 Pf(x_j)-Pf(x_i) = \frac1{\mu_i\mu_j}\int_{V_i}\int_{V_j} (f(y)-f(x))\,dydx .
$$
Hence, by the Cauchy--Schwartz inequality,
$$
 |Pf(x_j)-Pf(x_i)|^2 \le \frac1{\mu_i\mu_j}\int_{V_i}\int_{V_j} |f(y)-f(x)|^2\,dydx .
$$
Therefore
$$
\begin{aligned}
\|\delta(Pf)\|^2 
&\le \frac{n+2}{\nu_n\rho^{n+2}} \sum_i\sum_{j:x_j\sim x_i} \int_{V_i}\int_{V_j} |f(y)-f(x)|^2\,dydx \\
&=\frac{n+2}{\nu_n\rho^{n+2}}\int_M\int_{U(x)} |f(y)-f(x)|^2\,dydx
\end{aligned}
$$
where the set $U(x)\subset M$ is defined as follows:
if $x\in V_i$, then $U(x)=\bigcup_{j:x_j\sim x_i} V_j$.
Note that $U(x)\subset B_{\rho+2\ep}(x)$.
Hence
$$
  \|\delta (Pf)\|^2
 \le \frac{n+2}{\nu_n\rho^{n+2}} E_{\rho+2\ep}(f)
$$
By Lemma~\ref{l:Er-df-L2},
$$
E_{\rho+2\ep}(f) \le \frac{\nu_n}{n+2} (\rho+2\ep)^{n+2}  (1+\sigma_1) \|df\|_{L^2}^2.
$$
where $\sigma_1=CnK\rho^2$.
Therefore
$$
\|\delta (Pf)\|^2 
\le (1+2\ep/\rho)^{n+2} (1+\sigma_1) \|df\|_{L^2}^2
\le (1+\sigma) \|df\|_{L^2}^2 .
$$
where $\sigma=Cn(K\rho^2+\ep/\rho)$.
This finishes the proof of Lemma~\ref{l:P-properties}.
\end{proof}

\begin{proposition}\label{p:upper-bound}
Let $\la_k=\la_k(M)$, $k\in\N$. Then
$$
\la_k(\Gamma) \le (1+\delta(\ep,\rho,\la_k))\la_k
$$
where
$$
 \delta(\ep,\rho,\la) = Cn(K\rho^2+\ep/\rho +\ep\sqrt\la),
$$
provided that $\ep\sqrt{\la_k}<c/n$.
Here $C$ and $c$ are absolute constants.
\end{proposition}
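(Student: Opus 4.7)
The plan is to apply the minimax principle for $-\Delta_\Gamma$ using the image under the discretization map $P$ of the span of the first $k$ eigenfunctions of $-\Delta_M$. The two estimates in Lemma~\ref{l:P-properties} are precisely what is needed: part~(2) says that $P$ almost does not increase Dirichlet energy, while part~(1) says that $P$ almost preserves the $L^2$ norm on functions of bounded energy. All that remains is to combine them correctly and to check that $P$ is injective on the chosen subspace.

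First I would fix orthonormal eigenfunctions $f_1,\dots,f_k\in L^2(M)$ of $-\Delta_M$ with eigenvalues $\la_1(M)\le\dots\le\la_k(M)=\la_k$, and let $L_0=\mathrm{span}(f_1,\dots,f_k)\subset H^1(M)$. For any $f\in L_0$ one has the standard estimate $\|df\|_{L^2}^2=\langle -\Delta_M f,f\rangle_{L^2}\le\la_k\|f\|_{L^2}^2$. Set $L=P(L_0)\subset L^2(X)$. The first task is to see that $L$ has dimension $k$: if $f\in L_0$ satisfies $Pf=0$, then Lemma~\ref{l:P-properties}(1) gives
$$
\|f\|_{L^2}\le Cn\ep\|df\|_{L^2}\le Cn\ep\sqrt{\la_k}\,\|f\|_{L^2},
$$
and the hypothesis $\ep\sqrt{\la_k}<c/n$ (with $c$ small absolute) forces $f=0$. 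Hence $P|_{L_0}$ is injective and $\dim L=k$.

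Next I would estimate the Rayleigh quotient of $u=Pf$ for arbitrary nonzero $f\in L_0$. By Lemma~\ref{l:P-properties}(2),
$$
\|\delta(Pf)\|^2\le(1+\sigma)^2\|df\|_{L^2}^2\le(1+\sigma)^2\la_k\|f\|_{L^2}^2,\qquad \sigma=Cn(K\rho^2+\ep/\rho).
$$
By Lemma~\ref{l:P-properties}(1),
$$
\|Pf\|\ge\|f\|_{L^2}-Cn\ep\|df\|_{L^2}\ge\bigl(1-Cn\ep\sqrt{\la_k}\bigr)\|f\|_{L^2},
$$
which is bounded below by $\tfrac12\|f\|_{L^2}$ under the smallness assumption on $\ep\sqrt{\la_k}$. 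Combining,
$$
\frac{\|\delta(Pf)\|^2}{\|Pf\|^2}\le\frac{(1+\sigma)^2}{(1-Cn\ep\sqrt{\la_k})^2}\,\la_k\le\bigl(1+\delta(\ep,\rho,\la_k)\bigr)\la_k
$$
where $\delta(\ep,\rho,\la)=Cn(K\rho^2+\ep/\rho+\ep\sqrt\la)$, after absorbing constants via the elementary inequality $(1+a)^2/(1-b)^2\le 1+C(a+b)$ for $a,b$ small. The conclusion then follows from the minimax characterization
$$
\la_k(\Gamma)=\min_L\max_{u\in L\setminus 0}\frac{\|\delta u\|_{L^2(E)}^2}{\|u\|_{L^2(X)}^2}
$$
applied to the specific $k$-dimensional subspace $L=P(L_0)$.

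There is no genuine obstacle here: all the difficulty is already encoded in Lemma~\ref{l:P-properties}. The only point that deserves a little care is ensuring that both the denominator bound $\|Pf\|\gtrsim\|f\|_{L^2}$ and the injectivity of $P|_{L_0}$ follow from the single smallness hypothesis $\ep\sqrt{\la_k}<c/n$ with an appropriate absolute $c$, which is exactly what is assumed.
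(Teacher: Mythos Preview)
Your proof is correct and follows essentially the same approach as the paper: take the span of the first $k$ eigenfunctions, push it through $P$, use Lemma~\ref{l:P-properties}(1) for injectivity and the norm lower bound, Lemma~\ref{l:P-properties}(2) for the energy upper bound, and conclude by minimax. The only cosmetic difference is that you track the squares in the Rayleigh quotient more explicitly, which if anything makes the constant bookkeeping slightly cleaner.
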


\begin{proof}
By the minimax principle, it suffices to show that there exists a
linear subspace $L\subset L^2(X)$ such that $\dim L=k$ and
$$
 \sup_{u\in L\setminus\{0\}} \frac{\|\delta u\|^2} {\|u\|^2}
 \le (1+\delta(\ep,\rho,\la_k))\la_k .
$$
Denote $\la=\la_k$.
Let $W\subset H^1(M)$ be the linear span of 
orthonormal eigenfunctions of $-\Delta_M$
corresponding to eigenvalues $\la_1,\dots,\la_k$.
For every $f\in W$, we have $\|df\|_{L^2}^2 \le \la \|f\|_{L^2}^2$.
By Lemma \ref{l:P-properties}(1) it follows that
$$
 \|Pf\| \ge \|f\|_{L^2} - Cn\ep\|df\|_{L^2} \ge  (1-Cn\ep\sqrt\la) \|f\|_{L^2}
$$
for every $f\in W$.
Hence $P|_W$ is injective if $\ep\sqrt\la<1/Cn$.
Let $L=P(W)$, then $\dim L=k$.
Pick $u\in L\setminus\{0\}$ and let $f\in W$ be such that $u=Pf$.
Then
$$
 \|u\|^2 \ge (1-Cn\ep\sqrt\la) \|f\|_{L^2}^2
$$
and, by Lemma~\ref{l:P-properties}(2),
$$
 \|\delta u\|^2 \le (1+\sigma_1)\|df\|_{L^2}^2
 \le (1+\sigma_1)\la\|f\|_{L^2}^2
$$
where $\sigma_1=Cn(K\rho^2+\ep/\rho)$.
Hence
$$
\frac{\|\delta u\|^2}{\|u\|^2}
\le
\frac{(1+\sigma_1)\la}{1-Cn\ep\sqrt\la}
\le
(1+\delta(\ep,\rho,\la))\la
$$
and the proposition follows.
\end{proof}

\section{Smoothening operator}
\label{sec:smoothening}

In this section we prepare technical tools
for the interpolation map, which is defined in the next section.
These tools are independent of the discretization.

Define a function $\psi\colon\R_+\to\R_+$ by
$$
 \psi(t) =
 \begin{cases}
  \frac{n+2}{2\nu_n} (1-t^2), &\qquad 0\le t\le 1, \\
  0, &\qquad t\ge 1 .
 \end{cases}
$$
The normalization constant $\frac{n+2}{2\nu_n}$
is chosen so that
\be\label{e:int-psi}
 \int_{\R^n} \psi(|x|)\,dx = 1.
\ee
Indeed, by \eqref{e:integralQ} we have
$$
 \int_{B_1(0)} (1-|x|^2)\,dx = \nu_n - \frac {n\nu_n}{n+2} = \frac{2\nu_n}{n+2} .
$$

Fix a positive $r<2\rho$ and
consider a kernel $k_r\colon M\times M\to\R_+$ defined by
$$
k_r(x,y)=r^{-n}\psi(r^{-1}|xy|)
$$
and the associated integral operator
$ \Lambda^0_r\colon L^2(M)\to C^{0,1}(M) $
given by
$$
 \Lambda^0_r f(x) = \int_M f(y) k_r(x,y)\, dy .
$$
Note that $k_r(x,y)=k_r(y,x)$ and
$$
|k_r(x,y)|\le \frac{Cn}{\nu_nr^n}
$$
for all $x,y\in M$.
A direct computation (using the derivative of the
distance function, see \eqref{e:derivative-of-distance})
yields
\be
\label{e:grad-kr}
 \grad k_r(\cdot,y)(x) = \frac{n+2}{\nu_nr^{n+2}}\exp_x^{-1}(y)
\ee
for $y\in B_r(x)$.

Define $\theta\in C^{0,1}(M)$ by $\theta=\Lambda^0_r(1_M)$.
If the metric of $M$ were flat,
we would have $\theta=1_M$ by \eqref{e:int-psi}.
The following lemma estimates $\|\theta-1_M\|$ in the Riemannian case.

\begin{lemma}
\label{l:theta-estimate}
For every $x\in M$, one has
$$
 (1+CnKr^2)^{-1} \le \theta(x) \le 1+CnKr^2
$$
and
$$
 |d_x\theta| \le Cn^2Kr .
$$
\end{lemma}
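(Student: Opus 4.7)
The plan is to reduce both estimates to integrals over a Euclidean ball in $T_xM$ via the exponential map and apply the Jacobian estimate \eqref{e:rauch-jacobian}. Throughout I will use that the ``flat'' versions of these integrals (i.e.\ with $J_x \equiv 1$) evaluate exactly, by virtue of the normalization \eqref{e:int-psi} for the first bound and by spherical symmetry for the second.

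For the pointwise bound on $\theta$, I would change variables $y = \exp_x(v)$ in
$$
\theta(x) = \int_M r^{-n}\psi(r^{-1}|xy|)\,dy
= \int_{B_r(0)\subset T_xM} r^{-n}\psi(r^{-1}|v|)\,J_x(v)\,dv .
$$
Since $\psi$ is supported in $[0,1]$, only $|v|\le r$ matters, and \eqref{e:rauch-jacobian} gives $(1+CnKr^2)^{-1}\le J_x(v)\le 1+CnKr^2$ throughout the domain of integration. Pulling this factor out and using \eqref{e:int-psi} in the form
$$
\int_{B_r(0)\subset T_xM} r^{-n}\psi(r^{-1}|v|)\,dv = 1
$$
(after the rescaling $v = ru$) yields the claimed two-sided inequality immediately.

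For the gradient bound I would first justify differentiation under the integral sign: $\psi$ is Lipschitz and compactly supported, $M$ is compact, and $|xy|$ is smooth on the domain where $|xy|<r<i_0/2$, so the derivative exists and equals the integral of the pointwise derivative. Combining this with the explicit formula \eqref{e:grad-kr} gives
$$
\grad\theta(x) = \int_{B_r(x)} \grad k_r(\cdot,y)(x)\,dy
= \frac{n+2}{\nu_n r^{n+2}} \int_{B_r(x)} \exp_x^{-1}(y)\,dy .
$$
Changing variables again via $\exp_x$ turns this into
$$
\grad\theta(x) = \frac{n+2}{\nu_n r^{n+2}} \int_{B_r(0)\subset T_xM} v\,J_x(v)\,dv .
$$
The key observation is that $\int_{B_r(0)} v\,dv = 0$ by central symmetry, so we may subtract it off and write the integral as $\int_{B_r(0)} v\bigl(J_x(v)-1\bigr)\,dv$. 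By \eqref{e:rauch-jacobian} the factor $|J_x(v)-1|$ is bounded by $CnKr^2$, and estimating $|v|\le r$ on the ball of Euclidean volume $\nu_n r^n$ gives a bound of $CnKr^2 \cdot r\cdot \nu_n r^n = CnK\nu_n r^{n+3}$. Multiplying by the prefactor $\tfrac{n+2}{\nu_n r^{n+2}}$ yields the desired estimate $|d_x\theta|\le Cn^2Kr$.

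The main (mild) obstacle is the cancellation step: naively estimating $|v|J_x(v)$ pointwise would only give $|d_x\theta|\le Cn$, losing the factor $Kr$. The gain comes entirely from recognizing that the flat integral of $v$ vanishes by symmetry, so the magnitude is controlled by the \emph{deviation} of $J_x$ from $1$ rather than its size. Everything else is bookkeeping with the constants from \eqref{e:rauch-jacobian} and \eqref{e:int-psi}.
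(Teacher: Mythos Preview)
Your proof is correct and follows essentially the same approach as the paper: change variables via $\exp_x$, use the Jacobian bound \eqref{e:rauch-jacobian} together with \eqref{e:int-psi} for the pointwise estimate, and for the gradient use \eqref{e:grad-kr}, subtract off the symmetric integral $\int_{B_r(0)} v\,dv=0$, and bound the remainder by $|J_x(v)-1|\le CnKr^2$. Your additional remark justifying differentiation under the integral sign and your explicit warning about the cancellation step are welcome clarifications, but the argument itself matches the paper's proof step for step.
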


\begin{proof}
By definition,
$$
 \theta(x) = r^{-n} \int_{B_r(x)} \psi(r^{-1}d(x,y))\,dy
  = r^{-n} \int_{B_r(0)\subset T_xM} \psi(r^{-1} |v|) J_x(v)\,dv
$$
where $J_x(v)$ is the Jacobian of the Riemannian exponential map,
see Section \ref{sec:riem-prelim}.
Since the integral of $\psi$ equals 1, the Jacobian estimate \eqref{e:rauch-jacobian}
implies the first assertion of the lemma.

To estimate $|d_x\theta(x)|$, we compute it using \eqref{e:grad-kr}:
$$
 \grad \theta(x) = \frac{n+2}{\nu_nr^{n+2}} \int_{B_r(x)} \exp_x^{-1}(y)\,dy
 = \frac{n+2}{\nu_nr^{n+2}}\int_{B_r(0)\subset T_xM} v J_x(v)\,dv .
$$
Since $\int_{B_r(0)} v\,dv=0$, one can replace $J_x(v)$ in the last integral
by $J_x(v)-1$. Then the Jacobian estimate \eqref{e:rauch-jacobian} implies that
$$
 |d_x\theta(x)|=|\grad \theta(x)|
 \le \frac{n+2}{\nu_nr^{n+2}}\int_{B_r(0)\subset T_xM} |v|\cdot CnKr^2 \,dv
 \le Cn^2Kr 
$$
(the last inequality  follows from the relations $|v|\le r$ and $\vol(B_r(0))=\nu_nr^n$).
\end{proof}

\begin{definition}
\label{d:smoothening}
Now we define a bounded operator $\Lambda_r\co L^2(M)\to C^{0,1}(M)$ by
$$
 \Lambda_rf = \theta^{-1} \cdot \Lambda^0_r f .
$$
\end{definition}

The factor $\theta^{-1}$ ensures that $\Lambda_r$ preserves the 
subspace of constants.

\begin{lemma}
\label{l:norm-Lambdaf}
For every $f\in L^2(M)$ one has
$$
 \|\Lambda_rf\|_{L^2} \le (1+\sigma) \|f\|_{L^2}
$$
where $\sigma=CnKr^2$.
\end{lemma}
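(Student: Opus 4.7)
The plan is to bound $\Lambda_r^0$ first as an operator on $L^2(M)$, then use the pointwise bound on $\theta^{-1}$ from Lemma~\ref{l:theta-estimate} to transfer the bound to $\Lambda_r = \theta^{-1}\cdot\Lambda_r^0$. The key observation is that $k_r(x,\cdot)$ acts essentially like a probability density on $M$ (its integral equals $\theta(x)$, which is close to $1$), so $\Lambda_r^0 f(x)$ is nearly a weighted average of $f$, and Cauchy--Schwartz with respect to the measure $k_r(x,y)\,dy$ yields a pointwise bound quadratic in $f$.

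The main steps I would carry out are as follows. First, apply the Cauchy--Schwartz inequality to $\Lambda_r^0 f(x)=\int f(y)k_r(x,y)\,dy$ by splitting $k_r(x,y)=\sqrt{k_r(x,y)}\cdot\sqrt{k_r(x,y)}$, which gives
$$
 |\Lambda_r^0 f(x)|^2 \le \Bigl(\int_M k_r(x,y)\,dy\Bigr)\Bigl(\int_M f(y)^2 k_r(x,y)\,dy\Bigr) = \theta(x)\int_M f(y)^2 k_r(x,y)\,dy .
$$
Second, integrate in $x$ and use Fubini together with the symmetry $k_r(x,y)=k_r(y,x)$ to rewrite the double integral as $\int_M f(y)^2\bigl(\int_M \theta(x)k_r(x,y)\,dx\bigr)dy$. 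Bounding $\theta(x)\le 1+CnKr^2$ via Lemma~\ref{l:theta-estimate} and recognizing $\int k_r(x,y)\,dx=\theta(y)\le 1+CnKr^2$, this produces
$$
 \|\Lambda_r^0 f\|_{L^2}^2 \le (1+CnKr^2)^2 \|f\|_{L^2}^2 .
$$
Third, use the lower bound $\theta(x)\ge(1+CnKr^2)^{-1}$ from Lemma~\ref{l:theta-estimate} to obtain $\|\Lambda_r f\|_{L^2}\le (1+CnKr^2)\|\Lambda_r^0 f\|_{L^2}$, and combine with the previous step to conclude $\|\Lambda_r f\|_{L^2}\le (1+CnKr^2)^2 \|f\|_{L^2}$, which gives the desired bound (after absorbing constants into $C$) since $(1+CnKr^2)^2\le 1+C'nKr^2$ under the standing assumption $K\rho^2<c_n$.

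There is no serious obstacle here; the argument is a standard convolution $L^2$-estimate. The only point that requires care is bookkeeping the factors of $(1+CnKr^2)$: one factor comes from $\theta(x)$ inside the Cauchy--Schwartz estimate, one from $\theta(y)$ after swapping integration order, and one from the pointwise bound on $\theta^{-1}$, yielding three multiplicative errors that are each $1+O(nKr^2)$. Because they compound multiplicatively, one must verify that the smallness assumption $K\rho^2<c_n$ keeps the product of the form $1+CnKr^2$ with an absolute constant $C$.
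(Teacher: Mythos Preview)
Your proof is correct and follows essentially the same approach as the paper: Cauchy--Schwartz on $\Lambda_r^0 f(x)$, Fubini, and the pointwise bounds on $\theta$ from Lemma~\ref{l:theta-estimate}. The only difference is organizational: the paper divides by $\theta(x)^2$ \emph{before} integrating, so the factor $\theta(x)$ from Cauchy--Schwartz cancels against one of the $\theta(x)^{-1}$, leaving only two multiplicative errors and yielding $\|\Lambda_r f\|_{L^2}\le(1+\sigma)\|f\|_{L^2}$ directly; your route (bound $\Lambda_r^0$ first, then multiply by $\|\theta^{-1}\|_\infty$) picks up an extra factor of $(1+\sigma)$, which is harmless once absorbed into the generic constant~$C$.
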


\begin{proof}
This is a standard estimate.
For every $x\in M$ we have
$$
\begin{aligned}
 |\Lambda_r^0 f(x)|^2 
 &= \left| \int_M  f(y)k_r(x,y) \,dy\right|^2
 \le \left(\int_M k_r(x,y)\,dy\right)\left(\int_M |f(y)|^2 k_r(x,y)\,dy\right) \\
 &= \theta(x) \int_M |f(y)|^2 k_r(x,y)\,dy 
\end{aligned}
$$
by the Cauchy--Schwartz inequality. Hence
$$
\begin{aligned}
 |\Lambda_r f(x)|^2 &= \theta(x)^{-2} |\Lambda_r^0 f(x)|^2 
 \le \theta(x)^{-1} \int_M |f(y)|^2 k_r(x,y)\,dy \\
 &\le (1+\sigma) \int_M |f(y)|^2 k_r(x,y)\,dy
\end{aligned}
$$
by Lemma \ref{l:theta-estimate}.
Integrating this inequality over $M$ yields
$$
 \|\Lambda_r f(x)\|_{L^2}^2
 \le (1+\sigma) \int_M |f(y)|^2 \int_M k_r(x,y)\,dxdy
 \le (1+\sigma)^2 \|f\|_{L^2}^2
$$
since for every $y\in M$ we have
$$
\int_M k_r(x,y)\,dx = \theta(y) \le 1+\sigma
$$ 
by Lemma \ref{l:theta-estimate}. The lemma follows.
\end{proof}

\begin{lemma}
\label{l:Lambdaf-f}
For every $f\in L^2(M)$ one has
\begin{equation} \label{22.2}
 \|\Lambda_rf-f\|^2_{L^2} \le \frac{Cn}{\nu_nr^n} E_r(f) .
\end{equation}
\end{lemma}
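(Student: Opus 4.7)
The plan is to write $\Lambda_rf(x)-f(x)$ as a weighted average of the increments $f(y)-f(x)$ against the kernel $k_r(x,\cdot)$, apply Cauchy--Schwartz to this weighted average, and then integrate in $x$ to recognize $E_r(f)$ on the right-hand side.

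First I would use the identity $\int_M k_r(x,y)\,dy = \theta(x)$ (which is just the definition of $\theta$) to rewrite
$$
\Lambda_r^0 f(x) - \theta(x) f(x) = \int_M (f(y)-f(x))\, k_r(x,y)\,dy,
$$
so that, after dividing by $\theta(x)$,
$$
\Lambda_r f(x) - f(x) = \theta(x)^{-1}\int_M (f(y)-f(x))\, k_r(x,y)\,dy.
$$
This is the key algebraic step that lets the factor $\theta^{-1}$ in Definition \ref{d:smoothening} do its job.

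Next I would apply the Cauchy--Schwartz inequality to the last integral, splitting the kernel $k_r(x,y)$ as $\sqrt{k_r(x,y)}\cdot\sqrt{k_r(x,y)}$. This gives
$$
\Bigl|\int_M (f(y)-f(x))\, k_r(x,y)\,dy\Bigr|^2 \le \theta(x) \int_M |f(y)-f(x)|^2 k_r(x,y)\,dy,
$$
and therefore
$$
|\Lambda_r f(x) - f(x)|^2 \le \theta(x)^{-1}\int_M |f(y)-f(x)|^2 k_r(x,y)\,dy.
$$
By Lemma \ref{l:theta-estimate} the factor $\theta(x)^{-1}$ is bounded by an absolute constant (recall $K\rho^2<1/n^2$ and $r<2\rho$), and by the definition of $k_r$ one has $k_r(x,y)\le Cn/(\nu_n r^n)$ and $k_r(x,y)$ is supported in $B_r(x)$. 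So
$$
|\Lambda_r f(x)-f(x)|^2 \le \frac{Cn}{\nu_n r^n}\int_{B_r(x)}|f(y)-f(x)|^2\,dy.
$$

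Finally, I would integrate over $x\in M$; the right-hand side becomes $\frac{Cn}{\nu_n r^n}E_r(f)$ by the very definition of $E_r(f)$, yielding \eqref{22.2}. There is no real obstacle here; the only thing to keep an eye on is making sure the Cauchy--Schwartz step is performed against the probability-like measure $\theta(x)^{-1} k_r(x,\cdot)$ so that the bound involves $E_r(f)$ without any extra $\|f\|_{L^2}$ term, and that the constants from Lemma \ref{l:theta-estimate} are absorbed into $C$.
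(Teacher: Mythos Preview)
Your proof is correct and follows essentially the same approach as the paper: rewrite $\Lambda_r f(x)-f(x)$ as $\theta(x)^{-1}\int_{B_r(x)}(f(y)-f(x))k_r(x,y)\,dy$ (the paper phrases this as ``$\Lambda_r$ preserves constants''), apply Cauchy--Schwartz against $k_r(x,\cdot)$, bound $\theta^{-1}$ and $k_r$ pointwise, and integrate in~$x$. The only cosmetic difference is that you derive the key identity by a direct computation with $\Lambda_r^0$ rather than invoking the constant-preservation property, but the resulting formula and every subsequent step match the paper exactly.
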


\begin{proof}
We fix a particular function $f\co M\to\R$ representing the given element
of $L^2(M)$, so we can speak about pointwise values of~$f$.
Fix $x\in M$ and let $a=f(x)$.
Since $\Lambda_r$ preserves the constants, we have
$$
\begin{aligned}
 \Lambda_rf(x)- f(x) &= \Lambda_rf(x)- a = \Lambda_r(f-a\cdot 1_M)(x) \\
 &= \theta^{-1}(x) \int_{B_r(x)} (f(y)-a) k_r(x,y)\,dy \\
 &= \theta^{-1}(x) \int_{B_r(x)} (f(y)-f(x)) k_r(x,y)\,dy .
\end{aligned}
$$
By the Cauchy--Schwartz inequality, it follows that
$$
\begin{aligned}
 |\Lambda_rf(x)- f(x)|^2
 &\le \theta^{-2}(x) \left( \int_{B_r(x)} k_r(x,y)\,dy \right)
 \left(\int_{B_r(x)} |f(y)-f(x)|^2 k_r(x,y)\,dy\right) \\
 &=\theta^{-1}(x)\int_{B_r(x)} |f(y)-f(x)|^2 k_r(x,y)\,dy \\
 &\le \frac{Cn}{\nu_nr^n} \int_{B_r(x)} |f(y)-f(x)|^2 
\end{aligned}
$$
since $|\theta^{-1}(x)|\le C$ (cf.\ Lemma \ref{l:theta-estimate})
and $|k_r(x,y)|\le \frac{Cn}{\nu_nr^n}$.
Integrating this inequality yields the result.
\end{proof}

\begin{lemma}
\label{l:dLambdaf}
For every $f\in L^2(X)$ one has
$$
 \|d(\Lambda_rf)\|_{L^2}^2 \le \big(1+Cn^2Kr^2\big)\frac{n+2}{\nu_nr^{n+2}} E_r(f) .
$$
\end{lemma}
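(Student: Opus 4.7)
My goal is a pointwise estimate
\[
|d_x\Lambda_r f|^2 \le (1+Cn^2Kr^2)\,\frac{n+2}{\nu_n r^{n+2}}\,F(x),\qquad F(x):=\int_{B_r(x)}|f(y)-f(x)|^2\,dy,
\]
since $\int_M F = E_r(f)$. (By density it suffices to treat smooth $f$.) As in the proof of Lemma \ref{l:Lambdaf-f}, I exploit that $\Lambda_r$ fixes constants: at a fixed $x$, replacing $f$ by $g(y):=f(y)-f(x)$ does not change $d_x\Lambda_r f$. The product rule applied to $\Lambda_r g=\theta^{-1}\Lambda^0_r g$ gives $d_x\Lambda_r f = B(x)+A(x)$, with
\[
B(x):=\theta(x)^{-1}\,\frac{n+2}{\nu_n r^{n+2}}\int_{B_r(x)}(f(y)-f(x))\exp_x^{-1}(y)\,dy
\]
(via \eqref{e:grad-kr}) and $A(x):=-\theta(x)^{-2}\,d_x\theta\cdot\Lambda^0_r g(x)$.

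The main (and only nontrivial) point is to bound $|B(x)|^2$ with the \emph{sharp} prefactor $\frac{n+2}{\nu_n r^{n+2}}$. Naively applying Cauchy--Schwarz to the vector-valued integral gives $\int_{B_r(0)}|v|^2\,dv=\frac{n\nu_n r^{n+2}}{n+2}$ and costs a stray factor of $n$. I would recover it by testing against an arbitrary unit vector $e\in T_xM$ first and applying Cauchy--Schwarz to the resulting scalar:
\[
\langle e,B(x)\rangle^2 \le \theta(x)^{-2}\Bigl(\tfrac{n+2}{\nu_n r^{n+2}}\Bigr)^2 F(x)\cdot\int_{B_r(x)}\langle e,\exp_x^{-1}(y)\rangle^2\,dy.
\]
Pulling back to $T_xM$, using $J_x\le 1+CnKr^2$ from \eqref{e:rauch-jacobian} and evaluating the resulting Euclidean integral by \eqref{e:integralQ} gives the last factor $\le(1+CnKr^2)\frac{\nu_n r^{n+2}}{n+2}$. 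Combined with $\theta^{-2}\le 1+CnKr^2$ (Lemma \ref{l:theta-estimate}) and taking the supremum over $|e|=1$, this yields $|B(x)|^2\le(1+CnKr^2)\frac{n+2}{\nu_n r^{n+2}}F(x)$.

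For the correction $A(x)$ I would use Lemma \ref{l:theta-estimate} ($|d_x\theta|\le Cn^2Kr$, $\theta^{-2}\le C$) together with the same Cauchy--Schwarz estimate that underlies Lemma \ref{l:Lambdaf-f}:
\[
|\Lambda^0_r g(x)|^2 \le \theta(x)\int_{B_r(x)}|g(y)|^2 k_r(x,y)\,dy \le \frac{Cn}{\nu_n r^n}F(x),
\]
giving $|A(x)|^2\le Cn^5K^2r^2\,F(x)/(\nu_n r^n)\le Cn^4K^2r^4\cdot\frac{n+2}{\nu_n r^{n+2}}F(x)$.

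Finally I would combine the two via $|A+B|^2\le(1+\eps)|B|^2+(1+\eps^{-1})|A|^2$ with $\eps=n^2Kr^2$, invoking the standing assumption $K\rho^2\le 1/n^2$ (hence $Kr^2\lesssim 1/n^2$, so $n^4K^2r^4\lesssim n^2Kr^2$) to absorb both the $(1+\eps)|B|^2$ multiplier and the entire $(1+\eps^{-1})|A|^2$ contribution into $(1+Cn^2Kr^2)\frac{n+2}{\nu_n r^{n+2}}F(x)$. Integration over $M$ completes the proof. The expected obstacle is exactly the sharpness of the Cauchy--Schwarz step for $B(x)$; everything else is bookkeeping with the curvature-induced error bounds already available.
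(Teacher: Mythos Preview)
Your proposal is correct and follows essentially the same approach as the paper: the same decomposition $d_x\Lambda_rf = \theta^{-1}A_1 + A_2$ (your $B$ and $A$), the same sharp Cauchy--Schwarz step for $A_1$ via pairing with a unit vector and using \eqref{e:integralQ}, and the same use of Lemma~\ref{l:theta-estimate} for the correction term. The only cosmetic differences are that the paper applies Cauchy--Schwarz after changing variables to $T_xM$ (you do it on $M$ and then pull back the second factor), and the paper combines the two pieces via the triangle inequality in $L^2$ rather than your pointwise Young inequality; both routes give the same $1+Cn^2Kr^2$ prefactor.
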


\begin{proof}
We fix a particular function $f\co M\to\R$ representing the given element
of $L^2(M)$, so we can speak about pointwise values of~$f$.
Denote $\Lambda_rf$ by $\tilde f$.
For any constant $a\in\R$ we have
$$
 \tilde f(x) = a + \theta^{-1}(x) \int_{B_r(x)} (f(y)-a) k_r(x,y)\,dy
$$
for every $x\in M$.
Differentiating this identity yields
$$
 d_x\tilde f =  \theta^{-1}(x) \int_{B_r(x)} (f(y)-a) d_xk_r(\cdot,y) \,dy 
 + d_x(\theta^{-1})\int_{B_r(x)} (f(y)-a) k_r(x,y) \,dy .
$$
Substituting $a=f(x)$ yields
$$
 d_x\tilde f = \theta^{-1}(x)A_1(x)+A_2(x)
$$
where 
$$
 A_1(x) = \int_{B_r(x)} (f(y)-f(x)) d_xk_r(\cdot,y) \,dy 
$$
and
$$
 A_2(x) =d_x(\theta^{-1})\int_{B_r(x)} (f(y)-f(x)) k_r(x,y) \,dy.
$$
Since $|\theta^{-1}(x)|\le 1+CnKr^2$ for all $x\in M$
(cf.\ Lemma~\ref{l:theta-estimate}),
we have
\be\label{e:detildef}
 \|d\tilde f\|_{L^2} = \|\theta^{-1}A_1+A_2\|_{L^2} \le (1+CnKr^2)\|A_1\|_{L^2}+\|A_2\|_{L^2} .
\ee

Let us first estimate $\|A_2\|_{L^2}$.
By the Cauchy--Schwartz inequality,
$$
\begin{aligned}
 |A_2(x)|^2 
 &\le |d_x(\theta^{-1})|^2
 \left(\int_{B_r(x)} k_r(x,y) \,dy \right)
 \left(\int_{B_r(x)} |f(y)-f(x)|^2 k_r(x,y) \,dy\right) \\
 &= |d_x(\theta^{-1})|^2\, \theta(x)\int_{B_r(x)} |f(y)-f(x)|^2 k_r(x,y) \,dy \\
 &\le \frac{Cn^5K^2}{\nu_nr^{n-2}} \int_{B_r(x)} |f(y)-f(x)|^2 \,dy 
\end{aligned}
$$
since $\theta(x)\le C$, $|d_x(\theta^{-1})|\le Cn^2Kr$ (cf.\ Lemma~\ref{l:theta-estimate})
and $|k_r(x,y)|\le \frac{Cn}{\nu_nr^n}$.
Integrating this inequality yields
$$
 \|A_2\|^2_{L^2} \le \frac{Cn^5K^2}{\nu_nr^{n-2}} E_r(f) .
$$
We rewrite this inequality as follows:
\be\label{e:A2-estimate}
 \|A_2\|_{L^2}
 \le Cn^2Kr^2 \sqrt{\frac{n+2}{\nu_nr^{n+2}} E_r(f)} .
\ee

Now let us estimate $A_1$.
Fix $x\in M$. Recall that
$$
 |A_1(x)| = \max \{ \langle A_1(x),w\rangle : w\in T_xM, |w|=1 \}
$$
where the angle brackets denote the standard pairing of co-vectors and vectors.
Let $w\in T_xM$ be a unit vector realizing this maximum.
Then $|A_1(x)|=\langle A_1(x),w\rangle$.

Plugging the expression \eqref{e:grad-kr}
for the derivative of~$k_r$ into the definition of $A_1$ yields 
$$
\begin{aligned}
 |A_1(x)|=\langle A_1(x),w\rangle 
 &= \frac{n+2}{\nu_nr^{n+2}}\int_{B_r(x)} (f(y)-f(x)) \langle\exp_x^{-1}(y),w\rangle \,dy \\
 &= \frac{n+2}{\nu_nr^{n+2}}\int_{B_r(0)\subset T_xM} \phi(v) \langle v,w\rangle J_x(v) \,dv
\end{aligned}
$$
where $\phi(v)=f(\exp_x(v))-f(x)$.
For brevity, we denote the ball $B_r(0)\subset T_xM$ by~$B$.
By the Cauchy--Schwartz inequality, it follows that
$$
 |A_1(x)|^2 \le \left(\frac{n+2}{\nu_nr^{n+2}}\right)^2
  \left(\int_{B} |\phi(v)|^2 J_x(v)^2\,dv\right)
  \left(\int_{B} \langle v,w\rangle^2\,dv\right) .
$$
Since $|w|=1$, we have
$$
 \frac{n+2}{\nu_nr^{n+2}} \int_{B} \langle v,w\rangle^2\,dv = 1,
$$
hence the above inequality boils down to
$$
\begin{aligned}
 |A_1(x)|^2 
 &\le \frac{n+2}{\nu_nr^{n+2}}
  \int_{B} |\phi(v)|^2 J_x(v)^2\,dv \\
 &= \frac{n+2}{\nu_nr^{n+2}}
  \int_{B_r(x)} |f(y)-f(x)|^2 J_x(\exp_x^{-1}(y))\,dy \\
 &\le \big(1+CnKr^2\big) \frac{n+2}{\nu_nr^{n+2}} \int_{B_r(x)} |f(y)-f(x)|^2 \,dy 
\end{aligned}
$$
where the last inequality follows from the Jacobian estimate \eqref{e:rauch-jacobian}.

Integrating this inequality with respect to $x$ over $M$ yields
$$
 \|A_1\|_{L^2}^2 \le \big(1+CnKr^2\big) \frac{n+2}{\nu_nr^{n+2}} E_r(f) .
$$
This, \eqref{e:detildef} and \eqref{e:A2-estimate} imply that
$$
\begin{aligned}
 \|d\tilde f\|_{L^2} 
 &\le \left( (1+CnKr^2)^{3/2} + Cn^2Kr^2 \right) \sqrt{\frac{n+2}{\nu_nr^{n+2}} E_r(f)} \\
 &\le \left( 1+ Cn^2Kr^2 \right) \sqrt{\frac{n+2}{\nu_nr^{n+2}} E_r(f)} .
\end{aligned}
$$
The lemma follows.
\end{proof}

\section{Interpolation map and lower bound for $\la_k(\Gamma)$}
\label{sec:interpol}

\begin{definition} \label{I}
Define the \textit{interpolation map}
$I\co L^2(X)\to C^{0,1}(M)$ by
$$
 Iu = \Lambda_{\rho-2\ep} P^*u
$$
where $\Lambda_{\rho-2\ep}$ is the smoothening operator
defined in the previous section 
(see Definition~\ref{d:smoothening})
and $P^*\co L^2(X)\to L^2(M)$
is defined in Definition \ref{P}.
\end{definition}

Lemma \ref{l:norm-Lambdaf} and the fact that $P^*$ preserves the norm
imply that
\be\label{e:If-L2}
 \|Iu\|_{L^2} \le (1+CnK\rho^2) \|u\| \le C\|u\|
\ee
for every $u\in L^2(X)$.

\begin{lemma}
\label{l:interpolation}
For every $u\in L^2(X)$ one has
\begin{enumerate}
\item
 $\bigl|\|Iu\|_{L^2} - \|u\|\bigr| \le C\rho\|\delta u\|$;
\item
 $\|d (Iu)\|_{L^2} \le (1+\sigma) \|\delta u\|$
 where $\sigma=Cn^2K\rho^2+Cn\ep/\rho$.
\end{enumerate}
\end{lemma}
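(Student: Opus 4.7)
Since $Iu = \Lambda_r P^*u$ with $r := \rho - 2\ep$, and since $P^*$ preserves the $L^2$ norm (so $\|u\| = \|P^*u\|_{L^2}$), both bounds will follow by applying the smoothening-operator estimates of Section~\ref{sec:smoothening} to $f := P^*u$. The crux is a single inequality bounding $E_r(P^*u)$ by the discrete Dirichlet energy $\|\delta u\|^2$; given that, both parts are one-line applications of Lemmas~\ref{l:Lambdaf-f} and~\ref{l:dLambdaf}.

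To bound $E_r(P^*u)$, I would use that $P^*u$ takes the constant value $u(x_i)$ on each cell $V_i$, so
$$E_r(P^*u) = \sum_{i,j} |u(x_j) - u(x_i)|^2 \int_{V_i} \vol(V_j \cap B_r(x))\,dx.$$
The role of the specific choice $r = \rho - 2\ep$ is that the integrand vanishes unless $x_i \sim x_j$: if $V_j \cap B_r(x) \ne \emptyset$ for some $x \in V_i \subset B_\ep(x_i)$, then $|x_ix_j| \le \ep + r + \ep = \rho$. Combined with $\vol(V_j \cap B_r(x)) \le \mu_j$ and a comparison with the explicit expression \eqref{e:energy} for $\|\delta u\|^2$, this yields
$$E_r(P^*u) \le \sum_{i,j:\,x_i \sim x_j} \mu_i \mu_j |u(x_j) - u(x_i)|^2 = \frac{\nu_n \rho^{n+2}}{n+2}\|\delta u\|^2.$$

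For (1), the triangle inequality gives $\bigl|\|Iu\|_{L^2} - \|u\|\bigr| \le \|\Lambda_r P^*u - P^*u\|_{L^2}$, and Lemma~\ref{l:Lambdaf-f} combined with the bound on $E_r(P^*u)$ produces
$$\|\Lambda_r P^*u - P^*u\|^2_{L^2} \le \frac{Cn}{\nu_n r^n}\cdot \frac{\nu_n\rho^{n+2}}{n+2}\|\delta u\|^2 = C\rho^2 (\rho/r)^n\|\delta u\|^2,$$
and $(\rho/r)^n$ is absorbed into an absolute constant because $\ep/\rho < 1/n$ forces $r \ge \rho(1 - 2/n)$. For (2), direct substitution into Lemma~\ref{l:dLambdaf} yields $\|d(Iu)\|^2_{L^2} \le (1 + Cn^2Kr^2)(\rho/r)^{n+2}\|\delta u\|^2$, and the remaining task is to show $(\rho/r)^{n+2} \le 1 + Cn\ep/\rho$ so as to extract the required $\sigma = Cn^2K\rho^2 + Cn\ep/\rho$.

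The only mildly delicate step is this last expansion: in the regime $n\ep/\rho \ll 1$ the bound is simply a first-order Taylor expansion of $(1 - 2\ep/\rho)^{-(n+2)}$, while when $n\ep/\rho$ is of order~$1$ the left-hand side is bounded above by an absolute constant (again because $\ep/\rho < 1/n$ keeps the exponent $(n+2)\cdot 2\ep/(\rho-2\ep)$ bounded), so the inequality holds for a suitable absolute $C$. Every other step is a direct application of results already proved.
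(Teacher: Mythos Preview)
Your proof is correct and follows essentially the same route as the paper: both derive the key inequality $E_{\rho-2\ep}(P^*u)\le \frac{\nu_n\rho^{n+2}}{n+2}\|\delta u\|^2$ (you via the sum representation of $E_r$ and the observation that only pairs with $x_i\sim x_j$ contribute, the paper via the integral representation of $\|\delta u\|^2$ and the inclusion $U(x)\supset B_{\rho-2\ep}(x)$), and then substitute directly into Lemmas~\ref{l:Lambdaf-f} and~\ref{l:dLambdaf}. One minor point: for $n\le 2$ you also need the standing assumption $\ep/\rho<1/3$ (not just $\ep/\rho<1/n$) to bound $(\rho/r)^n$ by an absolute constant, as the paper does explicitly.
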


\begin{proof}
(1)
Since $\|P^*u\|_{L^2}=\|u\|$, we have
$$
 \bigl|\|Iu\|_{L^2} - \|u\|\bigr| = \bigl|\|Iu\|_{L^2} - \|P^*u\|_{L^2}\bigr|
 \le \|Iu-P^*u\|_{L^2} .
$$
By Lemma \ref{l:Lambdaf-f},
$$
\|Iu-P^*u\|_{L^2}^2 = \|\Lambda_{\rho-2\ep}P^*u-P^*u\|_{L^2}^2
\le \frac{Cn}{\nu_n(\rho-2\ep)^n} E_{\rho-2\ep}(P^*u) .
$$
Since $\ep<\rho/n$ and $\ep<\rho/3$,
we have $(\rho-2\ep)^{-n} \le C \rho^{-n}$, hence
\be\label{e:If-barf}
\|Iu-P^*u\|_{L^2}^2
\le \frac{Cn}{\nu_n\rho^n} E_{\rho-2\ep}(P^*u) .
\ee
Let us estimate $E_{\rho-2\ep}(P^*u)$ in terms of $\delta u$.
By definition, 
$$
\begin{aligned}
\|\delta u\|^2 
&= \frac{n+2}{\nu_n\rho^{n+2}} \sum_i \sum_{j:x_j\sim x_i} \mu_i\mu_j |u(x_j)- u(x_i)|^2 \\
&= \frac{n+2}{\nu_n\rho^{n+2}} \int_M\int_{U(x)} |P^*u(y)-P^*u(x)|^2\,dy\,dx
\end{aligned}
$$
where sets $U(x)\subset M$ are defined as follows:
if $x\in V_i$, then $U(x)=\bigcup_{j:x_j\sim x_i} V_j$.
Since $U(x)\supset B_{\rho-2\ep}(x)$, we have
$$
 \|\delta u\|^2
 \ge \frac{n+2}{\nu_n\rho^{n+2}} \int_M\int_{B_{\rho-2\ep}(x)} |P^*u(y)-P^*u(x)|^2\,dy\,dx
 = \frac{n+2}{\nu_n\rho^{n+2}} E_{\rho-2\ep}(P^*u) .
$$
Thus
\be\label{e:Erho-vs-delta}
E_{\rho-2\ep}(P^*u) \le \frac{\nu_n\rho^{n+2}}{n+2} \|\delta u\|^2 .
\ee
This and \eqref{e:If-barf} imply that
$$
 \|Iu-P^*u\|_{L^2}^2
 \le \frac{Cn}{\nu_n\rho^n}\frac{\nu_n\rho^{n+2}}{n+2} \|\delta u\|^2 
 \le C\rho^2 \|\delta u\|^2 .
$$
Hence
\be
\label{e:If-barf-L2}
 \|Iu-P^*u\|_{L^2} \le C\rho \|\delta u\|
\ee
and the first assertion of the lemma follows.

\medskip
(2) By Lemma \ref{l:dLambdaf},
$$
 \|d(Iu)\|_{L^2}^2 = \|d(\Lambda_{\rho-2\ep}P^*u)\|_{L^2}^2
 \le (1+\sigma_1)\frac{n+2}{\nu_n(\rho-2\ep)^{n+2}} E_{\rho-2\ep}(P^*u) 
$$
where $\sigma_1=Cn^2K\rho^2$.
By \eqref{e:Erho-vs-delta},
$$
 \frac{n+2}{\nu_n(\rho-2\ep)^{n+2}} E_{\rho-2\ep}(P^*u)
 \le \left( \tfrac{\rho}{\rho-2\ep} \right)^n \|\delta u\|^2
 \le (1+\sigma_2) \|\delta u\|^2 .
$$
where $\sigma_2=Cn\ep/\rho$.
Thus
$$
\|d(Iu)\|_{L^2}^2 
\le (1+\sigma_1)(1+\sigma_2) \|\delta u\|^2 
\le (1+\sigma) \|\delta u\|^2
$$
where $\sigma=Cn^2K\rho^2+Cn\ep/\rho$.
The second assertion of the lemma follows.
\end{proof}

\begin{proposition}\label{p:lower-bound}
Let $\la_k=\la_k(M)$, $k\in\N$. Then
$$
\la_k(\Gamma) \ge (1-\delta(\ep,\rho,\la_k))\la_k
$$
where
$$
 \delta(\ep,\rho,\la) = C (n^2K\rho^2+n\ep/\rho+\rho\sqrt\la)
$$
provided that $\rho\sqrt{\la_k}<c_0$.
Here $C$ and $c_0>0$ are absolute constants.
\end{proposition}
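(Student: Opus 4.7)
The plan is to mirror the proof of the upper bound (Proposition \ref{p:upper-bound}), replacing the discretization map $P\co L^2(M)\to L^2(X)$ by the interpolation map $I\co L^2(X)\to C^{0,1}(M)\subset H^1(M)$, and applying the minimax principle on the continuous side rather than the discrete side. Concretely, I would let $L\subset L^2(X)$ be the linear span of eigenvectors of $-\Delta_\Gamma$ corresponding to the first $k$ eigenvalues, so that $\dim L=k$ and $\|\delta u\|^2\le \la\|u\|^2$ for every $u\in L$, where I write $\la=\la_k(\Gamma)$.

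From Lemma~\ref{l:interpolation}(1) applied to $u\in L$,
$$
\|Iu\|_{L^2}\ge \|u\|-C\rho\|\delta u\|\ge (1-C\rho\sqrt\la)\|u\|,
$$
so as soon as $C\rho\sqrt\la<1$ the restriction $I|_L$ is injective and $W:=I(L)\subset H^1(M)$ is a $k$-dimensional subspace. Combining this with Lemma~\ref{l:interpolation}(2), for every nonzero $u\in L$ and $f=Iu$ I obtain
$$
\frac{\|df\|_{L^2}^2}{\|f\|_{L^2}^2}\le \frac{(1+\sigma)\|\delta u\|^2}{(1-C\rho\sqrt\la)^2\|u\|^2}\le \frac{(1+\sigma)\la}{(1-C\rho\sqrt\la)^2},
$$
where $\sigma=Cn^2K\rho^2+Cn\ep/\rho$. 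The minimax characterization of $\la_k(M)$ then yields
$$
\la_k(M)\le \frac{(1+\sigma)\la_k(\Gamma)}{(1-C\rho\sqrt{\la_k(\Gamma)})^2}.
$$

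The remaining step is to invert this inequality, converting a bound involving $\rho\sqrt{\la_k(\Gamma)}$ on the right into one involving $\rho\sqrt{\la_k(M)}$ on the left. If $\la_k(\Gamma)\ge \la_k(M)$ the conclusion is trivial, so I may assume $\la_k(\Gamma)<\la_k(M)$; then $\rho\sqrt{\la_k(\Gamma)}<\rho\sqrt{\la_k(M)}<c_0$, and with $c_0$ chosen small enough I can Taylor-expand the denominator to obtain $\la_k(\Gamma)\ge (1-\sigma-C\rho\sqrt{\la_k(M)})\la_k(M)$, which is the stated bound with $\delta(\ep,\rho,\la)=C(n^2K\rho^2+n\ep/\rho+\rho\sqrt\la)$. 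The main obstacle is not this linear-algebraic maneuver but the two underlying estimates packaged in Lemma~\ref{l:interpolation}: one needs $Iu=\Lambda_{\rho-2\ep}P^*u$ to approximately preserve the $L^2$-norm and to almost decrease the Dirichlet energy, with multiplicative constants arbitrarily close to~$1$. This is where the specific form of the smoothening kernel $\psi$ and the volume-approximation property of $(X,\mu)$ enter critically; without the sharp identity $\int\psi=1$ and the Rauch-type Jacobian control supplied in Section~\ref{sec:smoothening}, the sub-multiplicative factor $(1+\sigma)$ in Lemma~\ref{l:interpolation}(2) could not be made to approach~$1$ as $\ep/\rho,\rho\to 0$.
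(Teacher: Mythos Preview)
Your proof is correct and follows essentially the same route as the paper: both take the span of the first $k$ eigenvectors of $-\Delta_\Gamma$, push it forward by the interpolation map $I$, use Lemma~\ref{l:interpolation} to control the Rayleigh quotient on the image, apply the minimax principle to bound $\la_k(M)$, and then invert the inequality after disposing of the trivial case $\la_k(\Gamma)\ge\la_k(M)$. The only differences are cosmetic (the paper swaps the names $L$ and $W$ and writes $(1-C\rho\sqrt\la)$ rather than its square in the denominator, absorbing the difference into the constant).
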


\begin{proof}
With Lemma \ref{l:interpolation} at hand, the proof is similar
to that of Proposition \ref{p:upper-bound}.
Let $\la=\la_k(\Gamma)$.
We assume that $\la<\la_k(M)$,
otherwise there is nothing to prove.
By the minimax principle,
it suffices to show that there exists a
linear subspace $L\subset H^1(M)$ such that ${\dim L=k}$
and
$$
 \sup_{f\in L\setminus\{0\}} \frac{\|df\|_{L^2}^2}{\|f\|_{L^2}^2} 
 \le  (1-\delta(\ep,\rho,\la))^{-1}\la.
$$
Indeed, this inequality would imply that $\lambda_k(M)\le (1-\delta(\ep,\rho,\la))^{-1}\la$
and therefore
$$
 \lambda \ge (1-\delta(\ep,\rho,\la))\lambda_k\ge (1-\delta(\ep,\rho,\la_k))\lambda_k .
$$
Let $W\subset L^2(X)$ be the linear span of
$k$ orthonormal eigenvectors of $-\Delta_\Gamma$
corresponding to eigenvalues $\la_1(\Gamma),\dots,\la_k(\Gamma)$.
For every $u\in W$, we have $\|\delta u\|^2 \le \la \|u\|^2$.
By Lemma \ref{l:interpolation}(1) it follows that
$$
 \|Iu\|_{L^2} 
 \ge \|u\| - C\rho\|\delta u\|
 \ge  (1-C\rho\sqrt\la) \|u\|
$$
for every $u\in W$.
Hence $I|_W$ in injective if $C\rho\sqrt\la<1$.
Let $L=I(W)$, then $\dim L=k$.
Pick $f\in L\setminus\{0\}$ and let $u\in W$ be such that $f=Iu$.
Then
$$
 \|f\|_{L^2}^2 \ge (1-C\rho\sqrt\la) \|u\|^2
$$
and, by Lemma~\ref{l:interpolation}(2),
$$
 \|df\|_{L^2}^2 \le (1+\sigma_1)\|\delta u\|^2
 \le (1+\sigma_1)\la \|u\|^2 .
$$
where $\sigma_1=1+Cn^2K\rho^2+Cn\ep/\rho$.
Hence
$$
\frac{\|df\|_{L^2}^2}{\|f\|_{L^2}^2}
\le
\frac{(1+\sigma_1)\la}{1-C\rho\sqrt\la}
\le
(1-\delta(\ep,\rho,\la))^{-1}\la
$$
and the proposition follows.
\end{proof}

We conclude this section by showing that the operators $P$ and $I$
are almost inverse to each other
(at bounded energy levels).

\begin{lemma} \label{l:inverse}
{\it 1.} Let $f \in H^1(M)$. Then
$$
\|I P f-f\|_{L^2}\leq C \rho\,\|df\|_{L^2} .
$$
{\it 2.} Let $u \in L^2(X)$. Then
$$
\| P I u-u\| \leq C \rho\,\|\delta u\| .
$$
\end{lemma}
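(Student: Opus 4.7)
\textbf{Plan of proof for Lemma \ref{l:inverse}.}

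For part (1), the strategy is to insert $\Lambda_{\rho-2\ep} f$ between $IPf$ and $f$ and use the triangle inequality:
$$
\|IPf - f\|_{L^2} \le \|\Lambda_{\rho-2\ep}(P^*Pf - f)\|_{L^2} + \|\Lambda_{\rho-2\ep}f - f\|_{L^2} .
$$
The first summand is controlled by Lemma~\ref{l:norm-Lambdaf} (applied to $P^*Pf - f$) combined with Lemma~\ref{l:P*P}, which gives $\|P^*Pf - f\|_{L^2} \le Cn\ep\|df\|_{L^2}$; since $\ep < \rho/n$, this summand is bounded by $C\rho\|df\|_{L^2}$. The second summand is handled by Lemma~\ref{l:Lambdaf-f} and Lemma~\ref{l:Er-df-L2}: the factor $\frac{Cn}{\nu_n (\rho-2\ep)^n}$ from the first cancels the prefactor $\frac{\nu_n}{n+2}(\rho-2\ep)^{n+2}$ from the second, leaving $C\rho^2 \|df\|_{L^2}^2$ after taking square roots. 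Adding the two estimates yields the claim.

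For part (2), the key observation is that $P$ is a left inverse of $P^*$, that is, $PP^* = \mathrm{id}_{L^2(X)}$. Indeed, for each $j$,
$$
PP^*u(x_j) = \mu_j^{-1}\int_{V_j} \sum_i u(x_i)\mathbf{1}_{V_i}(x)\, dx = \mu_j^{-1}\int_{V_j} u(x_j)\, dx = u(x_j) .
$$
Therefore $PIu - u = P(Iu - P^*u)$, and since $P$ is a contraction in the relevant norms (this is the inequality \eqref{e:Pf-L2}), we get
$$
\|PIu - u\| \le \|Iu - P^*u\|_{L^2} .
$$
The right-hand side is precisely what was estimated in the course of proving Lemma~\ref{l:interpolation}(1): the inequality \eqref{e:If-barf-L2} gives $\|Iu - P^*u\|_{L^2} \le C\rho\|\delta u\|$, finishing the proof.

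Both parts are essentially bookkeeping on top of tools already developed, so I do not expect any serious obstacle. The only mildly delicate point is recognizing in part (1) that the error from swapping $P^*Pf$ with $f$ and the error from the smoothing operator $\Lambda_{\rho-2\ep}$ are both separately of order $\rho\|df\|_{L^2}$, and in part (2) noticing the identity $PP^* = \mathrm{id}$, which turns the problem into reusing an estimate already extracted in Section~\ref{sec:interpol}.
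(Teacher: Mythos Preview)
Your proof of part (1) is correct and follows exactly the paper's argument: the same insertion of $\Lambda_{\rho-2\ep}f$, the same use of Lemma~\ref{l:norm-Lambdaf} with Lemma~\ref{l:P*P} for the first summand, and Lemma~\ref{l:Lambdaf-f} with Lemma~\ref{l:Er-df-L2} for the second.

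For part (2), your argument is correct but takes a genuinely different route from the paper's. The paper uses that $P^*$ is isometric to write $\|PIu-u\|=\|P^*PIu-P^*u\|_{L^2}$, inserts $Iu$, and then bounds $\|P^*PIu-Iu\|_{L^2}$ via Lemma~\ref{l:P*P} applied to $Iu\in H^1(M)$; this in turn requires controlling $\|d(Iu)\|_{L^2}$ through Lemma~\ref{l:interpolation}(2). Your observation that $PP^*=\mathrm{id}_{L^2(X)}$ together with the contractivity \eqref{e:Pf-L2} reduces the whole thing in one step to $\|Iu-P^*u\|_{L^2}$, for which \eqref{e:If-barf-L2} applies directly. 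Your approach is more elementary and avoids invoking Lemma~\ref{l:P*P} and the derivative bound for $Iu$; the paper's approach, while slightly longer, has the minor advantage of staying parallel to part (1) by routing through $P^*P$.
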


\begin{proof} {\it 1.}
Let $\bar f=P^*Pf$. By definition,
$$
\|I P f-f\|_{L^2}= \|\Lambda_{\rho-2\ep} \bar f- f\|_{L^2} 
 \leq \|\Lambda_{\rho-2\ep} (\bar f-f)\|_{L^2}+
\|\Lambda_{\rho-2\ep}f -f\|_{L^2}.
$$
Lemma \ref{l:norm-Lambdaf} and Lemma \ref{l:P*P} imply that
$$
 \|\Lambda_{\rho-2\ep}(\bar f - f)\|_{L^2} \leq C \|\bar f- f\|_{L^2} \leq Cn \ep \|df\|_{L^2} .
$$
Next, by Lemma \ref{l:Lambdaf-f} and Lemma \ref{l:Er-df-L2},
$$
\|\Lambda_{\rho-2\ep}f -f\|^2_{L^2} \le \frac{Cn}{\nu_n(\rho-2\ep)^n} E_{\rho-2\ep}(f) \leq C \rho^2 \|df\|^2.
$$
Combining the above inequalities and using the fact that $\ep <\rho/n$,
we obtain the first assertion of the lemma.

{\it 2.} 
Since $P^*$ preserves the norm, we have
$$
\| PIu-u\| =\|P^*(PIu-u)\|
\leq \|P^*PIu- Iu\|_{L^2} + \| Iu - P^*u\|_{L^2} .
$$
By Lemma \ref{l:P*P},
$$
\|P^*PIu- Iu\|_{L^2} \leq Cn \ep \|d (Iu)\|_{L^2}
\leq Cn \ep \|\delta u\| ,
$$
where at the last stage we use Lemma \ref{l:interpolation}(1).
By \eqref{e:If-barf-L2},
$$
 \|Iu-P^*u\|_{L^2} \le C\rho \|\delta u\|
$$
As $\ep <\rho/n$, the above inequalities imply 
the second assertion of the lemma.
\end{proof}

\section{Eigenfunction approximation and proof of theorems}
\label{sec:eigenfunction}

To prove  Theorem \ref{main}, first observe that
the estimate \eqref{main_th} follows from Propositions \ref{p:upper-bound} and \ref{p:lower-bound}.
Next recall that, as follows from \cite{Anderson},
the space $\M$ is pre-compact in Lipschitz topology.
Therefore the eigenvalue $\la_k(M)$ is uniformly bounded for all $M\in\M$,
that is, $\la_k(M)\le C_{\M,k}$.
Using this fact, we obtain the second estimate in Theorem~\ref{main}
from the first one.

To proceed with the eigenfunctions approximations,
we introduce some notation.
For an interval $J\subset\R$, denote by 
$H_J(M)$ the subspace of $H^1(M)$ spanned by
the eigenfunctions with eigenvalues from~$J$.
In particular, $H_{\{\la\}}(M)$ is the eigenspace
associated to an eigenvalue~$\la$.
We abbreviate $H_{(-\infty,\la)}(M)$ by $H_\la(M)$.
We use similar notation $H_J(X)$ and $H_\la(X)$
for subspaces of $L^2(X)$ spanned by eigenvectors
of $-\Delta_\Gamma$.

Note that the dimension of $H_\la(M)$ is uniformly bounded
over $M\in\M$ (for every fixed $\la$), see \cite[Theorem~3]{BeBeGa}.

We denote by $\mathbb P_J$ the orthogonal projector from $L^2(M)$ to $H_J(M)$.
Note that $\mathbb P_J$ does not increase the Dirichlet energy norm.
Similarly to the above notation,
we abbreviate $\mathbb P_{(-\infty,\la)}$ by $\mathbb P_\la$.
We use the same notation $\mathbb P_J$ and $\mathbb P_\la$
for orthogonal projectors from $L^2(X)$ to $H_J(X)$ and $H_\la(X)$.

\begin{lemma}\label{l:inverse-strong}
1. Let $\la>0$ and $f\in H_\la(M)$. Then
$$
\|\delta(Pf)\| \ge (1-\sigma)\|df\|_{L^2} 
$$
where $\sigma=C(\rho\sqrt\la+n^2K\rho^2+n\ep/\rho)$.

2. Let $\la>0$ and $u\in H_\la(X)$. Then
$$
\|d(Iu)\|_{L^2} \ge (1-\sigma)\|\delta u\|
$$
where $\sigma=C(\rho\sqrt\la+nK\rho^2+n\ep/\rho)$
\end{lemma}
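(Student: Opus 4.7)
The plan is to exploit the near-inverse relations between $P$ and $I$ from Lemma~\ref{l:inverse}, together with the fact that the orthogonal projector $\mathbb P_\la$ does not increase Dirichlet energy and that on $H_\la$ the Dirichlet norm is controlled by $\sqrt\la$ times the $L^2$ norm. The two parts are symmetric, so I will sketch only part~1 carefully and indicate the swap for part~2.

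For part~1, let $f\in H_\la(M)$ and set $g=\mathbb P_\la(IPf)\in H_\la(M)$. First, Lemma~\ref{l:interpolation}(2) yields
$$
\|d(IPf)\|_{L^2}\le (1+\sigma_1)\|\delta(Pf)\|,\qquad \sigma_1=Cn^2K\rho^2+Cn\ep/\rho,
$$
and since $\mathbb P_\la$ does not increase $\|d\cdot\|_{L^2}$, this gives the same bound for $\|dg\|_{L^2}$. Next, because $f\in H_\la(M)$, we have $g-f=\mathbb P_\la(IPf-f)$, so
$$
\|g-f\|_{L^2}\le \|IPf-f\|_{L^2}\le C\rho\|df\|_{L^2}
$$
by Lemma~\ref{l:inverse}(1). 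The vector $g-f$ lies in $H_\la(M)$, hence its Dirichlet energy is controlled by $\la$ times its $L^2$ norm:
$$
\|d(g-f)\|_{L^2}\le \sqrt\la\,\|g-f\|_{L^2}\le C\rho\sqrt\la\,\|df\|_{L^2}.
$$
Combining, the triangle inequality gives
$$
\|df\|_{L^2}\le \|dg\|_{L^2}+\|d(g-f)\|_{L^2}\le (1+\sigma_1)\|\delta(Pf)\|+C\rho\sqrt\la\,\|df\|_{L^2}.
$$
Rearranging (which is legitimate as long as $C\rho\sqrt\la<1$, ensured by the smallness hypotheses on $\rho\sqrt\la$) yields
$$
\|\delta(Pf)\|\ge \frac{1-C\rho\sqrt\la}{1+\sigma_1}\,\|df\|_{L^2}\ge (1-\sigma)\|df\|_{L^2}
$$
with $\sigma=C(\rho\sqrt\la+n^2K\rho^2+n\ep/\rho)$, as required.

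Part~2 is the mirror image. For $u\in H_\la(X)$, set $v=\mathbb P_\la(PIu)\in H_\la(X)$, use Lemma~\ref{l:P-properties}(2) to bound $\|\delta(PIu)\|\le(1+\sigma_2)\|d(Iu)\|_{L^2}$ with $\sigma_2=Cn(K\rho^2+\ep/\rho)$, note that the projector does not increase $\|\delta\cdot\|$, invoke Lemma~\ref{l:inverse}(2) to get $\|PIu-u\|\le C\rho\|\delta u\|$, and then use the eigenvalue bound $\|\delta(v-u)\|\le\sqrt\la\,\|v-u\|$ available on $H_\la(X)$. The triangle inequality together with rearrangement gives the claim with $\sigma=C(\rho\sqrt\la+nK\rho^2+n\ep/\rho)$; the $n$ versus $n^2$ discrepancy with part~1 simply tracks the fact that here we invoke Lemma~\ref{l:P-properties}(2) rather than Lemma~\ref{l:interpolation}(2).

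The only subtle point is the need for a spectral bound on $g-f$ (resp.\ $v-u$), which is exactly where the hypothesis $f\in H_\la(M)$ (resp.\ $u\in H_\la(X)$) is used: it forces the difference after projection to live in the low-spectrum subspace, so that the gap between $L^2$ and Dirichlet norms can be closed by a factor of $\sqrt\la$. Without this restriction one could not upgrade the $L^2$ closeness of $IPf$ and $f$ (an estimate that costs a $\rho$) to an $H^1$ closeness (which requires an extra $\sqrt\la$).
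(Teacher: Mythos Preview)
Your proof is correct and follows essentially the same approach as the paper: both arguments project $IPf$ onto $H_\la(M)$, use that the projector does not increase Dirichlet energy, invoke Lemma~\ref{l:inverse} to control $\|IPf-f\|_{L^2}$, use the spectral bound $\|d(\cdot)\|_{L^2}\le\sqrt\la\,\|\cdot\|_{L^2}$ on $H_\la(M)$, and finish with Lemma~\ref{l:interpolation}(2) (resp.\ Lemma~\ref{l:P-properties}(2) for part~2). The only difference is cosmetic ordering of the steps.
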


\begin{proof}
{\it 1.}
First we are going to estimate $\|d(IPf)\|_{L^2}$ from below
in terms of $\|df\|_{L^2}$.
Since the projector $\mathbb P_\lambda\co L^2(M)\to H_\lambda(M)$
does not increase the Dirichlet energy,
$$
 \|d(IPf)\|_{L^2} \ge \|d(\mathbb P_\la IPf)\|_{L^2}
 \ge \|df\|_{L^2} - \|d(\mathbb P_\la IPf-f)\|_{L^2} .
$$
Since $f\in H_\la(M)$, we have
$$
\begin{aligned}
\|d(\mathbb P_\la IPf-f)\|_{L^2} &= \|d(\mathbb P_\la (IPf-f))\|_{L^2}
\le \sqrt\la \|\mathbb P_\la (IPf-f)\|_{L^2} \\
&\le \sqrt\la \|IPf-f\|_{L^2}  \le C\rho\sqrt\la \,\|df\|_{L^2}.
\end{aligned}
$$
where the first inequality follows from the fact that
$\|dg\|_{L^2} \le \sqrt\la \|g\|_{L^2}$ for every $g\in H_\la(M)$
and the last one from Lemma \ref{l:inverse}.
Thus
$$
 \|d(IPf)\|_{L^2} \ge (1-\sigma_1)\|df\|_{L^2}
$$
where $\sigma_1=C\rho\sqrt\la$.
By Lemma \ref{l:interpolation}(2),
$$
 \|d(IPf)\|_{L^2} \le (1+\sigma_2) \|\delta(Pf)\|
$$
where $\sigma_2=C(n^2K\rho^2+n\ep/\rho)$.
Thus
$$
 \|\delta(Pf)\| \ge (1+\sigma_2)^{-1}(1-\sigma_1)\|df\|_{L^2}
$$
and the first assertion of the lemma follows.

{\it 2.}
The proof of the second assertion is completely similar.
Just interchange the roles of $M$ and $X$ and use
Lemma \ref{l:P-properties}(2) rather than Lemma \ref{l:interpolation}(2)
at the final step.
\end{proof}

We need the following simple lemma from linear algebra.

\begin{lemma}
\label{l:supQ-Q'}
Let $L$ be a finite-dimensional Euclidean space and $k=\dim L$.
Let $Q$ and $Q'$ be quadratic forms on $L$
and $\la_1\le\dots\le\la_k$ and $\la'_1\le\dots\la'_k$
their respective eigenvalues.
Suppose that $Q\ge Q'$. Then
$$
 \sup_{v\in L, \|v\|=1} \{ Q(v)-Q'(v) \} \le k\max_{1\le j\le k} \{\la_j-\la'_j\} .
$$
\end{lemma}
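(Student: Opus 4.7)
The plan is to reformulate the bound in terms of the self-adjoint operator $A-A'$ on $L$ that represents the quadratic form $Q-Q'$, and then estimate the top eigenvalue of $A-A'$ by its trace, which equals the sum $\sum_j(\la_j-\la'_j)$ by the minimax characterization.

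More precisely, I would proceed as follows. Let $A, A'\co L\to L$ be the self-adjoint operators with $Q(v)=\langle Av,v\rangle$ and $Q'(v)=\langle A'v,v\rangle$, and set $B=A-A'$. The hypothesis $Q\ge Q'$ means that $B$ is positive semidefinite, so its eigenvalues $\mu_1\le\dots\le\mu_k$ are all nonnegative. The quantity on the left-hand side of the inequality is exactly the top eigenvalue $\mu_k$, and since all $\mu_i\ge 0$,
$$
 \sup_{\|v\|=1}\{Q(v)-Q'(v)\} = \mu_k \le \sum_{i=1}^k \mu_i = \trace(B) = \trace(A)-\trace(A') = \sum_{j=1}^k(\la_j-\la'_j).
$$

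Next, I would observe that each summand $\la_j-\la'_j$ is nonnegative: the minimax principle applied to $Q\ge Q'$ gives $\la_j\ge\la'_j$ for every $j$. Therefore the sum of these nonnegative numbers is bounded above by $k$ times the largest of them:
$$
 \sum_{j=1}^k(\la_j-\la'_j) \le k \max_{1\le j\le k}\{\la_j-\la'_j\},
$$
which combined with the previous chain of inequalities yields the claim.

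There is no real obstacle here; this is a short linear-algebra fact. The only subtlety worth noting is that one must use the sign information $\mu_i\ge 0$ twice: once to pass from $\mu_k$ to $\trace(B)$, and once (via minimax) to conclude that each $\la_j-\la'_j\ge 0$ so that replacing a sum by $k$ times a maximum is legitimate. Without the hypothesis $Q\ge Q'$, neither step would work.
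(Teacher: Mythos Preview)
Your proof is correct and follows essentially the same route as the paper: identify the left-hand side with the top eigenvalue of $Q-Q'$, bound it by the trace since $Q-Q'\ge 0$, and compute the trace as $\sum_j(\la_j-\la'_j)$. Your version is slightly more explicit in justifying the final step $\sum_j(\la_j-\la'_j)\le k\max_j(\la_j-\la'_j)$ via the minimax inequality $\la_j\ge\la'_j$, which the paper leaves implicit.
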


\begin{proof}
The left-hand side is the largest eigenvalue of the quadratic form $Q-Q'$.
Since $Q-Q'$ is nonnegative, its largest eigenvalue is bounded above
by it trace. On the other hand,
$$
 \trace(Q-Q')  = \trace(Q)-\trace(Q') = \sum_{j=1}^k (\la_j-\la'_j)
 \le k\max_{1\le j\le k} \{\la_j-\la'_j\} ,
$$
hence the result.
\end{proof}

We fix orthonormal eigenfunctions $\{f_k\}_{k=1}^\infty$ of $-\Delta_M$
and orthonormal eigenvectors $\{u_k\}_{k=1}^N$ of $-\Delta_X$.

\begin{lemma}
\label{l:highfreq}
1. Let $\la=\la_k(M)$. Then for every $a>0$,
$$
 \|Pf_k - \mathbb P_{\la+a}Pf_k\|^2 \le C_{\M,k}a^{-1}(\rho+\ep/\rho)
$$
and
$$
 \|\delta(Pf_k - \mathbb P_{\la+a}Pf_k)\|^2 \le C_{\M,k}(1+a^{-1})(\rho+\ep/\rho)
$$
provided that $\rho+\ep/\rho< C_{\M,k}^{-1}$.

2. Let $\la=\la_k(X)$. Then for every $a>0$,
$$
 \|Iu_k - \mathbb P_{\la+a}Iu_k\|^2_{L^2} \le C_{\M,k}a^{-1}(\rho+\ep/\rho)
$$
and
$$
 \|d(Iu_k - \mathbb P_{\la+a}Iu_k)\|^2_{L^2} \le C_{\M,k}(1+a^{-1})(\rho+\ep/\rho)
$$
provided that $\rho+\ep/\rho< C_{\M,k}^{-1}$.
\end{lemma}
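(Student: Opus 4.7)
The plan is to analyze $g := Pf_k$ through its expansion $g = \sum_j c_j u_j$ in the $L^2(X)$-orthonormal $-\Delta_\Gamma$-eigenbasis; set $\la_j := \la_j(\Gamma)$ and $\la := \la_k(M)$. The target quantities are then
\[
 \|g - \mathbb P_{\la+a}g\|^2 = \!\!\sum_{\la_j \ge \la+a}\!\! c_j^2,
 \qquad \|\delta(g - \mathbb P_{\la+a}g)\|^2 = \!\!\sum_{\la_j \ge \la+a}\!\! \la_j c_j^2.
\]
Since $\la_j-\la\ge a$ on the tail, a Chebyshev-type estimate gives
$a\,\|g - \mathbb P_{\la+a}g\|^2 \le Y_+$ where $Y_+ := \sum_{\la_j>\la}(\la_j-\la)c_j^2$, so estimate 1a reduces to showing $Y_+ \le C_{\M,k}(\rho+\ep/\rho)$.

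The algebraic identity $Y_+ - Y_- = \|\delta g\|^2 - \la\|g\|^2$, with $Y_- := \sum_{\la_j<\la}(\la-\la_j)c_j^2 \ge 0$, is controlled by combining (i)~the upper bound $\|\delta g\|^2 \le (1+\sigma)\la$ of Lemma~\ref{l:P-properties}(2), (ii)~the matching lower bound $\|\delta g\|^2 \ge (1-\sigma)^2\la$ of Lemma~\ref{l:inverse-strong}(1) (applicable since $f_k\in H_{\la+\varepsilon}(M)$ for every $\varepsilon>0$), and (iii)~the norm estimate $\|g\|^2 \in [(1-\sigma)^2,1]$ from Lemma~\ref{l:P-properties}(1); using the uniform bound $\la_k\le C_{\M,k}$ on $\M$, these yield $|Y_+ - Y_-|\le C_{\M,k}(\rho+\ep/\rho)$. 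The remaining task, and the main obstacle, is bounding $Y_-$ (equivalently $\|\mathbb P_{<\la}g\|^2$). I would handle this by induction on $k$: polarizing Lemma~\ref{l:P-properties}(1) gives the near-orthonormality $|\langle Pf_i,Pf_j\rangle - \delta_{ij}|\le C_{\M,k}\ep$ for $i\ne j$, and the inductive hypothesis locates each $Pf_j$ with $j<k$ within $O(\sqrt{\rho+\ep/\rho})$ of the low-frequency subspace of $-\Delta_\Gamma$. Together these make $\{Pf_j\}_{j<k}$ an approximate orthonormal basis for (an approximation of) $H_{<\la}(X)$, and the orthogonality $\langle Pf_k, Pf_j\rangle\approx 0$ forces $\|\mathbb P_{<\la}Pf_k\|^2 \le C_{\M,k}(\rho+\ep/\rho)$, hence $Y_- \le \la\,\|\mathbb P_{<\la}g\|^2 \le C_{\M,k}(\rho+\ep/\rho)$.

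Estimate 1b follows immediately from
\[
 \|\delta(g - \mathbb P_{\la+a}g)\|^2 = \!\!\sum_{\la_j\ge\la+a}\!\!(\la_j-\la)c_j^2 + \la\,\|g - \mathbb P_{\la+a}g\|^2 \le Y_+ + \la\cdot C_{\M,k}a^{-1}(\rho+\ep/\rho),
\]
combined with the bound on $Y_+$ from step~2 and the estimate 1a. Part~2 is proved by the symmetric argument: expand $Iu_k$ in the $-\Delta_M$-eigenbasis and substitute Lemmas~\ref{l:interpolation}, \ref{l:inverse}(2), \ref{l:inverse-strong}(2) for Lemmas~\ref{l:P-properties}, \ref{l:inverse}(1), \ref{l:inverse-strong}(1), respectively. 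The hard part in both cases is the inductive bound on $Y_-$ when $\la=\la_k(M)$ is a multiple eigenvalue or is separated from $\la_{k-1}(M)$ only by a small gap; cleanly handling this requires grouping the nearly-degenerate eigenvalues together and invoking both the uniform dimension bound $\dim H_\la(M)\le C_{\M,k}$ and the quantitative spectral convergence of Theorem~\ref{main} to pin down how many $\la_j(\Gamma)$ lie close to $\la$.
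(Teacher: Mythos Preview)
Your approach differs from the paper's and has a genuine gap at the $Y_-$ bound. You estimate $Y_- \le \la\,\|\mathbb P_{<\la}g\|^2$ and then claim $\|\mathbb P_{<\la}Pf_k\|^2 \le C_{\M,k}(\rho+\ep/\rho)$ via induction. But this last inequality is false in general: when $\la_{k-1}(M)=\la_k(M)=\la$, the discrete eigenvector $u_{k-1}$ can lie (approximately) anywhere in the two-plane near $P(\operatorname{span}(f_{k-1},f_k))$, so $|\langle Pf_k,u_{k-1}\rangle|$ may be of order~$1$ while $\la_{k-1}(\Gamma)$ sits just below~$\la$. Your inductive hypothesis only places $Pf_j$ (for $j<k$) near $H_{\la_j(M)+a}(X)$, and when $\la_j(M)=\la$ that space already contains $u_k$; it does \emph{not} force $\{Pf_j\}_{j<k}$ to span $\operatorname{span}(u_1,\dots,u_{k-1})$. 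The ``grouping'' fix you sketch would, at best, introduce a dependence on the nearest nonzero gap $\la_k(M)-\la_{k-m}(M)$ below~$\la$, and this gap is not bounded below uniformly over~$\M$; the lemma's constant $C_{\M,k}$ is required to be gap-independent.

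The paper sidesteps the issue entirely. Rather than analyzing $Pf_k$ alone, it works on the whole $k$-dimensional subspace $L=P(\operatorname{span}(f_1,\dots,f_k))$ and compares the discrete Dirichlet form $Q$ with the modified form
\[
 Q'(u)=Q(\mathbb P_{\la+a}u)+\la\,\|u-\mathbb P_{\la+a}u\|^2,
\]
which caps the high-frequency contribution at level~$\la$. The minimax principle together with Theorem~\ref{main} pins each of the $k$ eigenvalues of $Q|_L$ and of $Q'|_L$ to within $C_{\M,k}(\rho+\ep/\rho)$ of $\la_j(M)$, and then the trace bound of Lemma~\ref{l:supQ-Q'} converts this into the pointwise estimate $Q(u)-Q'(u)\le C_{\M,k}(\rho+\ep/\rho)\|u\|^2$ for every $u\in L$. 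Since $Q(u)-Q'(u)=\sum_{\la_j(\Gamma)\ge\la+a}(\la_j(\Gamma)-\la)c_j^2$, this is exactly the tail quantity you need, obtained \emph{without} ever splitting into $Y_+$ and $Y_-$. The crucial point is that the trace argument aggregates information from all of $f_1,\dots,f_k$ simultaneously, which is what makes the constant independent of how the eigenvalues cluster.
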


\begin{proof}
{\it 1.} Let $W$ be the linear span of $f_1,\dots,f_k$
and $L=P(W)\subset L^2(X)$.
As in the proof of Proposition~\ref{p:upper-bound},
we have $\dim L=k$ if $\rho+\ep/\rho$ is sufficiently small.
Let $Q$ denote the discrete Dirichlet energy form on $L^2(X)$,
and let $\la_1^L\le\dots\le\la_k^L$ be the eigenvalues of $Q|_L$
(with respect to the Euclidean structure on $L$ defined by the
restriction of the $L^2(X)$ norm).

Recall that $\lambda_k(M)\le C_{\M,k}$.
This and Lemma~\ref{l:P-properties} imply that
for every $f\in W$,
$$
  (1-\sigma)\|f\|_{L^2} \le \|Pf\| \le (1+\sigma)\|f\|_{L^2}
$$
and
$$
    \|\delta(Pf)\|  \le (1+\sigma)\|df\|_{L^2}
$$
where $\sigma=C_{\M,k}(\rho+\ep/\rho)$.
By the minimax principle it follows that
\be\label{e:la^L}
 \la_j^L \le \bigl(\tfrac{1+\sigma}{1-\sigma}\bigr)^2 \la_j(M)
 \le \la_j(M) + C_{\M,k}(\rho+\ep/\rho)
\ee
for all $j\le k$, provided that $\sigma<1/2$.

Now define another quadratic form $Q'$ on $L^2(X)$ by
$$
 Q'(u) = Q(\mathbb P_{\lambda+a}(u)) + \la\|u-\mathbb P_{\lambda+a}(u)\|^2 .
$$
Clearly $Q'\le Q$.
The eigenvectors $u_1,u_2,\dots$ of $Q$ are also eigenvectors of $Q'$ and the corresponding
eigenvalues are $\la_1(\Gamma),\la_2(\Gamma),\dots,\la_m(\Gamma),\la,\la,\dots$,
where $m$ is the largest integer such that $\la_m(\Gamma)<\la+a$.
Therefore for every $j\le m$ and every $j$-dimensional subspace
$V\subset L^2(X)$ we have
\be\label{e:Q'}
 \sup_{v\in V\setminus\{0\}} \frac{Q'(v)}{\|v\|^2} \ge \min\{\la,\la_j(\Gamma)\} .
\ee
Indeed, $V$ has a nontrivial intersection with
the orthogonal complement of the linear span of $u_1,\dots,u_{j-1}$,
and any vector $v$ from this intersection
satisfies $Q'(v) \ge \min\{\la,\la_j(\Gamma)\}\|v\|^2$.
Let $\la'_1\le\dots\le\la'_k$ be the eigenvalues of $Q'|_L$
(with respect to the restriction of the $L^2(X)$ norm to~$L$).
Then \eqref{e:Q'} and the minimax principle imply that
$\la'_j\ge\min\{\la,\la_j(\Gamma)\}$ for all $j\le k$.
By Theorem~\ref{main} it follows that
$$
 \la'_j \ge \la_j(M) - C_{\M,k}(\rho+\ep/\rho) .
$$
and hence, by \eqref{e:la^L},
$$
 \la_j^L-\la'_j \le C_{\M,k}(\rho+\ep/\rho) 
$$
for all $j\le k$. This and Lemma \ref{l:supQ-Q'} imply that
\be\label{e:Qu-Q'u}
 Q(u)-Q'(u) \le C_{\M,k}(\rho+\ep/\rho) \|u\|^2
\ee
for every $u\in L$.

Let $u\in L$ and $u'=u-\mathbb P_{\la+a}u$.
Since $Q'(\mathbb P_{\la+a}u)=Q(\mathbb P_{\la+a}u)$, we have
\be\label{e:Qu-Q'u2}
 Q(u)-Q'(u) = Q(u')-Q'(u') = Q(u')-\la\|u'\|^2
 \ge\frac a{\la+a} Q(u')
\ee
where the last inequality follows from the fact that
$Q(u')\ge(\la+a)\|u'\|^2$
since  $u'\in H_{[\la+a,+\infty)}(X)$.
Now \eqref{e:Qu-Q'u} and \eqref{e:Qu-Q'u2} imply that
$$
 Q(u') \le \frac{\la+a}{a} C_{\M,k}(\rho+\ep/\rho)\|u\|^2
$$
and therefore
$$
 \|u'\|^2 \le (\la+a)^{-1} Q(u') 
 \le a^{-1}C_{\M,k}(\rho+\ep/\rho)\|u\|^2 .
$$
Substituting $u=Pf_k$ into the last two inequalities
and taking into account that $\|Pf_k\| \le 1+\sigma <2$
yields the first assertion of the lemma.

{\it 2.}
The proof of the second assertion is similar.
Just interchange the roles of $M$ and $X$
and use Lemma~\ref{l:interpolation}
rather than Lemma \ref{l:P-properties}.
\end{proof}

\begin{lemma}
\label{l:eigenspace}
1. Let $\la=\la_k(M)$ and let $\alpha,\beta,\gamma>0$ be such that
$\alpha\le\beta\le\gamma\le 1$ and
the interval $(\la+\alpha,\la+\beta)$ does not contain eigenvalues of $-\Delta_\Gamma$.
Then 
$$
 \| Pf_k - \mathbb P_{(\la-\gamma,\la+\alpha]}Pf_k\|^2
 \le C\alpha\gamma^{-1} + C_{\M,k}\beta^{-1}\gamma^{-1}(\rho+\ep/\rho)
$$
provided that $\rho+\ep/\rho< C_{\M,k}^{-1}$.

2. Let $\la=\la_k(\Gamma)$ and let $\alpha,\beta,\gamma>0$ be such that
$\alpha\le\beta\le\gamma\le 1$ and
the interval $(\la+\alpha,\la+\beta)$ does not contain eigenvalues of $-\Delta_M$.
Then 
$$
 \| Iu_k - \mathbb P_{(\la-\gamma,\la+\alpha]}Iu_k\|^2_{L^2}
 \le C\alpha\gamma^{-1} + C_{\M,k}\beta^{-1}\gamma^{-1}(\rho+\ep/\rho)
$$
provided that $\rho+\ep/\rho< C_{\M,k}^{-1}$.
\end{lemma}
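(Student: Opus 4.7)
The plan is to decompose $Pf_k=u_{\mathrm{low}}+v+u_{\mathrm{high}}$ along the three eigenvalue ranges
$(-\infty,\la-\gamma]$, $(\la-\gamma,\la+\alpha]$, $(\la+\alpha,\infty)$
of $-\Delta_\Gamma$, and bound the off-target pieces $\|u_{\mathrm{low}}\|^2$ and $\|u_{\mathrm{high}}\|^2$ separately. The high-frequency part is easy: the spectral gap hypothesis gives $u_{\mathrm{high}}=Pf_k-\mathbb{P}_{\la+\beta}Pf_k$, so Lemma~\ref{l:highfreq}(1) applied with $a=\beta$ yields $\|u_{\mathrm{high}}\|^2\le C_{\M,k}\beta^{-1}(\rho+\ep/\rho)$, which is subsumed by the claimed bound.

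The main task is to handle the low part. The idea is to transfer the $L^2(X)$-orthogonal projection defining $u_{\mathrm{low}}$ into an approximate $L^2(M)$-orthogonality of $Iu_{\mathrm{low}}$ with $f_k$, via Lemma~\ref{l:highfreq}(2). Enumerate the eigenvectors $u_j=u_j(\Gamma)$ with $\la_j(\Gamma)\le\la-\gamma$ as $j=1,\dots,m$. I may assume $\gamma\ge C_{\M,k}(\rho+\ep/\rho)$, since otherwise the claimed right-hand side already exceeds the trivial bound $\|Pf_k\|^2\le 1$. Under this assumption Theorem~\ref{main} forces $\la_j(M)<\la$ for all relevant $j$, and a standard counting argument (using the uniform dimension bound on $H_\la(M)$ over $\M$) bounds $m$ by a constant depending only on $\M$ and $k$. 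For each such $j$ I apply Lemma~\ref{l:highfreq}(2) to $u_j$ with parameter $a=(\la-\la_j(\Gamma))/2\ge\gamma/2$. Because $\la_j(\Gamma)+a<\la$, the projection $\mathbb{P}_{\la_j(\Gamma)+a}Iu_j$ lies in $H_\la(M)$, and the lemma delivers $\|Iu_j-\mathbb{P}_\la Iu_j\|_{L^2}^2\le C_{\M,k}\gamma^{-1}(\rho+\ep/\rho)$. Writing $u_{\mathrm{low}}=\sum_{j\le m}c_j u_j$ and summing via Cauchy--Schwarz over the at-most-$C_{\M,k}$ contributing indices yields
\[
\|Iu_{\mathrm{low}}-\mathbb{P}_\la Iu_{\mathrm{low}}\|_{L^2}^2\le C_{\M,k}\gamma^{-1}(\rho+\ep/\rho)\,\|u_{\mathrm{low}}\|^2 .
\]
Now $f_k\in H_{\{\la\}}(M)$ is orthogonal to $H_\la(M)$, so $|\langle f_k,Iu_{\mathrm{low}}\rangle_{L^2}|\le\|Iu_{\mathrm{low}}-\mathbb{P}_\la Iu_{\mathrm{low}}\|_{L^2}$. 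Combining this with the adjointness identity $\|u_{\mathrm{low}}\|^2=\langle Pf_k,u_{\mathrm{low}}\rangle=\langle f_k,P^*u_{\mathrm{low}}\rangle_{L^2}$ and the estimate $\|P^*u_{\mathrm{low}}-Iu_{\mathrm{low}}\|_{L^2}\le C\rho\|\delta u_{\mathrm{low}}\|\le C_{\M,k}\rho\|u_{\mathrm{low}}\|$ from~\eqref{e:If-barf-L2} gives
\[
\|u_{\mathrm{low}}\|^2\le\sqrt{C_{\M,k}\gamma^{-1}(\rho+\ep/\rho)}\,\|u_{\mathrm{low}}\|+C_{\M,k}\rho\,\|u_{\mathrm{low}}\| ,
\]
which rearranges to $\|u_{\mathrm{low}}\|^2\le C_{\M,k}\gamma^{-1}(\rho+\ep/\rho)$. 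Adding the high-frequency bound and using $\beta^{-1}+\gamma^{-1}\le 2(\beta\gamma)^{-1}$ (valid since $\beta,\gamma\le 1$) establishes part~1.

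Part~2 is proved by the completely symmetric argument: interchange the roles of $M$ and $X$, of $P$ and $I$, and of the two assertions of Lemma~\ref{l:highfreq}, invoking Lemma~\ref{l:inverse}(2) in place of the adjointness step and Lemma~\ref{l:interpolation} in place of Lemma~\ref{l:P-properties}.

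The principal obstacle is the low-frequency bound above; the high part is essentially free from Lemma~\ref{l:highfreq}(1). The key bridge is Lemma~\ref{l:highfreq}(2), which converts ``low energy of $u_{\mathrm{low}}$ in the graph Dirichlet form'' into ``concentration of $Iu_{\mathrm{low}}$ on the genuine low-frequency subspace $H_\la(M)$''. Once $Iu_{\mathrm{low}}$ is placed almost inside $H_\la(M)$, the orthogonality $f_k\perp H_\la(M)$ finishes the job.
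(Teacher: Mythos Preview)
Your argument is correct, but it follows a genuinely different route from the paper's for the low-frequency piece. Both proofs use the same orthogonal decomposition $Pf_k=u_-+u_0+u_+$ and both handle $u_+$ via Lemma~\ref{l:highfreq}(1) with $a=\beta$. For $u_-$, however, the paper argues by an \emph{energy balance}: it invokes Lemma~\ref{l:inverse-strong}(1) to get $Q(Pf_k)\ge(1-\sigma)\la$, combines this with the upper bounds $Q(u_0)\le(\la+\alpha)\|u_0\|^2$ and $Q(u_-)\le(\la-\gamma)\|u_-\|^2$, and solves for $\|u_-\|^2$; this is where the term $C\alpha\gamma^{-1}$ in the statement arises. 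Your approach is a \emph{duality/orthogonality} argument: you push each low graph eigenvector $u_j$ through $I$ and use Lemma~\ref{l:highfreq}(2) (with $a\ge\gamma/2$) to show $Iu_{\mathrm{low}}$ lies almost in $H_\la(M)$, then exploit $f_k\perp H_\la(M)$ together with the adjointness $\langle Pf_k,u_{\mathrm{low}}\rangle=\langle f_k,P^*u_{\mathrm{low}}\rangle$ and \eqref{e:If-barf-L2}. This yields $\|u_-\|^2\le C_{\M,k}\gamma^{-1}(\rho+\ep/\rho)$ directly, with no $\alpha$-dependence at all---so your final estimate is actually a bit sharper than what is stated (the $C\alpha\gamma^{-1}$ term is simply absent). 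The cost is that you must first bound the number $m$ of low graph eigenvalues by $k-1$ via Theorem~\ref{main}, which requires the preliminary reduction to $\gamma\ge C_{\M,k}(\rho+\ep/\rho)$; the paper's energy argument avoids this step. One small remark on your part~2 sketch: the ``adjointness step'' there is not literally replaced by Lemma~\ref{l:inverse}(2); rather, you again use $\langle P^*u_k,f_{\mathrm{low}}\rangle=\langle u_k,Pf_{\mathrm{low}}\rangle$ together with \eqref{e:If-barf-L2} applied to $u_k$ to pass from $\langle Iu_k,f_{\mathrm{low}}\rangle$ to $\langle u_k,Pf_{\mathrm{low}}\rangle$, and then Lemma~\ref{l:highfreq}(1) (applied to the manifold eigenfunctions $f_j$ with $\la_j(M)\le\la_k(\Gamma)-\gamma$) to place $Pf_{\mathrm{low}}$ near $H_{\la_k(\Gamma)}(X)\perp u_k$.
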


\begin{proof}
{\it 1.}
As in the previous lemma, we denote the discrete Dirichlet energy form by~$Q$.
Let $u=Pf_k$.
Decompose $u$ into the sum of three orthogonal vectors
$u=u_0+u_-+u_+$ where $u_0\in H_{(\la-\gamma,\la+\alpha]}(X)$,
$u_-\in H_{(-\infty,\la-\gamma]}(X)$
and $u_+\in H_{(\la+\alpha,+\infty)}(X)$.
Note that $u_+\in H_{[\la+\beta,+\infty)}(X)$
due to our assumption about eigenvalues of $-\Delta_\Gamma$.
Applying Lemma~\ref{l:highfreq} with $\beta$ in place of $a$
yields that
\be\label{e:u+}
 \|u_+\|^2 \le C_{\M,k}\beta^{-1}(\rho+\ep/\rho)
\ee
and
$$
 Q(u_+) \le C_{\M,k}\beta^{-1}(\rho+\ep/\rho) .
$$
By Lemma~\ref{l:inverse-strong},
$$
 Q(u) = \|\delta(Pf_k)\|^2 \ge (1-\sigma_1)\|df_k\|^2_{L^2} = (1-\sigma_1) \la
$$
where $\sigma_1=C_{\M,k}(\rho+\ep/\rho)$.
Therefore
$$
 Q(u_0)+Q(u_-) = Q(u)-Q(u_+) \ge \la-\sigma_2
$$
where $\sigma_2=C_{\M,k}\beta^{-1}(\rho+\ep/\rho)$.
On the other hand,
$$
 Q(u_0) \le (\la+\alpha) \|u_0\|^2
$$
and
$$
 Q(u_-) \le (\la-\gamma) \|u_-\|^2,
$$
hence
$$
 \la-\sigma_2 \le Q(u_0)+Q(u_-)
 \le \la (\|u_0\|^2+\|u_-\|^2) + \alpha\|u_0\|^2 - \gamma \|u_-\|^2 .
$$
Observe that
$$
\|u_0\|^2
\le \|u_0\|^2+\|u_-\|^2
\le \|u\|^2 = \|Pf_k\|^2 \le 1+\sigma_3
$$
for $\sigma_3=C_{\M,k}(\rho+\ep/\rho)$,
where the last inequality follows from Lemma~\ref{l:P-properties}(1).
Thus
$$
 \la-\sigma_2 \le \la(1+\sigma_3) + \alpha(1+\sigma_3) -  \gamma\|u_-\|^2 ,
$$
or, equivalently
$$
 \|u_-\|^2 \le \gamma^{-1}(\sigma_2+\la\sigma_3) + \alpha\gamma^{-1}(1+\sigma_3) .
$$
The right-hand side is bounded by 
$C_{\M,k}\gamma^{-1}\beta^{-1}(\rho+\ep/\rho)+C\alpha\gamma^{-1}$.
This and \eqref{e:u+} yield the first assertion of the lemma.

\textit{2.}
The proof of the second assertion is similar, with the roles of $M$ and $X$
interchanged.
\end{proof}

\begin{theorem}
\label{t:eigenspace}
1. Let $\la=\la_k(M)$ and let $f_k$ be a corresponding unit-norm
eigenfunction of $-\Delta_M$. Then for every $\gamma\in(0,1)$,
$$
\| Pf_k - \mathbb P_{(\la-\gamma,\la+\gamma)}Pf_k\|^2
 \le C_{\M,k}\gamma^{-2}(\rho+\ep/\rho)^{1/2}
$$
provided that $\rho+\ep/\rho<C_{\M,k}^{-1}$.

2. Let $\la=\la_k(\Gamma)$ and let $u_k$ be a corresponding unit-norm
eigenfunction of $-\Delta_\Gamma$. Then for every $\gamma\in(0,1)$,
$$
\| Iu_k - \mathbb P_{(\la-\gamma,\la+\gamma)}Iu_k\|^2_{L^2}
 \le C_{\M,k}\gamma^{-2}(\rho+\ep/\rho)^{1/2}
$$
provided that $\rho+\ep/\rho<C_{\M,k}^{-1}$.
\end{theorem}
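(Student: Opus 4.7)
My plan is to deduce both parts of the theorem from Lemma~\ref{l:eigenspace} via a pigeonhole argument combined with an optimised choice of the gap parameters $\alpha,\beta$. I describe Part~1; Part~2 is analogous after swapping the roles of $M$ and~$\Gamma$.

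The first ingredient is a uniform bound on the number of eigenvalues of $-\Delta_\Gamma$ inside $[\la,\la+\gamma]$. Since $\la=\la_k(M)\le C_{\M,k}$, the bound on $\dim H_\la(M)$ from \cite{BeBeGa} (or equivalently Weyl's law) combined with Theorem~\ref{main} yields a constant $K=C_{\M,k}$ such that
$$
\#\{j:\la_j(\Gamma)\in[\la,\la+\gamma]\}\le K
$$
provided $\rho+\ep/\rho<C_{\M,k}^{-1}$. Write $\eta=\rho+\ep/\rho$ and set $h=c_{\M,k}\gamma\eta^{1/2}$ with $c_{\M,k}$ chosen small enough that $(K+2)h\le\gamma$; this is guaranteed by taking $\eta$ sufficiently small. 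Partition the interval $[\la+h,\la+(K+2)h]$ into $K+1$ consecutive intervals of length~$h$. By pigeonhole at least one of them, say $(\la+\alpha,\la+\beta)$ with $\alpha=jh$ for some $j\in\{1,\dots,K+1\}$ and $\beta=\alpha+h$, contains no eigenvalue of $-\Delta_\Gamma$. The parameters satisfy $h\le\alpha\le(K+1)h$ and $\beta=\alpha+h\le(K+2)h\le\gamma\le 1$, so Lemma~\ref{l:eigenspace}(1) is applicable.

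Plugging these $\alpha,\beta$ into the lemma gives
$$
\|Pf_k-\mathbb P_{(\la-\gamma,\la+\alpha]}Pf_k\|^2
\le C\alpha\gamma^{-1}+C_{\M,k}\beta^{-1}\gamma^{-1}\eta
\le C_{\M,k}\eta^{1/2}+C_{\M,k}\gamma^{-2}\eta^{1/2}
\le C_{\M,k}\gamma^{-2}\eta^{1/2},
$$
where the last inequality uses $\gamma\le 1$ to absorb the first summand. Since $\alpha<\gamma$, the range of $\mathbb P_{(\la-\gamma,\la+\alpha]}$ is contained in the range of $\mathbb P_{(\la-\gamma,\la+\gamma)}$, so
$$
\|Pf_k-\mathbb P_{(\la-\gamma,\la+\gamma)}Pf_k\|\le\|Pf_k-\mathbb P_{(\la-\gamma,\la+\alpha]}Pf_k\|,
$$
completing Part~1. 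For Part~2 the same argument works with $M$ and $\Gamma$ interchanged: the eigenvalue count in $[\la,\la+\gamma]$ for $-\Delta_M$ is again uniformly bounded by $C_{\M,k}$ directly from \cite{BeBeGa}, and one invokes Lemma~\ref{l:eigenspace}(2) in place of~(1).

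The main obstacle is establishing the uniform constant $K=C_{\M,k}$ for the number of eigenvalues in the window; once that is in hand, the remaining steps are just a pigeonhole selection of a gap and a single application of Lemma~\ref{l:eigenspace} with the gap size $h\sim\gamma\eta^{1/2}$ tuned to balance the two error terms of that lemma.
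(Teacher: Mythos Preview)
Your argument is correct but considerably more elaborate than what is needed. The paper's proof is a one-line trick: in Lemma~\ref{l:eigenspace} simply take $\alpha=\beta=(\rho+\ep/\rho)^{1/2}\gamma$. Then the open interval $(\la+\alpha,\la+\beta)$ is empty, so the hypothesis that it contain no eigenvalues is vacuously satisfied, and no pigeonhole or eigenvalue-counting is required at all. With this choice one has $\alpha\gamma^{-1}=(\rho+\ep/\rho)^{1/2}$ and $\beta^{-1}\gamma^{-1}(\rho+\ep/\rho)=\gamma^{-2}(\rho+\ep/\rho)^{1/2}$, which immediately gives the stated bound; the passage from $\mathbb P_{(\la-\gamma,\la+\alpha]}$ to $\mathbb P_{(\la-\gamma,\la+\gamma)}$ is the same monotonicity step you already used.

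Your route instead invests effort in finding a \emph{genuine} spectral gap via pigeonhole, which in turn forces you to bound the number of graph eigenvalues in $[\la,\la+\gamma]$ uniformly over~$\M$. That bound is obtainable (combine the uniform control of $\dim H_{\la+2}(M)$ from \cite{BeBeGa} with Proposition~\ref{p:lower-bound} applied at index $K'+1$ to push all higher $\la_j(\Gamma)$ above $\la+1$), but it is an unnecessary detour: the degenerate choice $\alpha=\beta$ sidesteps the gap condition entirely and yields the same final estimate with the same balance $h\sim\gamma(\rho+\ep/\rho)^{1/2}$ of the two error terms. In short, you have rediscovered the optimal scale for $\alpha$ and $\beta$, but missed that Lemma~\ref{l:eigenspace} already permits $\alpha=\beta$.
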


\begin{proof}
Plug $\alpha=\beta=(\rho+\ep/\rho)^{1/2}\gamma$ into Lemma \ref{l:eigenspace}.
Since the interval $(\la+\alpha,\la+\beta)$ is empty, the assumption
about eigenvalues is satisfied automatically.
The desired estimates follows from Lemma \ref{l:eigenspace}
and the relations $\alpha<\gamma$, $\alpha\gamma^{-1}=(\rho+\ep/\rho)^{1/2}$
and $\beta^{-1}=\gamma^{-1}(\rho+\ep/\rho)^{-1/2}$.
\end{proof}

The next theorem provides somewhat sharper estimates
(which are however not uniform over $\M$)
in terms of spectral gaps.

\begin{theorem}
\label{t:eigenfunction}
Let $\la$ be an eigenvalue of $-\Delta_M$ of multiplicity $m$,
more precisely,
$$
 \la_{k-1}<\la_k=\la=\la_{k+m-1} < \la_{k+m} .
$$
where $\la_j=\la_j(M)$.
Let $\delta_\la=\min\{1,\la_k-\la_{k-1}, \la_{k+m}-\la_{k+m-1}\}$
and assume that $\rho+\ep/\rho<C_{\M,k}^{-1}\delta_\la$.
Let $u_k,\dots,u_{k+m-1}$ be orthonormal eigenvectors of $-\Delta_\Gamma$
corresponding to eigenvalues $\la_k(\Gamma),\dots,\la_{k+m-1}(\Gamma)$.

Then there exist orthonormal eigenfunctions $g_k,\dots,g_{k+m-1}$ of $-\Delta_M$
corresponding to the eigenvalue $\la$ and such that
\be\label{e:uj-Pgj}
 \|u_j - Pg_j\|^2 \le C_{\M,k}\delta_\la^{-2}(\rho+\ep/\rho)
\ee
and
\be\label{e:gj-Iuj}
 \|g_j - Iu_j\|^2_{L^2} \le C_{\M,k}\delta_\la^{-2}(\rho+\ep/\rho)
\ee
for all $j=k,\dots,k+m-1$.
\end{theorem}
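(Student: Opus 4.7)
The plan is to construct a linear map $T\colon H_{\{\la\}}(M)\to H$ where $H\subset L^2(X)$ is the span of the given eigenvectors $u_k,\ldots,u_{k+m-1}$, prove that $T$ is close to an isometry between these two $m$-dimensional inner product spaces, and then use polar decomposition to produce an exact isometry $U$ close to $T$. The desired orthonormal eigenfunctions will be $g_j:=U^{-1}u_j$. The first task is to pin down the spectral cluster: by Theorem~\ref{main} every $\la_j(\Gamma)$ with $j=k,\ldots,k+m-1$ lies within $\sigma:=C_{\M,k}(\rho+\ep/\rho)$ of $\la$, while $\la_{k-1}(\Gamma)$ and $\la_{k+m}(\Gamma)$ are a distance at least $\delta_\la/2$ away provided $\sigma<\delta_\la/10$. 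Choosing $\gamma=\beta=\delta_\la/2$ and $\alpha=2\sigma$ in Lemma~\ref{l:eigenspace}(1) makes $(\la-\gamma,\la+\alpha]$ contain exactly this cluster (so $\mathbb P_{(\la-\gamma,\la+\alpha]}$ is the orthogonal projector $\mathbb P_H$ onto $H$) and leaves $(\la+\alpha,\la+\beta)$ eigenvalue-free, as required by the hypothesis of the lemma.

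Define $Tf:=\mathbb P_H(Pf)$. Applied to any unit eigenfunction in $H_{\{\la\}}(M)$, Lemma~\ref{l:eigenspace}(1) with the above parameters yields $\|Tf-Pf\|^2\le C_{\M,k}\delta_\la^{-2}(\rho+\ep/\rho)$, since with our choice the dominant term $\beta^{-1}\gamma^{-1}(\rho+\ep/\rho)$ is of exactly this size. Combining this with Lemma~\ref{l:P*P}, which gives $|\langle Pf_i,Pf_j\rangle-\delta_{ij}|\le C_{\M,k}\ep$ for any orthonormal basis $\{f_i\}$ of $H_{\{\la\}}(M)$, and using that $\dim H_{\{\la\}}(M)$ is bounded by $C_{\M,k}$, an entrywise estimate of the Gram matrix of $\{Tf_i\}$ shows
\[
 \|T^*T-\mathrm{Id}\|_{\mathrm{op}}\le C_{\M,k}\delta_\la^{-1}(\rho+\ep/\rho)^{1/2}.
\]
The positive square root $A=(T^*T)^{1/2}$ then differs from $\mathrm{Id}$ by the same amount, so the polar decomposition $T=UA$ produces an isometry $U\colon H_{\{\la\}}(M)\to H$ with $\|T-U\|_{\mathrm{op}}\le C_{\M,k}\delta_\la^{-1}(\rho+\ep/\rho)^{1/2}$. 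Setting $g_j:=U^{-1}u_j$ gives an orthonormal set of eigenfunctions in $H_{\{\la\}}(M)$, and the triangle inequality $\|Pg_j-u_j\|\le\|Pg_j-Tg_j\|+\|Tg_j-Ug_j\|$, with both terms bounded by $C_{\M,k}\delta_\la^{-1}(\rho+\ep/\rho)^{1/2}$, yields~\eqref{e:uj-Pgj} after squaring.

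For \eqref{e:gj-Iuj}, split $\|g_j-Iu_j\|_{L^2}\le\|g_j-IPg_j\|_{L^2}+\|I(Pg_j-u_j)\|_{L^2}$; the first summand is bounded by $C\rho\|dg_j\|_{L^2}=C\rho\sqrt{\la}\le C_{\M,k}\rho$ via Lemma~\ref{l:inverse}(1), the second by $C\|Pg_j-u_j\|$ via~\eqref{e:If-L2}, and squaring combined with~\eqref{e:uj-Pgj} gives the result (since $\rho^2\le\rho+\ep/\rho$ for small parameters). The main subtlety is the basis-matching step: Lemma~\ref{l:eigenspace} only compares $Pf$ to its own orthogonal projection onto $H$, so the individual vectors $u_j$ are not directly matched to images of specific eigenfunctions. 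The polar decomposition is precisely the tool that promotes the near-isometry $T$ to an honest isometry $U$, thereby pairing each $u_j$ with a coherent choice of $g_j\in H_{\{\la\}}(M)$.
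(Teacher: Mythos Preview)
Your argument is correct, but it proceeds in the opposite direction from the paper's proof. The paper applies Lemma~\ref{l:eigenspace}(2) to each $u_j$, obtaining $\tilde g_j:=\mathbb P_{\{\la\}}Iu_j$ directly in $H_{\{\la\}}(M)$ with $\|Iu_j-\tilde g_j\|_{L^2}^2\le C_{\M,k}\delta_\la^{-2}(\rho+\ep/\rho)$; it then orthonormalizes the $\tilde g_j$ by Gram--Schmidt and deduces \eqref{e:uj-Pgj} from \eqref{e:gj-Iuj} via Lemma~\ref{l:inverse}. You instead push the eigenfunctions forward by $P$, invoke Lemma~\ref{l:eigenspace}(1), and use the polar decomposition of $T=\mathbb P_H\circ P|_{H_{\{\la\}}(M)}$ to produce the matching isometry, deriving \eqref{e:gj-Iuj} from \eqref{e:uj-Pgj}. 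The polar-decomposition step is a cleaner, basis-free substitute for Gram--Schmidt, and it yields the same bounds; conversely, the paper's route avoids building the auxiliary map $T$ and works one eigenvector at a time.

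One small point to make explicit: Lemma~\ref{l:eigenspace}(1) is stated only for the particular eigenfunction $f_k$, whereas you apply it to an arbitrary unit $f\in H_{\{\la\}}(M)$ (in particular to $g_j$, which is not one of the fixed $f_i$). This is harmless---the proof of Lemma~\ref{l:eigenspace}(1), via Lemma~\ref{l:highfreq} and Lemma~\ref{l:inverse-strong}, goes through verbatim for any unit eigenfunction of eigenvalue $\la$ once one takes $W=\operatorname{span}\{f_1,\dots,f_{k+m-1}\}$ in the proof of Lemma~\ref{l:highfreq}---but it should be said.
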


\begin{proof}
By Theorem~\ref{main}, the constant $C_{\M,k}$ in the bound for $\rho+\ep/\rho$
can be chosen so that $|\la_j(\Gamma)-\la_j(M)|<\frac14\delta_\la$ for all $j\le k+m$.
For every $j=k,\dots,k+m-1$, apply the second part of Lemma~\ref{l:eigenspace}
with $j$ in place of $k$, $\la'=\la_j(\Gamma)$ in place of $\la$,
$\alpha=2|\la'-\la|$ and $\beta=\gamma=\frac12\delta_\la$.
We have
$$
\la-\delta_\la<\la'-\gamma<\la<\la'+\alpha<\la'+\beta<\la+\delta_\la ,
$$
therefore the assumptions of Lemma~\ref{l:eigenspace} are satisfied
and
$$
H_{(\la'-\gamma,\la'+\alpha]}(M)=H_{\{\la\}}(M)
=\operatorname{span}\{f_k,\dots,f_{k+m-1}\} .
$$
Thus Lemma~\ref{l:eigenspace} yields that
\be\label{e:eigenfunction1}
 \|Iu_j - \tilde g_j\|^2_{L^2}
 \le C|\la'-\la|\delta_\gamma^{-1} +C_{\M,k}\delta_\gamma^{-2}(\rho+\ep/\rho)
 \le C_{\M,k}\delta_\gamma^{-2}(\rho+\ep/\rho)
\ee
where $\tilde g_j=\mathbb P_{\{\la\}}Iu_j$.
Here the second inequality follows from the fact that
$|\la'-\la|<C_{\M,k}(\rho+\ep/\rho)$ by Theorem~\ref{main}.

By Lemma \ref{l:interpolation}(1), $I$ is almost isometric
(up to an error term $C_{\M,k}\rho$)
on the linear span of $u_k,\dots,u_{k+m-1}$.
This and \eqref{e:eigenfunction1} imply that
the functions $\tilde g_k,\dots,\tilde g_{k+m-1}$
are almost orthonormal up to $C_{\M,k}\delta_\gamma^{-2}(\rho+\ep/\rho)$.
Let $\{g_j\}_{j=k}^{k+m-1}$ be the Gram--Schmidt orthogonalization
of $\{\tilde g_j\}_{j=k}^{k+m-1}$, then
$$
\|g_j-\tilde g_j\|_{L^2} \le C_{\M,k}\delta_\gamma^{-2}(\rho+\ep/\rho) .
$$ 
This and \eqref{e:eigenfunction1} imply \eqref{e:gj-Iuj}.
Now \eqref{e:uj-Pgj} follows from \eqref{e:gj-Iuj}
and Lemma \ref{l:inverse}.
\end{proof}

Note that the functions $g_k,\dots,g_{k+m-1}$ constructed in
Theorem \ref{t:eigenfunction} depend on~$\Gamma$
in rather unpredictable way. The theorem only implies that
the subspace generated by $Iu_k,\dots,Iu_{k+m-1}$
converges to $H_{\{\la\}}(M)$ as $\rho+\ep/\rho\to 0$.
A fixed basis $f_k,\dots,f_{k+m-1}$ of  $H_{\{\la\}}(M)$
is approximated by vectors $Iu_k,\dots,Iu_{k+m-1}$
transformed by an $m\times m$ orthogonal matrix.

In the case of multiplicity $m=1$,
the eigenfunction $g_k$ is unique (up to a sign) and therefore
Theorem \ref{t:eigenfunction} implies Theorem \ref{eigenfunction_1}.

\section{Volume approximation}
\label{sec:volume}

This section supplements the main results of the paper.
Here we consider various aspects of 
volume approximation in the sense Definition~\ref{d:vol-approx}.

\begin{lemma}[Marriage lemma for measures]
\label{l:marriage}
Let $X\subset M$ be a finite set.
A measure $\mu$ on $X$ is an $\ep$-approximation for $\vol$ 
(in the sense of Definition \ref{d:vol-approx}) if and only if $\vol(M)=\mu(X)$
and for every $Y\subset X$ one has $\mu(Y)\le \vol(U_\ep(Y))$.
By $U_\ep$ we denote the $\ep$-neighborhood of a set.
\end{lemma}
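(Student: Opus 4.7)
The plan is to prove the two directions separately. The ``only if'' direction is immediate from Definition~\ref{d:vol-approx}: given the partition $\{V_i\}$ one has $\vol(M)=\sum_i \vol(V_i)=\sum_i \mu_i=\mu(X)$, and for any $Y\subset X$ we have $\bigsqcup_{x_i\in Y}V_i\subset U_\ep(Y)$, whence $\mu(Y)=\sum_{x_i\in Y}\vol(V_i)\le \vol(U_\ep(Y))$.

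For the ``if'' direction, I would reduce the problem to the finite Hall--K\"onig max-flow theorem via the following discretization. Let $\mathcal{A}$ be the finite Boolean algebra of subsets of $M$ generated by the balls $\{B_\ep(x_i)\}_{i=1}^N$. Its atoms $A_1,\dots,A_K$ (with $K\le 2^N$) form a measurable partition of $M$, and each atom $A_j$ has the key property that for every $i$, either $A_j\subset B_\ep(x_i)$ or $A_j\cap B_\ep(x_i)=\emptyset$. Set $S_j=\{i\co A_j\subset B_\ep(x_i)\}$.

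Next I would consider the bipartite transportation problem with supplies $\vol(A_j)$ at the atoms, demands $\mu_i$ at the points $x_i$, and an edge (of unlimited capacity) from $A_j$ to $x_i$ whenever $i\in S_j$. Total supply equals total demand by the assumption $\vol(M)=\mu(X)$. The max-flow/min-cut theorem (equivalently, Hall-type feasibility for capacitated bipartite transportation) asserts that a saturating flow $\{f_{ji}\}$ exists if and only if, for every $T\subset\{1,\dots,N\}$, the total supply at the atoms adjacent to some $i\in T$ is at least $\sum_{i\in T}\mu_i$. But the union of atoms adjacent to $T$ is precisely $U_\ep(Y)$ for $Y=\{x_i\co i\in T\}$, so this cut condition coincides verbatim with the hypothesis $\mu(Y)\le\vol(U_\ep(Y))$.

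Given such a flow $\{f_{ji}\}$, I would then build the required partition by subdividing each atom $A_j$ measurably into disjoint pieces $\{A_{j,i}\}_{i\in S_j}$ with $\vol(A_{j,i})=f_{ji}$, and setting $V_i=\bigsqcup_j A_{j,i}$. The inclusion $V_i\subset B_\ep(x_i)$ is forced by the definition of $S_j$, and $\vol(V_i)=\sum_j f_{ji}=\mu_i$. The only potentially delicate point is this final measurable subdivision of each atom, but it is routine since $\vol$ is nonatomic on $M$ (one may invoke Lyapunov's convexity theorem, or explicitly use a measurable bijection of the atom with an interval). The real heart of the argument is thus the combinatorial max-flow/min-cut step, and I do not foresee any substantive obstacle beyond checking that the at most $2^N$ cut inequalities of the bipartite problem are exactly the $2^N$ Hall-type inequalities that we are given.
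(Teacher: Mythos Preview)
Your proof is correct and takes a genuinely different route from the paper's. The paper proves the ``if'' direction by induction on $N=|X|$: it builds a continuous one-parameter family of coverings $\{V_i(t)\}$ interpolating between the balls $B_\ep(x_i)$ and a disjoint partition, locates the first time $t_0$ at which the Hall inequality \emph{turns into equality} for some proper subset $I_0$, and then splits $M$ into the piece $M'=\bigcup_{i\in I_0}V_i(t_0)$ and its complement, applying the induction hypothesis to each. Your argument instead discretizes the problem in one shot via the atoms of the Boolean algebra generated by the balls and invokes max-flow/min-cut (equivalently, the feasibility criterion for a capacitated bipartite transportation problem) to obtain the allocation directly. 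Your approach is shorter and more conceptual, at the cost of citing max-flow/min-cut as a black box; the paper's argument is self-contained and also serves to establish the inductive machinery it reuses later in the section. Incidentally, your argument only needs $\vol$ to be nonatomic, whereas the paper's deformation uses continuity of $r\mapsto\vol(B_r(x))$; so your route is marginally more general.
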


\begin{proof}
The proof is similar to that of Hall's Lemma for bipartite graphs.
The ``only if'' implication trivially follows from the definition.

We prove the ``if'' part by induction in $N=|X|$.
To carry on the induction, we are proving a more general
assertion where $M$ is not necessarily a manifold but
a metric measure space where the volume of a ball is positive and depends
continuously on its radius.
(In particular, this implies that every sphere has zero measure.)
Note that the requirement of Definition \ref{d:vol-approx}
that $X$ is an $\ep$-net 
follows from the assumption $\mu(Y)\le \vol(U_\ep(Y))$ applied to $Y=X$
and the fact that $\vol(M)=\mu(X)$.

Let $X=\{x_i\}_{i=1}^N$. The case $N=1$ is trivial.
Suppose that $N>1$ and the assertion holds for
every metric measure space $M'$ (with the above property)
and every subset $X'\subset M'$ of cardinality less than~$N$.
We construct a family $\{V_i(t)\}_{i=1}^N$, $t\in[0,T]$,
of coverings of $M$ by measurable sets $V_i(t)$ such that
\begin{enumerate}
 \item $V_i(0)=B_\ep(x_i)$, and $V_i(t)\subset B_\ep(x_i)$ for all $t$;
 \item the sets $V_i(T)$ are disjoint;
 \item For any set $I\subset\{1,\dots,N\}$, the volume
of the set $\bigcup_{i\in I} V_i(t)$ depends continuously on~$t$.
\end{enumerate}
Informally, to construct this family, we continuously remove from
each $V_i$ some pieces of $V_j$, $j>i$. Formally, we set $T=N$ and
sequentially define
the family for $t\in[0,1]$, $t\in[1,2]$, \dots, $t\in[N-1,N]$,
in such a way that only $V_i(t)$ changes on the interval $[i-1,i]$.
Assuming that the family is already defined for $t=i-1$, we set
$$
 V_i(i-t') = V_i^0 \cup B_{\ep t'}(x_i)
$$
for all $t'\in[0,1]$, where $V_i^0$ is the set of points in the set $V_i(i-1)$
that do not belong to any of the sets $V_j(i-1)$, $j\ne i$.

If $\vol(V_i(T))=\mu_i$ for all $i$, then the sets $V_i=V_i(T)$
satisfy Definition \ref{d:vol-approx}. Otherwise consider
a maximal interval $[0,t_0]$ such that
\be\label{e:marriage1}
 \vol \left(\bigcup\nolimits_{i\in I} V_i(t)\right) \ge \sum\nolimits_{i\in I} \mu_i
\ee
for every set $I\subset\{1,\dots,N\}$.
By continuity, such a $t_0$ exists and the inequality \eqref{e:marriage1}
turns into equality for $t=t_0$ and some set $I=I_0\subsetneq\{1,\dots,N\}$.
(Note that \eqref{e:marriage1} is always satisfied for $I=\{1,\dots,N\}$
since the sets $V_i(t)$ cover $M$ for every~$t$.)

Let $M'=\bigcup_{i\in I_0} V_i(t_0)$ and $M''=M\setminus M'$.
We apply the induction hypothesis to the spaces $M'$ and $M''$
with respective sets $X'=\{x_i\}_{i\in I_0}$ and $X''=\{x_i\}_{i\notin I_0}$,
equipped with the restrictions of $\vol$ and $\mu$.
For $M'$ and $X'$,
the assumption that $\vol(M'\cap U_\ep(Y))\ge \mu(Y)$ for all $Y\subset X'$
trivially follows from \eqref{e:marriage1}.
For $M''$ and $X''$, we verify the assumption by contradiction.
Suppose that
$$
 \vol ( M'' \cap  U_\ep(Y) ) < \mu(Y)
$$
for some $Y\subset X''$.
Let $Y=\{x_i\}_{i\in J}$ where $J\subset \{1,\dots,N\}\setminus I_0$.
Consider the set $I=J\cup I_0$. For this set we have
$$
  \vol \left(\bigcup\nolimits_{i\in I} V_i(t_0)\right)
  \le \vol(M') + \vol ( M'' \cap  U_\ep(Y) ) < \vol(M') + \mu(Y) .
$$
By the choice of $t_0$ and $I_0$, we have
$$
\vol(M') = \vol\left(\bigcup\nolimits_{i\in I_0} V_i(t_0)\right)
=\sum\nolimits_{i\in I_0} \mu_i
$$
and $\mu(Y)=\sum_{i\in J} \mu_i$ by definition.
Thus
$$
\vol \left(\bigcup\nolimits_{i\in I} V_i(t_0)\right)< \vol(M') + \mu(Y)
=\sum\nolimits_{i\in I} \mu_i ,
$$
contrary to \eqref{e:marriage1}.
This contradiction proves that $M''$ and $X''$
satisfy the induction hypothesis.
Thus $X'$ and $X''$ (equipped with the restrictions of $\mu$)
$\ep$-approximate $M'$ and $M''$ (equipped with the restriction of $\vol$).
Hence $(X,\mu)$ $\ep$-approximates $(M,\vol)$.
\end{proof}

Recall that the \textit{Prokhorov distance} \cite{Prokhorov}
$\pi(\mu,\nu)$ between two finite Borel measures $\mu$ and $\nu$ on $M$
is the infimum of all $r>0$ such that
$$
 \mu(A)\le \nu(U_r(A))+r \qquad\text{and}\qquad \nu(A)\le \mu(U_r(A))+r
$$
for every measurable set $A\subset M$.
It is well-known that weak convergence of measures
is equivalent to convergence with respect to the Prokhorov distance.
Let us introduce a similar distance $\pi_0(\mu,\nu)$, which makes sense only
if $\mu(M)=\nu(M)$. It is defined as follows: $\pi_0(\mu,\nu)$ is the infimum of all $r>0$ such that
$$
 \mu(A)\le \nu(U_r(A)) \qquad\text{and}\qquad \nu(A)\le \mu(U_r(A))
$$
for every measurable set $A\subset M$.
Clearly $\pi(\mu,\nu)\le \pi_0(\mu,\nu)$. 

Unlike Prokhorov distance, $\pi_0$ is hardly useful for
general metric measure spaces. However, in our situation $M$ is a Riemannian
manifold and one the measures is its Riemannian volume $\vol$. In this case it is
more convenient to work with $\pi_0$ and it defines the same notion of convergence to~$\vol$. 
Indeed, the following lemma holds.

\begin{lemma}
\label{l:prokhorov}
Let $\mu$ be a Borel measure on $M$ such that $\mu(M)=\vol(M)$.
Then
$$
 \pi(\mu,\vol)\le \pi_0(\mu,\vol) \le C_\M \pi(\mu,\vol)^{1/n} .
$$
\end{lemma}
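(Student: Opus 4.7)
The first inequality $\pi(\mu,\vol)\le\pi_0(\mu,\vol)$ is immediate from the definitions, since any $r$ witnessing $\pi_0(\mu,\vol)\le r$ \emph{a fortiori} satisfies the weaker defining inequalities for $\pi$. The substantive statement is the second inequality. My plan is to fix $r=\pi(\mu,\vol)$, set $r'=r+2(r/c_\M)^{1/n}$ for a suitable constant $c_\M=c_\M(n,K,i_0)>0$, and check that $r'$ witnesses the two defining inequalities of $\pi_0$. For small $r$ this is bounded by $C_\M r^{1/n}$; for $r$ bounded away from zero (beyond the range in which Rauch comparison applies) the conclusion is trivial since $\pi_0(\mu,\vol)\le\diam(M)\le C_\M$ and this can be absorbed into the constant.

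The key geometric step is the following annulus lemma: if $A\subset M$ is closed and $U_{r'}(A)\ne M$, then $\vol(U_{r'}(A)\setminus U_r(A))\ge r$. To prove it, observe that the continuous function $x\mapsto d(x,A)$ takes the value $0$ on $A$ and some value $\ge r'$ at any point outside $U_{r'}(A)$, so by the intermediate value theorem there exists $y_0\in M$ with $d(y_0,A)=(r+r')/2$. The triangle inequality then shows that every $z\in B_{(r'-r)/2}(y_0)$ satisfies $r<d(z,A)<r'$, so this entire ball lies inside the annulus. The Rauch Jacobian bound from Section~\ref{sec:riem-prelim} then gives $\vol(B_{(r'-r)/2}(y_0))\ge c_\M\bigl((r'-r)/2\bigr)^n$, which equals $r$ by the choice of $r'$.

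With the annulus lemma in hand, the first $\pi_0$ inequality follows directly. If $U_{r'}(A)=M$, then $\mu(A)\le\mu(M)=\vol(M)=\vol(U_{r'}(A))$; otherwise the defining property of $\pi$ combined with the annulus lemma gives $\mu(A)\le\vol(U_r(A))+r\le\vol(U_{r'}(A))$. One may reduce to closed $A$ since $U_{r'}(\bar A)=U_{r'}(A)$ and $\vol(\bar A)\ge\vol(A)$. For the dual inequality $\vol(A)\le\mu(U_{r'}(A))$, I would use $\mu(M)=\vol(M)$ to rewrite it as $\mu(B)\le\vol(M\setminus A)$, where $B=M\setminus U_{r'}(A)$. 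The annulus lemma applied to $B$ yields $\mu(B)\le\vol(U_r(B))+r\le\vol(U_{r'}(B))$, and a direct check shows that $U_{r'}(B)\subset M\setminus A$ for closed $A$, since any point in $U_{r'}(B)$ lies within $r'$ of some point at distance $\ge r'$ from $A$ and is hence at positive distance from $A$.

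The main obstacle is the annulus lemma, which is also the source of the exponent $1/n$: the annulus $U_{r'}(A)\setminus U_r(A)$ is only guaranteed to contain a single ball of radius $(r'-r)/2$, whose volume is at best of order $(r'-r)^n$, so making this exceed the additive error $r$ forces $r'-r$ of order $r^{1/n}$. Sharpness is illustrated by the case where $A$ is a single point and $\mu$ places a point mass of size $r$ there: covering this mass by $\vol$ requires a ball of radius $\gtrsim r^{1/n}$.
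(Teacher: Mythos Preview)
Your proof is correct and follows essentially the same route as the paper's: both find a ball of radius $(r'-r)/2$ inside the annulus $U_{r'}(A)\setminus U_r(A)$ (you via the intermediate value theorem applied to $x\mapsto d(x,A)$, the paper via a minimizing geodesic from a point outside $U_{r'}(A)$ to its nearest point in $\bar A$), use the volume lower bound to make this ball's volume exceed $r$, and handle the dual inequality by passing to complements. One trivial nitpick: since the Prokhorov infimum need not be attained, you should take $r>\pi(\mu,\vol)$ rather than $r=\pi(\mu,\vol)$ and pass to the limit at the end.
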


\begin{proof}
As already mentioned, the first inequality
trivially follows from the definitions.
To prove the second one, let $r>0$ be such that $\pi(\mu,\vol)<r$.
We are to prove that, for a suitable $r_1=C_\M r^{1/n}>r$ and every
measurable $A\subset M$, one has $\vol(U_{r_1}(A))\ge \mu(A)$
and $\mu(U_{r_1}(A))\ge \vol(A)$. If $U_{r_1}(A)=M$, these
inequalities follow from the assumption that $\mu(M)=\vol(M)$.
Suppose that $U_{r_1}(A)\ne M$ and choose a point
$p\in M\setminus U_{r_1}(A)$. Let $q$ be a point nearest to $p$
in the closure of $A$. 
Connect $p$ to $q$ by a minimizing geodesic and let $p_1$ be
a point on this geodesic such that $|p_1q|=(r_1+r)/2$.
The triangle inequality easily implies that the ball $B_{r_2}(p_1)$
of radius $r_2=(r_1-r)/2$ is contained in the set $U_{r_1}(A)\setminus U_r(A)$.
Therefore
$$
 \vol(U_{r_1}(A)) \ge \vol(U_r(A)) + \vol(B_{r_2}(p_1)) .
$$
Since $r>\pi(\mu,\vol)$, we have $\vol(U_r(A))\ge \mu(A)-r$.
Assuming that $r$ is sufficiently small, we have
$\vol(B_{r_2}(p_1))\sim \nu_n r_2^n > r$ since $r_2>C_\M r^{1/n}$.
Therefore $\vol(U_{r_1}(A))\ge \mu(A)$.
To prove that $\mu(U_{r_1}(A))\ge \vol(A)$, apply the same argument
to the set $M\setminus U_{r_1}(A)$ in place of $A$.
This yields that
$$
\mu(M\setminus U_{r_1}(A)) \le \vol(U_{r_1}(M\setminus U_{r_1}(A))) \le \vol(M\setminus A) ,
$$
where the second inequality follows from the fact that
$U_{r_1}(M\setminus U_{r_1}(A))\subset {M\setminus A}$.
Since $\mu(M)=\vol(M)$, this implies that
$\mu(U_{r_1}(A))\ge \vol(A)$.
Hence $\pi_0(\mu,\vol)\le r_1$ and the lemma follows.
\end{proof}

\begin{proposition}
\label{p:approx-prokhorov}
Let $X\subset M$ be a finite set and $\mu$ a measure supported on $X$
with $\mu(M)=\vol(M)$. Then
$(X,\mu)$ $\ep$-approximates $(M,\vol)$ if and only if $\pi_0(\mu,\vol)\le\ep$.
\end{proposition}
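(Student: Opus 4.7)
The plan is to prove the two implications separately, using Lemma \ref{l:marriage} to bridge the combinatorial Hall-type condition for $\ep$-approximation with the Prokhorov-type inequality defining $\pi_0$.

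For the forward implication, I will assume $(X,\mu)$ is an $\ep$-approximation realized by a partition $\{V_i\}_{i=1}^N$ with $V_i\subset B_\ep(x_i)$ and $\vol(V_i)=\mu_i$, and verify both halves of $\pi_0(\mu,\vol)\le\ep$ directly from this partition. Fix any measurable $A\subset M$. On one hand, summing over vertices $x_i\in A\cap X$, the sets $V_i$ are disjoint and each is contained in $B_\ep(x_i)$, hence in the $\ep$-neighborhood of $A$; this gives $\mu(A)=\sum_{x_i\in A}\vol(V_i)\le\vol(U_\ep(A))$. On the other hand, since $\{V_i\}$ partitions $M$ we may write $\vol(A)=\sum_i\vol(V_i\cap A)$, and whenever $V_i\cap A\ne\emptyset$ there is a point of $A$ within distance $\ep$ of $x_i$, so the contributing indices satisfy $x_i\in U_\ep(A)$, yielding $\vol(A)\le\sum_{i:\,x_i\in U_\ep(A)}\mu_i=\mu(U_\ep(A))$.

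For the reverse implication, I will assume $\pi_0(\mu,\vol)\le\ep$ and appeal to Lemma \ref{l:marriage}. The total-mass condition $\vol(M)=\mu(X)$ is given by hypothesis, so by that lemma it suffices to show $\mu(Y)\le\vol(U_\ep(Y))$ for every $Y\subset X$. Since $Y$ is a finite and thus measurable subset of $M$, applying the defining inequality of $\pi_0$ to the set $A=Y$ gives precisely this bound, and the conclusion follows.

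The one technical nuisance is the behavior of the inequalities exactly at the threshold $r=\ep$: because $\pi_0$ is defined as an infimum, $\pi_0(\mu,\vol)\le\ep$ strictly only delivers the required inequalities for every $r>\ep$. I expect this to be mild rather than a genuine obstacle, since on a Riemannian manifold $r\mapsto\vol(U_r(Y))$ is continuous (the measure of the distance sphere $\{d(\cdot,Y)=\ep\}$ vanishes for almost every $\ep$, and in any case $\vol(U_r(Y))\downarrow\vol\bigl(\bigcap_{r>\ep}U_r(Y)\bigr)$ as $r\downarrow\ep$), exactly the sort of continuity already exploited in the proof of Lemma \ref{l:marriage}. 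Taking $r\downarrow\ep$ therefore transfers the $r>\ep$ inequalities to $r=\ep$ without loss, at worst replacing the open $\ep$-neighborhood by the closed one, which is harmless because the partition condition $V_i\subset B_\ep(x_i)$ is stated with closed balls to begin with.
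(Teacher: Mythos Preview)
Your proof is correct and follows essentially the same approach as the paper: the reverse implication via Lemma~\ref{l:marriage} is identical, and for the forward implication you work directly with the partition $\{V_i\}$ where the paper instead routes through the (trivial) ``only if'' part of Lemma~\ref{l:marriage} and a complement argument, which amounts to the same thing. Your handling of the open-versus-closed $\ep$-neighborhood issue at the threshold matches the paper's parenthetical remark that the boundary of $U_\ep(Y)$ has zero volume.
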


\begin{remark}
\label{r:prokhorov}
This proposition together with Lemma \ref{l:prokhorov} implies that if 
$(X,\mu)$ $\ep$-approximates $(M,\vol)$ then $\pi(\mu,\vol)\le\ep$
and, conversely, if $\pi(\mu,\vol)\le c\ep^n$ 
(where $c=c(\M)$) then
$(X,\mu)$ $\ep$-approximates $(M,\vol)$.
\end{remark}

\begin{proof}[Proof of Proposition \ref{p:approx-prokhorov}]
First assume that $\pi_0(\mu,\vol)\le\ep$. Then, by the definition
of $\pi_0$, we have $\vol(U_\ep(Y))\ge \mu(Y)$ for every $Y\subset X$.
(Here we use the fact that the boundary of $U_\ep(Y)$
has zero volume.)
This and Lemma \ref{l:marriage} imply that $(X,\mu)$ $\ep$-approximates $(M,\vol)$.

Now assume that $(X,\mu)$ $\ep$-approximates $(M,\vol)$ and let
$A\subset M$ be a measurable set.
It suffices to prove that 
\be\label{e:approx-prokhorov1}
\vol(U_\ep(A))\ge \mu(A)
\ee
and
\be\label{e:approx-prokhorov2}
\mu(U_\ep(A))\ge \vol(A) .
\ee
To prove \eqref{e:approx-prokhorov1}, observe that
$$
 \vol(U_\ep(A))\ge \vol(U_\ep(A\cap X)) \ge \mu(A\cap X)=\mu(A)
$$
where the second inequality follows from Lemma \ref{l:marriage}.
To prove \eqref{e:approx-prokhorov2}, consider the set $Y=X\cap U_\ep(A)$.
By Lemma \ref{l:marriage} we have
$$
 \mu(X\setminus Y) \le \vol(U_\ep(X\setminus Y)) \le \vol(M\setminus A)
$$
where the second inequality follows from
the fact that $U_\ep(X\setminus Y)\subset M\setminus A$.
Since $\mu(X)=\vol(M)$, this implies that
$\mu(Y)\ge \vol(A)$ and \eqref{e:approx-prokhorov2} follows.
\end{proof}

\subsection*{Computing the weights}
We conclude this section by discussing how weights $\mu_i$
can be computed, for a given an $\ep$-net $X=\{x_i\}\subset M$.
There are several natural ways to associate a partition $\{V_i\}$
as in Definition \ref{d:vol-approx} to the $\ep$-net.
One is to let $\{V_i\}$ be the Voronoi decomposition of $M$
determined by~$X$.
Another possibility is to define
$V_i=B_\ep(x_i)\setminus\bigcup_{j<i} B_\ep(x_j)$.
However actual computation of the weights $\mu_i=\vol(V_i)$
is these constructions may be complicated.

A more practical approach could be the following. First split $M$ into
small subsets (of diameter at most $\ep'<\ep$) whose volumes are easy to compute.
To each of these subsets, associate one of the nearby points from~$X$.
Then the weight $\mu_i$ can be defined as
the sum of volumes of the subsets associated to the point $x_i$.
These weights define an $(\ep+\ep')$-approximation of volume.

For example, let $M\subset \R^n$ be a bounded region (rather than a closed manifold).
Then $M$ (except a small neighborhood of the boundary) can be divided
into small coordinate cubes. To each cube
one could associate the point of $X$ nearest to the cube's center.
The resulting weights are roughly equal to the volumes of Voronoi regions
but are easier to compute.

\begin{remark}
\label{r:weight}
It is an interesting problem how to derive the weights
from the distance matrix of $X$ without referring to the manifold $M$.
Ideally, one wants a nice, symmetric formula for $\mu_i$ in terms of
the distance matrix. We were not able to come up with such a formula.
However there is a straightforward algorithm based on the property that
a Riemannian metric is locally almost Euclidean.

Let $r\in (C\ep,\rho)$, $Kr^2\ll 1$. Then $r$-balls
in $M$ are bi-Lipschitz close to the $r$-ball in $\R^n$.
Moreover for each point $x_i\in M$
one can construct a bi-Lipschitz almost isometry
$\phi\co B_r(x_i)\to\R^n$ from distance functions of points
$x_j\in B_r(x_i)$.
For example, a function of the form $x\mapsto d(x,x_j)^2-d(x,x_i)^2$ is close
to a linear one in geodesic normal coordinates centered at $x_i$.
Using such functions as coordinates and post-composing with a suitable
linear transformation of $\R^n$ one gets a desired almost isometric map~$\phi$.
The image $\phi(X\cap B_r(X))\subset\R^n$ is easy to compute,
and then the problem essentially reduces to computation of volumes of Voronoi regions
(or differences of balls) in~$\R^n$.
\end{remark}

\end{document}